\definecolor{colore1}{RGB}{205,255,0}
\definecolor{colore2}{RGB}{255,255,0}
\definecolor{colore3}{RGB}{255,191,0}
\definecolor{coloreLinea1}{RGB}{0,0,255}
\definecolor{coloreLinea2}{RGB}{255,0,0}
\definecolor{coloreLinea3}{RGB}{148,0,105}
\tikzstyle{numeriLinea}=[black, pos=0.8, fill=white]
\definecolor{colore1}{RGB}{205,255,0}
\definecolor{colore2}{RGB}{255,255,0}
\definecolor{colore3}{RGB}{255,191,0}
\definecolor{coloreLinea1}{RGB}{0,0,255}
\definecolor{coloreLinea2}{RGB}{255,0,0}
\definecolor{coloreLinea3}{RGB}{148,0,105}
\theoremstyle{plain}
\newtheorem{lemma}{Lemma}[section]
\newtheorem{proposition}[lemma]{Proposition}
\newtheorem{theorem}[lemma]{Theorem}
\newtheorem*{claim}{Claim}
\theoremstyle{definition}
\theoremstyle{remark}
\newtheorem{remark}[lemma]{Remark}
\numberwithin{equation}{section}
\renewcommand{\nomgroup}[1]{%
 \ifthenelse{\equal{#1}{A}}{\item[\bfseries{General mathematical symbols}]}{%
 \ifthenelse{\equal{#1}{B}}{\item[]\item[\bfseries{Common Symbols for Systems of Conservation Laws}]}{{%
 \ifthenelse{\equal{#1}{C}}{\item[]\item[\bfseries{Symbols introduced in the present paper}]}{}}}}}
 \newcommand{\nomunit}[1]{\renewcommand{\nomentryend}{\hspace*{\fill}#1}}
\newcommand{\R}{\mathbb{R}}
\newcommand{\N}{\mathbb{N}}
\newcommand{\V}{\mathcal V}
\newcommand{\mcS}{\mathcal{S}}
\newcommand{\loc}{\text{\rm loc}}
\newcommand{\Ll}{\mathcal L}
\newcommand{\norm}[1]{\left\|#1\right\|}
\DeclareMathOperator{\TV}{\mathrm{TotVar}}
\DeclareMathOperator{\pt}{\partial_{\mathit{t}}}
\DeclareMathOperator{\px}{\partial_{\mathit{x}}}
\newcounter{stepnb}
\newcounter{substepnb}
\newcommand{\firststep}{\setcounter{stepnb}{0}}
\newcommand{\step}[1]{{{\sc \addtocounter{stepnb}{1}\noindent $\circleddash$ Step \arabic{stepnb}:} #1.}} 
\title[A counter-example to the regularity of systems]{Schaeffer's regularity theorem for scalar conservation laws does not extend to systems}
\author[L. Caravenna]{Laura Caravenna}
\address{L.C. Dipartimento di Matematica,
Universit\`a degli Studi di Padova,
Via Trieste 63, 35121 Padova, Italy}
\email{caravenna@math.unipd.it}
\author[L.~V.~Spinolo]{Laura V.~Spinolo}
\address{L.V.S. IMATI-CNR, via Ferrata 1, I-27100 Pavia, Italy.}
\email{spinolo@imati.cnr.it}
\begin{document}

\maketitle

{
\rightskip .85 cm
\leftskip .85 cm
\parindent 0 pt
\begin{footnotesize}

{\sc Abstract.}
Schaeffer's regularity theorem for scalar conservation laws can be loosely speaking formulated as follows. Assume that the flux is uniformly convex, then for a generic smooth initial datum the admissible solution is smooth outside a locally finite number of curves in
the $(t,x)$ plane. Here the term ``generic'' is to be interpreted in a suitable sense, related to the Baire Category Theorem. Whereas other regularity results valid for scalar conservation laws with convex fluxes have been extended to systems of conservation laws with genuinely nonlinear characteristic fields, in this work we exhibit an explicit counterexample which rules out the possibility of extending Schaeffer's Theorem. 
The analysis relies on careful interaction estimates and uses fine properties of the wave front-tracking approximation. 

\medskip\noindent
{\sc Keywords:} conservation laws, shock formation, regularity, Schaeffer Theorem. 

\medskip\noindent
{\sc MSC (2010):} 35L65 

\end{footnotesize}

}

\tableofcontents

\section{Introduction}
We are concerned with mild regularity properties for systems of conservation laws in one space dimension, namely equations in the form
\begin{equation}
\label{e:cl}
 \partial_t U + \partial_x \big[ G(U) \big] = 0.
\end{equation} 
In the previous expression, the unknown $U$ attains values in $\R^N$ and depends on ${(t, x) \in [0, + \infty[ \times \R}$. The flux function $G: \R^N \to \R^N$
 is of class $C^2$. If $N=1$, we call~\eqref{e:cl} \emph{scalar conservation law}. In 1973, Schaeffer~\cite{Schaeffer} established a regularity result (see Theorem~\ref{T:Schaeffer} below) that applies to scalar conservation laws. This paper aims at showing that this result does not extend to the case of systems, i.e.~to the case when $N>1$.

When $N>1$, system~\eqref{e:cl} is called \emph{strictly hyperbolic} if the Jacobian matrix $DG(U)$ admits $N$ real and distinct eigenvalues 
\begin{equation}
\label{e:sh}
 \lambda_1(U) < \dots < \lambda_N(U). 
\end{equation}
We term $\vec r_1 (U), \dots, \vec r_N (U)$ the corresponding right eigenvectors of $DG(U)$ and we say that the $i$-th characteristic field is \emph{genuinely nonlinear} 
if 
\begin{equation}
\label{e:gnlgenerale}
 \nabla \lambda_i (U) \cdot \vec r_i (U) \ge c >0, \quad \text{for every $U \in \R^N$}
\end{equation}
and for some suitable constant $c>0$. 
In the previous expression, $\cdot$ denotes the standard scalar product in $\R^N$. If the left hand side of~\eqref{e:gnlgenerale} is identically zero, then the $i$-th characteristic field is termed \emph{linearly degenerate}. 

In the present paper we deal with the Cauchy problem posed by coupling~\eqref{e:cl} with the initial datum 
 \begin{equation}
\label{e:cau}
 U(0, \cdot) = U_0
\end{equation}
and we refer to the books by Dafermos~\cite{Dafermos} and Serre~\cite{Serre} for a comprehensive introduction to systems of conservation laws. In particular, it is well-known that, even if $U_0$ is smooth and~\eqref{e:cl} is a scalar conservation law, the classical solution of~\eqref{e:cl},~\eqref{e:cau} breaks down in finite time owing to the formation of discontinuities. The Cauchy problem~\eqref{e:cl},~\eqref{e:cau} can be interpreted in the sense of distributions, but in general distributional solutions fail to be unique. In the attempt at restoring uniqueness, various \emph{admissibility conditions} have been introduced: we refer again to~\cite{Dafermos,Serre} for an overview.

In the following we briefly go over some well-posedness and regularity results for systems of conservation laws. We firstly focus on the scalar case $N=1$. The celebrated work by Kru{\v{z}}kov~\cite{Kru} establishes global existence and uniqueness results in the class of so-called \emph{entropy admissible} solutions of the Cauchy problem~\eqref{e:cl},~\eqref{e:cau} under the assumption that $U_0 \in L^\infty$. Regularity properties of entropy admissible solutions have been investigated in several papers: here we only mention some of the main contributions and we refer to~\cite{Dafermos, Serre} for a more complete discussion. First, a famous result by Ole{\u\i}nik~\cite{Oleinik} establishes the following regularization effect: when the flux $G\in C^{2}$ is uniformly convex,  for every $t>0$ the solution $U(t, \cdot)$ has bounded total variation, namely $U(t, \cdot) \in BV (\R)$, even if $U_0$ is only in $L^\infty$. More recently, Ambrosio and De Lellis~\cite{AmbrosioDeLellis} improved
Oleinik's result showing that, except at most countably many times, the
solution $U(t, \cdot)$ is actually a \emph{special function of bounded
variation}, namely $U(t, \cdot) \in SBV (\R)$; we refer to~\cite[\S~4]{AFP}
for the definition of $SBV (\R)$. This is a regularizing effect of the
nonlinearity. A result due to Schaeffer~\cite{Schaeffer}, moreover, states
that for a \emph{generic} smooth initial datum the admissible solution of the
Cauchy problem is even better than this: it develops at most a  locally finite
number of discontinuity curves, see Theorem~\ref{T:Schaeffer} below. In the following statement, we denote by $\mcS(\R)$ the Schwartz space of rapidly decreasing functions, endowed with the standard topology (see~\cite[p.133]{ReedSimon} for the precise definition). 
\begin{theorem}[Schaeffer]
\label{T:Schaeffer}
Assume that $N=1$ and that the flux $G$ is smooth and uniformly convex, namely $G'' (U)\ge c >0$ for some constant $c>0$ and for every $U \in \R$.

Then there is a set $\mathfrak F \subseteq \mcS(\R)$ that enjoys the following properties:
\begin{enumerate}
\item $\mathfrak F$ is of the first category in $\mcS(\R)$, namely
\begin{equation}
\label{e:firstc}
 \mathfrak F = \bigcup_{k=1}^\infty C_k, \quad \text{$C_k$ is closed and has empty interior, for every $k$}. 
\end{equation}
\item For every $U_0 \in \mcS(\R) \setminus \mathfrak F$, the entropy admissible solution of the Cauchy problem~\eqref{e:cl},~\eqref{e:cau}
enjoys the following regularity. For every open bounded set $\Omega \subseteq [0, + \infty[ \times \R $ there is a finite number of Lipschitz continuous curves $\Gamma_1, \dots, \Gamma_m \subseteq \R^2$ such that 
$$
 U \in C^{\infty} \left( \Omega \setminus \cup_{i=1}^m \Gamma_i \right) 
$$
\end{enumerate}
\end{theorem}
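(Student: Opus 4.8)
My plan is to base the proof on the Lax--Oleinik representation of the entropy solution and to obtain the genericity through a Thom-transversality argument combined with a Baire-category construction.

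\emph{Step 1: reduction to a variational problem.} Since $G\in C^\infty$ is uniformly convex, its Legendre transform $L:=G^\ast$ is smooth, and the unique entropy admissible solution of~\eqref{e:cl},~\eqref{e:cau} is $u=\partial_x w$ with
\[
 w(t,x)=\min_{y\in\R}\Phi(t,x,y),\qquad \Phi(t,x,y):=t\,L\!\left(\tfrac{x-y}{t}\right)+\int_0^y u_0(z)\,dz .
\]
Because $u_0\in\mcS(\R)$ the primitive $\int_0^{\,\cdot}u_0$ is bounded while $tL((x-y)/t)\to+\infty$ as $|y|\to\infty$, so $\Phi(t,x,\cdot)$ is coercive and its minimizers lie in a compact $y$-interval, locally uniformly in $(t,x)$; a minimizer $\bar y$ satisfies $\bar x=\bar y+\bar t\,G'(u_0(\bar y))$ and yields $u(\bar t,\bar x)=u_0(\bar y)$. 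If at $(\bar t,\bar x)$ this minimizer is unique and \emph{nondegenerate}, i.e.\ $\partial_y^2\Phi(\bar t,\bar x,\bar y)=\tfrac1{\bar t}L''((\bar x-\bar y)/\bar t)+u_0'(\bar y)>0$, the implicit function theorem produces a $C^\infty$ map $y(t,x)$ near $(\bar t,\bar x)$, hence $u$ is $C^\infty$ there. Consequently $u$ can fail to be smooth only on
\[
 \Sigma:=\bigl\{(t,x): \Phi(t,x,\cdot)\ \text{has more than one minimizer, or a degenerate minimizer}\bigr\},
\]
and the theorem reduces to: for generic $u_0$, $\Sigma\cap\Omega$ is contained in finitely many Lipschitz curves for every bounded open $\Omega$.

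\emph{Step 2: the geometry of $\Sigma$.} Consider the critical surface $\mathcal M:=\{(t,x,y):\partial_y\Phi=0\}=\{x=y+t\,G'(u_0(y))\}$, a smooth graph over $(t,y)$ (here $\partial_x\partial_y\Phi\neq0$), and the projection $\pi(t,x,y)=(t,x)$, whose fold locus is $\mathcal M_0:=\mathcal M\cap\{\partial_y^2\Phi=0\}$. One has $\Sigma=\mathcal K\cup\pi\bigl(\{(t,x,y)\in\mathcal M_0: y\ \text{is a global minimizer}\}\bigr)$, where $\mathcal K$, the \emph{shock set}, is the closure of $\{(t,x):\Phi(t,x,\cdot)\ \text{has two distinct global minimizers}\ y^-<y^+\}$. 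Along $\mathcal K$ the traces $u^\pm:=u_0(y^\pm)$ obey the Rankine--Hugoniot relation $\dot x=[G(u^+)-G(u^-)]/(u^+-u^-)$, which is bounded on bounded sets, so the arcs of $\mathcal K$ are Lipschitz (indeed $C^1$). Catastrophe theory for the two-parameter family $\{\Phi(t,x,\cdot)\}$ then yields the expected local picture: shocks are \emph{born} at cusps of the caustic $\pi(\mathcal M_0)$, i.e.\ where a critical point satisfies $\partial_y^2\Phi=\partial_y^3\Phi=0\neq\partial_y^4\Phi$; they \emph{interact} where three global minimizers coincide; and away from these events the shock curves and the relevant caustic arcs are smooth.

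\emph{Step 3: genericity and the Baire argument.} The plan is to let $\mathfrak F$ be the set of data that, somewhere inside a ball, violate a finite list of transversality conditions on the $y$-jet of $\Phi$ --- morally: (a) on $\mathcal M$ only folds and cusps of $\pi$ occur, so $\partial_y^3\Phi$ vanishes on $\mathcal M_0$ only at isolated points, where $\partial_y^4\Phi\neq0$ (no $A_4$); (b) the loci in $(t,x)$ carrying two, resp.\ three, coincident global minimizers are, resp., curves and isolated points, meeting transversally. By Thom's parametric transversality theorem each such condition is generic; moreover, for each $k\in\N$ the set $\mathcal G_k\subseteq\mcS(\R)$ of data satisfying all of them throughout the closed ball $\overline{B_k}\subseteq[0,+\infty[\,\times\R$ is \emph{open} --- by compactness of $\overline{B_k}$ together with the continuous dependence of $\mathcal M$, of its fold locus, and of the minimizers on $u_0$ in the Schwartz topology --- and \emph{dense}, since the perturbations furnished by transversality can be chosen in $\mcS(\R)$ and arbitrarily small. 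Hence $C_k:=\mcS(\R)\setminus\mathcal G_k$ is closed with empty interior, and $\mathfrak F:=\bigcup_{k\in\N}C_k$ satisfies~\eqref{e:firstc}.

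\emph{Step 4: conclusion, and the main obstacle.} Fix $u_0\notin\mathfrak F$ and a bounded open $\Omega$, with $\overline\Omega\subseteq\overline{B_k}$ for some $k$. Inside $\overline{B_k}$ the exceptional points --- shock births (cusps of the caustic) and shock interactions (triple-minimizer points) --- are isolated, hence finite in number by compactness; between consecutive ones the shock set $\mathcal K$ and the finitely many relevant caustic arcs are $C^1$. Thus $\Sigma\cap\Omega$ is covered by finitely many Lipschitz curves, off which $u$ is $C^\infty$ by Step~1, which is the assertion. The crux --- and the only point that is not soft --- is precisely the \emph{finiteness in bounded regions}: one must arrange the genericity conditions so that no accumulation of shock births or interactions can occur in $\overline{B_k}$, the point being that any accumulation point would itself be a higher-codimension degeneracy of $\Phi$ (several $y$-derivatives vanishing at once, or a non-transverse coincidence of minimizers) --- exactly what transversality on the compact set $\overline{B_k}$ excludes. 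Making this rigorous is delicate because the global minimizer is not a smooth object; the standard remedy is to transfer every condition to the smooth finite-dimensional data $\mathcal M$ and the $y$-jet of $\Phi$ (where Thom's theorem genuinely applies) and to control the finitely many branches of $\mathcal K$ via the Rankine--Hugoniot ODE. This bookkeeping is the technical heart of the argument.
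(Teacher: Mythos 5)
First, a point of comparison: this paper does not prove Theorem~\ref{T:Schaeffer} at all --- it is quoted from Schaeffer's 1973 paper \cite{Schaeffer} (with the characterization of $\mathfrak F$ in \cite{TadmorTassa}), and the paper's own contribution is the counterexample for systems. So your proposal can only be measured against the classical argument, and in fact it follows that classical route: the Lax--Oleinik variational representation, smoothness of $u$ wherever the global minimizer is unique and nondegenerate, and genericity of a finite list of transversality conditions on the family $\Phi(t,x,\cdot)$. Your Steps~1 and~2 are correct and standard.

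As a proof, however, the proposal has a genuine gap, which you partly acknowledge: all of the content of the theorem sits in Steps~3--4, and there it is asserted rather than established. Concretely: (a) the conditions in Step~3(b) concern \emph{global} minimizers of $\Phi(t,x,\cdot)$; these are not jet conditions at a point, so Thom's parametric transversality theorem does not apply to them as stated, and reducing them to transversality statements about the smooth manifold $\mathcal M$ and the $y$-jet of $\Phi$ is precisely the work to be done, not an input. (b) Openness of $\mathcal G_k$ needs quantitative stability (uniform nondegeneracy of the $A_2$/$A_3$ points and uniform transversality over the compact $\overline{B_k}$, plus stability of the location of global minimizers under small Schwartz perturbations), and density needs an explicit finite-dimensional family of Schwartz perturbations along which the relevant jet map is submersive; neither is carried out, and since $\mcS(\R)$ is infinite-dimensional one cannot simply invoke the finite-dimensional transversality theorem. (c) Most importantly, the finiteness of singular points in $\overline{B_k}$ --- no accumulation of shock births or interactions, and only finitely many shock branches meeting $\Omega$ --- is exactly what distinguishes generic data from Schaeffer's own counterexample with infinitely many shocks in a compact set, and your Step~4 only names this issue (``the technical heart'') without resolving it; isolatedness of cusps and triple points must itself be derived from the genericity conditions, and one must in addition exclude accumulation of degenerate critical points of $y\mapsto y+tG'(u_0(y))$ relevant to $\overline{B_k}$. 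So the proposal is a correct roadmap that matches the known proof strategy, but the essential technical content of Schaeffer's theorem is missing from it.
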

The curves $\Gamma_1, \dots, \Gamma_m$ are usually termed \emph{shocks}. We briefly comment the above result. First, the assumption that $G$ is uniformly convex can be relaxed, see for instance  Dafermos~\cite{Daf}. See also~\cite{TWZ} for recent related results. Second, a characterization of the set $\mathfrak F$ can be found in a paper by Tadmor and Tassa~\cite{TadmorTassa}. Third, the result is sharp in the sense that one cannot hope that the regularity holds for \emph{every} smooth initial datum. More precisely, even in the case $G(U) = U^2 /2$ several authors constructed initial data in $\mcS (\R)$ that develop infinitely many shocks on compact sets; see for instance the counter-example exhibited by Schaeffer himself~\cite[\S~5]{Schaeffer}. Among recent works, we mention  the construction by Adimurthi, Ghoshal and Veerappa Gowda~\cite{Adimurthi}.

The present paper aims at discussing whether or not Schaeffer's Theorem~\ref{T:Schaeffer} extends to systems, i.e.~to the case when~$N>1$. 
Investigating whether or not the number of shocks is (generically) finite is motivated not only by intrinsic interest, but also by applications. In particular, knowing that the limit solution admits at most finitely many shocks simplifies the study of several approximation schemes. As an example, we recall that the proof of the convergence of the vanishing viscosity approximation in the case when the limit solution has finitely many, non interacting shocks was provided by Goodman and Xin~\cite{GoodmanXin} and it is considerably simpler than the proof in the general case, which is due to Bianchini and Bressan~\cite{BiaBre}. 

We now recall some well-posedness and regularity results for systems of conservation laws. The pioneering work by Glimm~\cite{Glimm} established existence of a global in time, distributional solutions of the Cauchy problem~\eqref{e:cl},~\eqref{e:cau} under the assumptions that the system is strictly hyperbolic, that each characteristic field is either genuinely nonlinear or linearly degenerate and that the total variation of the initial datum $U_0$ is sufficiently small. Uniqueness results were obtained in a series of papers by Bressan and several collaborators: we refer to the book~\cite{Bre} for an overview. In the following, we call the solution constructed by Glimm \emph{admissible solution} of the Cauchy problem~\eqref{e:cl},~\eqref{e:cau}. Note that this solution can be also recovered as the limit of a wave front-tracking approximation~\cite{Bre} and of a second order approximation~\cite{BiaBre}. 

Several regularity results that apply to scalar conservation laws with convex fluxes have been extended to systems of conservation laws where every vector field is genuinely nonlinear (i.e.~condition~\eqref{e:gnlgenerale} holds for every $i=1, \dots, N$). See, for instance, the works by Glimm and Lax~\cite{GlimmLax}, Liu~\cite{Liu_decay} and Bressan and Colombo~\cite{BressanColombo} for possible extensions of the decay estimate by Ole{\u\i}nik~\cite{Oleinik}. Moreover, the $SBV$ regularity result by Ambrosio and De Lellis~\cite{AmbrosioDeLellis} has been extended to the case of systems, see Dafermos~\cite{Dafermos_sbv} for self-similar solutions, Ancona and Nguyen~\cite{AnconaNguyen} for Temple systems and Bianchini and Caravenna~\cite{BiaCar} for general systems where every characteristic field is genuinely nonlinear. 

The main result of the present paper states that, contrary to 
the results by Ole{\u\i}nik~\cite{Oleinik} and Ambrosio and De Lellis~\cite{AmbrosioDeLellis}, 
 Schaeffer's Theorem~\ref{T:Schaeffer} does not extend to systems. 
\begin{theorem}
\label{T:main} There are a flux function $G: \R^3 \to \R^3$, a compact set $K \subseteq [0, + \infty[ \times \R$ and a set $ \mathfrak B \subseteq \mathcal S (\R)$ that enjoy the following properties:
\begin{enumerate}
\item \label{item:1main}system~\eqref{e:cl} is strictly hyperbolic and every characteristic field is genuinely nonlinear, namely~\eqref{e:sh} holds and moreover property~\eqref{e:gnlgenerale} is satisfied for every $i=1, 2, 3$. 
\item \label{item:2main} The set $ \mathfrak B$ is non empty and open in $\mcS(\R)$. 
\item \label{item:3main} For every $U_0 \in \mathfrak B$ the admissible solution of the Cauchy problem~\eqref{e:cl},~\eqref{e:cau} has infinitely many shocks  in the compact set $K$
\end{enumerate}
\end{theorem}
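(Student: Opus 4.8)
The plan is to construct the flux $G$ and the initial datum explicitly, exploiting the fact that in a genuinely nonlinear system, unlike the scalar case, compression waves of one family can repeatedly interact with waves of another family, and each such interaction can spawn a new shock of the first family that does not immediately merge with the others. The scalar Schaeffer mechanism works because, after the generic initial datum develops a shock, characteristics of the single family feed monotonically into it and no new shocks are created from smooth regions. For systems I would engineer a \emph{self-sustaining interaction pattern}: a $1$-shock and a $3$-rarefaction (say) are placed so that the $3$-waves cross the $1$-shock, and each crossing produces a small $1$-wave of compressive type which focuses into a new $1$-shock before being overtaken; since the $3$-rarefaction is spread over an interval, infinitely many such crossings occur in bounded space-time, yielding infinitely many $1$-shocks in a compact set $K$. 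The $3$ equations (rather than $2$) give the extra room to decouple the "carrier" fields from the "shock-producing" field and to arrange that the interaction coefficients have the right sign.

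The key steps, in order, would be: (i) \emph{Design of $G$}: write a strictly hyperbolic, genuinely nonlinear $3\times 3$ flux — most cleanly by prescribing eigenvalues $\lambda_1<\lambda_2<\lambda_3$ and eigenvectors directly, perhaps starting from a diagonal/decoupled base system and adding a carefully chosen coupling term so that the Lie brackets / interaction coefficients $c_{i,j}$ between families $3$ and $1$ are nonzero with a fixed sign, while keeping $\nabla\lambda_i\cdot\vec r_i$ bounded below; verify \eqref{e:sh} and \eqref{e:gnlgenerale}. (ii) \emph{Construction of a reference solution}: build, via wave front-tracking, a solution consisting of a single large $1$-shock and a fan of $3$-rarefaction fronts arranged so that each $3$-front, upon hitting the $1$-shock, emits a new $1$-front; track these fronts and show the emitted $1$-fronts coalesce into genuinely new $1$-shocks (not absorbed into the original one) accumulating in $K$. (iii) \emph{Interaction and decay estimates}: control the total strength of secondary waves (the reflected $2$- and $3$-waves produced at each interaction) using the standard Glimm interaction potential, showing the pattern is stable and the construction does not blow up — this is where "careful interaction estimates" of the abstract enter. (iv) \emph{Openness of $\mathfrak B$}: show that the qualitative feature "infinitely many $1$-shocks in $K$" is stable under small perturbations of $U_0$ in $\mcS(\R)$, using $L^1$-continuous dependence of the admissible solution on the datum (Bressan's theory) together with the fact that each of the first $n$ shock-producing interactions is \emph{transversal} and hence survives a small perturbation; letting $n\to\infty$ and taking $\mathfrak B$ to be a suitable (nonempty, open) neighborhood of the reference datum, intersected over the finitely-many robustness conditions needed at each scale, gives an open set all of whose elements still produce infinitely many shocks in $K$.

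The main obstacle I expect is step (ii)–(iii): ensuring that the shock-producing interactions genuinely \emph{accumulate} in a compact set rather than either (a) having the new $1$-shocks immediately swallowed by the main $1$-shock — which is what the scalar monotonicity would force — or (b) having the interaction points escape to infinity or to $t=\infty$. Preventing (a) requires the geometry of the wave fan to be such that each newly created $1$-shock travels, for a while, on the \emph{opposite} side of (or parallel to) the main shock before any merging, which is a delicate quantitative constraint on the eigenvalue gaps and the wave strengths; preventing (b) requires the $3$-rarefaction to be supported on a bounded interval while still containing infinitely many interaction-triggering fronts in the front-tracking scheme, so one must be careful about how the front-tracking approximations are refined and about what survives in the limit. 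Making the whole configuration robust (step iv) compounds this, since one needs the accumulation to be structurally stable, not just to hold for one carefully tuned datum; the resolution is to isolate finitely many "transversality of $i$-$j$ interaction" conditions governing each stage and check that each is an open condition, then use a diagonal/nested-neighborhood argument to produce the open set $\mathfrak B$.
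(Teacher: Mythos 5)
Your overall strategy (an explicit genuinely nonlinear $3\times3$ flux, a front-tracking analysis of a hand-built wave pattern, and a stability argument to get an open set of data) is in the spirit of the paper, but two of your key steps have genuine gaps. First, the shock-production mechanism. You propose a single large $1$-shock traversed by a $3$-rarefaction fan, claiming that ``infinitely many crossings'' occur because the rarefaction is spread over an interval. In the exact solution a rarefaction crossing a shock is \emph{one} continuous interaction, not infinitely many discrete ones; the infinitely many fronts only exist in the front-tracking discretization, where for fixed $\nu$ there are finitely many of them, each reflecting a wave whose individual strength vanishes as $\nu\to0^+$. In the limit these reflected waves form a continuous (or at best finitely focusing) wave, so your construction does not obviously yield infinitely many \emph{distinct} shock curves in a compact set. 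The paper avoids this by using the Baiti--Jenssen system and a genuinely discrete cascade: two approaching $2$-shocks with $1$- and $3$-shocks bouncing between them, where each $1$-$2$ or $2$-$3$ interaction produces a new shock with a quantified strength (Lemma~\ref{L:ie}, estimate~\eqref{e:noraref}), giving a geometric sequence of distinct shocks whose survival in the limit is extracted via the Bressan--LeFloch $\theta$-shock-front analysis. The special structure of the Baiti--Jenssen flux (the $1$- and $3$-wave curves are straight lines in planes $v=\mathrm{const}$, so $1$-$1$, $3$-$3$ and $1$-$3$ interactions create no new waves, and the $v$-component solves Burgers) is what keeps this bookkeeping manageable; a generic coupling chosen only through sign conditions on interaction coefficients would not give you these exact cancellations.

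Second, the openness argument. You invoke $L^1$-continuous dependence plus transversality of the first $n$ interactions and then ``let $n\to\infty$'' with a nested-neighborhood argument. This does not work as stated: $L^1$-stability gives no control on the number or location of shocks (two $L^1$-close solutions can have completely different discontinuity sets), and an intersection of infinitely many open conditions is in general not open, while any fixed neighborhood must beat perturbations that are eventually \emph{larger} than the (geometrically decaying) shocks near the accumulation point --- exactly the obstruction the paper singles out in \S~\ref{ss:tildeu}. The paper's resolution is structural rather than perturbative: it adds to the datum the function $\Psi$, monotone in the directions of the eigenvectors, so that for \emph{every} datum in a fixed $W^{1,\infty}$-ball the front-tracking approximation emits only shocks at $t=0$ (Lemmas~\ref{l:wp1}--\ref{l:wp1tris}), then proves uniform interaction estimates (Lemma~\ref{l:quantitative}) and uniform lower bounds on the strengths of the six ``big shocks'' and of the reflected cascade (Lemmas~\ref{l:collapse},~\ref{l:collapse2},~\ref{l:almeno}) valid for all data in the ball, and only then passes to the limit. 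Openness in $\mathcal S(\R)$ is then immediate because the ball is $W^{1,\infty}$-open around a mollified center $\widetilde U_\varsigma\in\mathcal S(\R)$ and the Schwartz topology is stronger. Without an analogue of the $\Psi$-monotonicity device and uniform-in-datum interaction estimates, your step (iv) does not produce an open set $\mathfrak B$.
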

Some remarks are in order:
\begin{itemize}
\item in the statement of the above theorem by \emph{shock} we mean a  Lipschitz continuous curve $x = \Gamma (t)$ at which $U$ is discontinuous. 
\item The Baire Theorem implies that any set of the first category~\eqref{e:firstc} has empty interior. Since the set of ``bad data'' $\mathfrak B$ is open and non empty, it cannot be of the first category and hence Theorem~\ref{T:main} provides a counter-example to the possibility of extending Schaeffer's Theorem to the case of systems. 
\item By looking at the explicit construction one can infer that $\mathfrak B$ satisfies the following further requirement. For every $U_0 \in \mathfrak B$, the total variation of $U_0$ is sufficiently small to apply the existence and uniqueness results in~\cite{Bre,Glimm}. This means that the counter-example provided by Theorem~\ref{T:main} belongs to the same class where we have well-posedness. \item Our construction is explicit, in the sense that we provide an explicit formula for the function $G$, the compact $K$ and the set $\mathfrak B$, see~\eqref{e:pertSyst}, \eqref{E:secondapalla} and the construction in~\S~\ref{S:step1}. 
\item Our construction shows, as a byproduct, that a wave-pattern containing infinitely many shocks can be robust with respect to suitable 
perturbations of the initial data. 
\item Our counter-example requires 3 dimensions, namely $N=3$. It is known that $2 \times 2$ systems are usually much better behaved than higher dimension systems, see for instance the discussion in~\cite[\S~XI\!I]{Dafermos}. An interesting question that is still to be addressed is whether or not Scheffer's Theorem extends to (suitable classes of) $2 \times 2$ systems~\footnote{We thank Alberto Bressan for this remark.}. 
\end{itemize}
To conclude, we briefly outline the proof of Theorem~\ref{T:main}.
The set $\mathfrak B$ will be basically obtained by considering small $W^{1, \infty}$ perturbations of a certain function $\widetilde U$. The main point in the proof is then constructing $G$ and $\widetilde U$ in such a way that
\begin{enumerate}
\item when $U_0 =\widetilde U$  the admissible solution of the Cauchy problem~\eqref{e:cl}-\eqref{e:cau} develops infinitely many shocks, and
\item the same happens when $U_0$ is a small perturbation of $\widetilde U$. 
\end{enumerate}
We choose as flux function $G$ a particular representative of a family of fluxes introduced by Baiti and Jenssen~\cite{BaJ}. Note that in~\cite{BaJ} the authors exhibit a wave-pattern containing infinitely many shocks. Actually, the original wave-pattern in~\cite{BaJ} contains large amplitude waves, but the construction can be adapted to obtain a wave-pattern with small total variation. Although we use several results established in~\cite{BaJ}, our analysis is quite different from the one in~\cite{BaJ}. 
More precisely, there are three main challenges in adapting the construction in~\cite{BaJ} for our goals :
\begin{itemize}
\item we need to show that the wave-pattern in~\cite{BaJ} can be exhibited by a solution with smooth initial datum: this issue is tackled by relying on the notion of \emph{compression wave}, see \S~\ref{sss:cw}. 
\item A much more severe obstruction is the fact that the wave-pattern in~\cite{BaJ} is a priori \emph{not robust} with respect to perturbations. We refer to the discussion at the beginning of \S~\ref{ss:tildeu} for a more detailed explanation. Here we just point out that, owing to this lack of robustness, we have to introduce a more complicated construction than the original one in~\cite{BaJ}. Even in the case when the initial datum is exactly $\widetilde U$, the structure of the admissible solution is much more complex than the one considered in~\cite{BaJ}. 
\item The analysis in~\cite{BaJ} relies on the construction of \emph{explicit} solutions. In our case, computing explicit solutions is prohibitive and hence we argue by introducing a wave front-tracking approximation. We perform careful interaction estimates to gain precise information on the structure of the approximate solution and we eventually pass to the limit by using 
fine properties of the wave front-tracking approximation established by Bressan and LeFloch~\cite{BressanLeFloch}. 
\end{itemize}
The paper is organized as follows. 
In \S~\ref{s:overview} for the reader's convenience we go over some previous results. More precisely, in~\S~\ref{ss:wft} we recall some of the main properties of the wave front-tracking approximation, while in~\S~\ref{SS:bjcon} we introduce the Baiti-Jenssen system and recall some of the main properties. In \S~\ref{SS:bjin} we establish preliminary estimates on admissible solutions of the Baiti-Jenssen system. In~\S~\ref{S:step1} we construct the function $\widetilde U$. In~\S~\ref{s:pprop} we establish the proof of Theorem~\ref{T:main}. In particular, we  show that the solution of the Cauchy problem with initial datum $\widetilde U$ develops infinitely many shocks and that this behavior is robust with respect to perturbations of $\widetilde U$.  

For the reader's convenience, we collect the notation of this paper at Page~\pageref{notations}.
 
\nomenclature[A]{$W^{1,\infty}$:}{the space of Lipschitz continuous functions\nomunit{}}
\nomenclature[A]{$\mcS(\R)$:}{the Schwartz space of rapidly decreasing functions, endowed with the standard topology (see for instance~\cite[p.133]{ReedSimon} for the precise definition)\nomunit{}}
\nomenclature[A]{$\vec z_1 \cdot \vec z_2:$}{the Euclidian scalar product between the vectors $\vec z_1, \; \vec z_2 \in \R^N$\nomunit{}}
\nomenclature[A]{$\norm{\cdot}_{W^{1 \infty}}$:}{the standard norm in the Sobolev space $W^{1 \infty}$\nomunit{}}
\nomenclature[A]{$\TV U$:}{the total variation of the function $U: \R \to \R^N$, see~\cite[\S~3.2]{AFP} for the precise definition\nomunit{}}
\nomenclature[A]{$\Ll^N$:}{the Lebesgue measure on $\R^N$\nomunit{}}
\nomenclature[A]{a.e. $x$:}{for $\Ll^1$-almost every $x$\nomunit{}}
\nomenclature[A]{a.e. $(t, x)$:}{for $\Ll^2$-almost every $(t,x)$\nomunit{}}
\nomenclature[A]{$F'$:}{the first derivative of the differentiable  function $F: \R \to \R^N$\nomunit{}}
\nomenclature[A]{$F(x^\pm)$:}{the left and right limit of the function $F$ at $x$ (whenever they exist)\nomunit{}}
\nomenclature[C]{$u,w,v$:}{the first, second and third component of the vector-valued function $U$\nomunit{See \S~\ref{sss:baj1}}}
\nomenclature[C]{$\eta$:}{the perturbation parameter in the flux function $F_\eta$ \nomunit{See~\eqref{e:pertSyst},~\eqref{e:c:parametri2}}}
\nomenclature[C]{$r$:}{a strictly positive parameter \nomunit{See~\eqref{e:c:parametri3bis},~\eqref{e:parameters},~\eqref{e:palla}}}
\nomenclature[C]{$\lambda_i(U)$:}{the $i$-th eigenvalue of the Jacobian matrix $JF_{\eta}$ \nomunit{See~\eqref{e:eigenvaluese}}}
\nomenclature[C]{$\widetilde U$, $\widetilde U_{\varsigma}$:}{\color{purple}the function $\widetilde U: \R \to \R^3$ and its mollification \nomunit{See~\S~\ref{sss:tildeu},~\eqref{e:mollificazione},~\eqref{E:secondapalla}}}
\nomenclature[C]{$\vec r_i(U)$:}{the $i$-th right eigenvector of the Jacobian matrix $JF_{\eta}$\nomunit{See~\S~\ref{sss:baj2}}}
\nomenclature[C]{$\nu$, $h_{\nu}$:}{parameter and mesh size for the wave front-tracking approximation \nomunit{See~\S~\ref{sss:mesh}}}
\nomenclature[C]{$x^\nu_{i}$:}{mesh points for the wave front-tracking approximation \nomunit{See~\eqref{e:meshsize}}}
\nomenclature[C]{$U^\nu$:}{wave front-tracking approximation of the admissible solution $U$ \nomunit{See~\S~\ref{ss:wft}}}
\nomenclature[C]{$U^\nu_0$:}{wave front-tracking approximation of the initial datum $U_{0}$ \nomunit{See~\S~\ref{ss:wft}}}
\nomenclature[C]{$\mu_\nu$:}{the threshold for using the accurate Riemann solver \nomunit{See~\S~\ref{ss:wft}}}
\nomenclature[C]{$V$:}{the Lipschitz continuous function $V: \R \to \R^3$ \nomunit{See~\eqref{e:VI}}}
\nomenclature[C]{$W$:}{the piecewise constant function $W: \R \to \R^3$ \nomunit{See~\eqref{e:W}}}
\nomenclature[C]{$\Psi$:}{the function $\Psi: \R \to \R^3$ \nomunit{See~\S\S~\ref{sss:psi},~\ref{sss:tildeu}}}
\nomenclature[C]{$\underline U'$, $\underline U''$, $\underline U^\ast$ and $\underline U^{\ast \ast}$:}{fixed states in $\R^3$ \nomunit{See~\eqref{e:statedef}}}
\nomenclature[C]{$\widetilde T$:}{the strictly positive interaction time \nomunit{See~\eqref{e:T},~\eqref{e:c:parametriT}}}
\nomenclature[C]{$q$:}{a strictly positive parameter that we fix equal to $20$ \nomunit{See Lemma~\ref{l:infty},~Remark~\ref{rem:qrho}}}
\nomenclature[C]{$\mathfrak q$, $\mathfrak p$:}{strictly positive parameters}  \nomunit{See~\eqref{e:qs}}
\nomenclature[C]{$\omega$:}{a strictly positive parameter \nomunit{See~\eqref{e:uduetre},~\eqref{e:c:parametri2}}}
\nomenclature[C]{$\zeta_w$:}{a strictly positive parameter \nomunit{See~\eqref{e:psi},~\eqref{e:c:parametribis},~\eqref{e:c:parametri5}}}
\nomenclature[C]{$\zeta_c$:}{a strictly positive parameter \nomunit{See~\eqref{e:psi},~\eqref{e:c:parametri3bis},~\eqref{e:c:parametri5}}}
\nomenclature[C]{$\varepsilon$:}{a strictly positive, sufficiently small parameter\nomunit{See~Proposition~\eqref{p:perturbation}}
\nomenclature[C]{$\delta$:}{a strictly positive parameter}  \nomunit{See~\eqref{e:cosaedelta},~\eqref{e:c:parametribis}}
\nomenclature[C]{$\rho$:}{the strictly positive parameter in~\eqref{e:c:parametrirho} \nomunit{See~\S~\ref{sss:tildeu},~Remark~\ref{rem:qrho}}}
\nomenclature[C]{$U_I$, $U_{II}$ and $U_{I\!I\!I}$:}{fixed states in $\R^3$ \nomunit{See~\eqref{e:uduetre}}}
\nomenclature[C]{$\mathfrak R_\ell, \dots, \mathfrak R_r$:}{open subsets of $\R$ \nomunit{See~\eqref{e:regions}}}
\nomenclature[A]{$\mathcal O(1)$:}{any function satisfying $ 0 < c \leq \mathcal O(1) \leq C$ for suitable constants $c,C>0$. The precise value of $C$ and $c$ can vary from line to line\nomunit{}}
\nomenclature[A]{$D_{i}[\sigma,\bar U]$:}{the $i$-wave fan curve through $\bar U$ \nomunit{See~\eqref{e:wavefan}}}
\nomenclature[A]{$R_i [s, \bar U]$:}{the $i$-rarefaction curve through $\bar U$ \nomunit{See~\eqref{e:intcur}}}
\nomenclature[A]{$S_i[s, \bar U]$:}{the $i$-shock curve through $\bar U$ \nomunit{See~\S~\ref{ss:wft}}}
\nomenclature[C]{$\V_i$:}{the strength of a shock $i$ \nomunit{See Page~\pageref{label:strength}}}

\section{Overview of previous results}
\label{s:overview} For the reader's convenience, in this section we go over some previous results that we will need in the following. More precisely, we proceed as follows:
\begin{itemize}
\item[\S~\ref{ss:wft}:] we quickly summarize the wave front-tracking algorithm~\cite{Bre} and we fix some notation. 
\item[\S~\ref{SS:bjcon}:] we introduce the Baiti-Jenssen system and 
we discuss some of its properties. 
\end{itemize}
\subsection{The wave front-tracking approximation algorithm}
\label{ss:wft}
In this paragraph we briefly go over the version of the wave 
front-tracking algorithm discussed in~\cite{Bre} (see in particular Chapter 7 in there). We refer to~\cite{Bre} and to the books by Dafermos~\cite[\S~14.13]{Dafermos} and by Holden and Risebro~\cite{HoldenRisebro} for a more extended discussion and for a comprehensive list of references. Also, in the following discussion we assume that each characteristic field is genuinely nonlinear (i.e., that~\eqref{e:gnlgenerale} holds true) because this hypothesis is satisfied by our system. 

We first introduce some notation. We recall that the \emph{$i$-wave fan curve through $\bar U$} is
\begin{equation}
\label{e:wavefan}
 D_i [s, \bar U] : =
 \left\{
 \begin{array}{ll}
 R_i [s, \bar U] & s \ge 0\\
 S_i [s, \bar U] & s <0. \\
 \end{array}
 \right.
\end{equation}
In the previous expression, $R_i$ is the integral curve of $\vec r_i$ passing through $\bar U$, namely the solution of the Cauchy problem 
\begin{equation}
\label{e:intcur}
\left\{
\begin{array}{lll}
 \displaystyle{\frac{d R_i}{ds} = \vec r_i (U)}, \\
 \phantom{ciao} \\
 R_i [0, \bar U] = \bar U. \\
\end{array}
\right.
\end{equation} 
Also, we denote by $S_i$ the $i$-Hugoniot locus, i.e.~the set of states that can be joined to $\bar U$ by a shock of the $i$-family, namely by a $i$-shock. The \emph{speed of the shock} can be computed by using the Rankine-Hugoniot conditions. 
We call the absolute value $|s|$ \emph{strength} of the shock between $\bar U$ and $S_i [s, \bar U]$. \label{label:strength}

We are now ready to outline the the construction of the wave front-tracking approximation. We fix a small parameter $\nu >0$ and we denote by $U^\nu$ the wave front-tracking approximation. 
The final goal is to show that when $\nu \to 0^+$ the family $U^\nu$ converges to the admissible solution of the Cauchy problem~\eqref{e:cl}-\eqref{e:cau}. The main steps to construct $U^\nu$ are the following (we refer to~\cite[\S~7]{Bre} for a detailed discussion):
\begin{enumerate}
\item we construct $U^{\nu}_0$, a piecewise constant approximation of the initial datum $U_0$. 
\item At each discontinuity of $U^{\nu}_0 $ we solve the Riemann problem between the left and the right state by relying on the Lax Theorem~\cite{Lax}. We want to define $U^\nu$ in such a way that $U^\nu (t, \cdot)$ is piecewise constant for almost every $t>0$. Hence, we replace the rarefaction waves in the Lax solution of the Riemann problem with a suitably defined piecewise constant approximation. The resulting approximate solution is called \emph{accurate Riemann solver}. 
\item We repeat the above procedure at each discontinuity point of $U^{\nu}_0$ and we define $U^\nu$ by juxtaposing the approximate solution of each Riemann problem. In this way, $U^\nu$ is piecewise constant and has a finite number of discontinuity lines. By a slight 
abuse of notation, we call \emph{rarefaction waves} the discontinuity lines corresponding to rarefactions. We can also introduce a notion of \emph{strength} for the rarefaction wave (see~\cite[Chapter 7]{Bre} for the technical details). 
\item Let us consider the point at which two \emph{waves} (i.e., discontinuity lines) \emph{interact} (i.e.~cross each other). The interaction determines a new Riemann problem, which is solved by using the same procedure as in step 2. above. In this way we can extend the wave front-tracking approximation $U^\nu$ after the first interaction occurs. 
\item In principle, it may happen that the number of discontinuity lines of $U^\nu$ blows up in finite time: this would prevent us from defining $U^\nu$ globally in time. The number of discontinuities can blow up if for instance $U^\nu$ contains a wave pattern like the one illustrated in Figure~\ref{F:infty}. 
\item To prevent the number of discontinuities from blowing up, we introduce the so-called \emph{non physical waves}. The exact definition is quite technical and it is given in~\cite[\S7.2]{Bre}, but the basic idea is the following. We introduce a threshold $\mu_\nu$ and we consider an interaction point. If the product between the strengths of the incoming waves is bigger than $\mu_\nu$, then we use the \emph{accurate Riemann solver} defined at step 2. If it is smaller, we use a so-called \emph{simplified Riemann solver}. The \emph{simplified Riemann solver} involves a minimum number of outgoing waves. Basically, all the new waves are packed together in a single \emph{non physical wave}, which travels at a faster speed than any other wave. 
\item The analysis in~\cite[\S7]{Bre} shows that, by relying on a suitable choice of the approximate and of the simplified Riemann solver, of the approximate initial datum $U^{\nu}_0$ and of the threshold $\mu_\nu$, one can prove that the approximate wave front-tracking solutions $U^\nu$ converge as $\nu \to 0^+$ to the unique admissible solution of the Cauchy problem~\eqref{e:cl},~\eqref{e:cau}. 
\end{enumerate}
\subsection{The Baiti-Jenssen system}
\label{SS:bjcon}
In this paragraph we recall some results from~\cite{BaJ}. More precisely, we proceed as follows. 
\begin{itemize}
\item[\S~\ref{sss:baj1}:] we introduce the explicit expression of the Baiti-Jenssen system and we comment on it. 
\item[\S~\ref{sss:baj2}:] we recall the explicit expression of the eigenvalues and we go over the structure of the wave fan curves. 
\end{itemize}
\subsubsection{The system}
\label{sss:baj1}
We introduce the Baiti-Jenssen system. We fix $\eta \in \, ]0, 1[$ and we define the function $F_\eta: \R^3 \to \R^3$ by setting 
\begin{equation}
\label{e:pertSyst}
F_{\eta}(U) : = 
\left(
\begin{matrix}
	\displaystyle{ 4 \big[ (v-1) u - w \big] + \eta p_1(U) \phantom{\int}} \\
	v^2 \\
	\displaystyle{4 \Big\{ {v (v-2)u} - (v-1) w \Big\}
	 + \eta p_3(U) } \\
	\end{matrix}\right) 
\end{equation}	
In the above expression, $u$, $v$ and $w$ denote the components of $U$, namely $U=(u, v, w)$. 
The functions $p_{1}$ and $p_{3}$ are given by 
\begin{align}
\label{e:pq}
 p_1(U) &= \frac{1}{2} \Big\{ [w -(v-2) u ]^2 -[ w-vu]^2\big] \Big\}=2 u w- 2 u^{2} (v-1 ), \\
 p_3(U) &= \frac{1}{2} 
 \Big\{v [w -(v-2) u ]^2- (v-2) [ w-vu]^2 \Big\}= w^2-u^2 (v-2) v .
\end{align}
In the following we are concerned with the system of conservation laws 
\begin{equation}
\label{e:cl2}
 \pt U + \px \big[ F_\eta(U) \big] = 0,
\end{equation} 
which we term \emph{Baiti-Jenssen system}. Two remarks are here in order. First,~\eqref{e:pertSyst} is exactly system (3.11) in~\cite{BaJ} provided that we choose 
$\varepsilon = \eta$, $g(v) = v^2$, $a(v) = v$, $b(v) = v-2$, $c=4$. The reason why 
we only consider a particular representative of the class of systems considered in~\cite{BaJ} is because we want to simplify the analysis and the exposition. Indeed, some parts of the proof of Theorem~\ref{T:main} are already fairly technical and we have decided to keep the rest as simple as possible. 
However, we are confident that our argument can be extended to much more general classes of systems. 

Second, the celebrated existence and uniqueness results~\cite{Glimm,Bre} mentioned in the introduction imply that there are constants $C>0$ and $\delta>0$ such that, if $U_0$ is a compactly supported function satisfying
$$
\TV\, U_0 \leq \delta, 
$$ 
then the Cauchy problem obtained by coupling~\eqref{e:cl2} with the condition $U(0, \cdot)=U_0$ has a unique, global in time admissible solution which satisfies
$$
\TV\, U(t, \cdot) \leq C \, \TV\, U_0, \quad \text{for every $t>0$}. 
$$ 
In principle, both $\delta$ and $C$ depend on $\eta$. However, by looking at the proof of the convergence of the wave front-tracking approximation one realizes that 
$C$ and $\delta$ only depend on bounds on $F_\eta$ and its derivatives of various orders. Since all these functions are uniformly bounded in $\eta$, then we can choose $C$ and $\delta$ in such a way that they \emph{do not} depend on $\eta$. In the following, we will let $\eta$ vary but we will always assume that the function $U$ attains values in the unit ball, namely $|U| < 1$. This will be \emph{a posteriori} justified because we will choose a compactly supported initial datum with sufficiently small total variation. 
\subsubsection{Eigenvalues and wave fan curves}
\label{sss:baj2}
 We now recall some features of system~\eqref{e:pertSyst} and we refer to~\cite[pp. 841-843]{BaJ} for the proof. 
First, the eigenvalues of the Jacobian matrix $DF_{\eta} (U)$ are 
\begin{align}
\label{e:eigenvaluese}
&\lambda_{1}(U)=2 \eta \big[w-(v-2)u \big]-4 &&<&&\lambda_{2}(U)=2v &&<&& \lambda_{3}(U)=2 \eta \big[ w-v u \big]+4.
\end{align}
Note that 
\begin{equation}
\label{e:boundautovalori}
 -6 < \lambda_1 (U)< - \frac{5}{2} < -2 
 < \lambda_2 (U) < 2 < 3 < \lambda_3 (U) 
 < 5 
 \quad \text{if $|U| < 1$ and $0 \leq \eta < \frac{1}{4}$} 
\end{equation}
and hence in particular 
\begin{equation}
\label{e:boundautovalori2}
 | \lambda_1 (U)|, \; | \lambda_2 (U)|, \; 
 | \lambda_3 (U) | 
 < 6 
 \quad \text{if $|U| < 1$ and $0 < \eta < \frac{1}{4}$}. 
\end{equation}
Note that~\eqref{e:boundautovalori} implies that the system is strictly hyperbolic if $|U| < 1$ and $0 \leq \eta <1/4$. Note furthermore that $2$ is a Lipschitz constant of each eigenvalue if $|U| < 1$ and $0 \leq \eta <1/4$. The first and the third right eigenvectors are 
\begin{equation}
\label{e:eigenvectors12}
\vec r_{1}(U)=\left(\begin{matrix}1 \\ 0 \\ v\end{matrix}\right)
\quad \text{and} \quad
\vec r_{3}(U)=\left(\begin{matrix}1 \\ 0 \\ v-2 \end{matrix}\right),
\end{equation}
respectively. The explicit expression of the second eigenvector is not relevant here. 
Note however that the assumption of genuine nonlinearity is satisfied since 
\begin{subequations}
\label{e:gnl}
\begin{equation}
\label{e:gnl1}
\nabla\lambda_{1}(U)\cdot \vec r_{1}(U)=4 \eta >0, 
\qquad 
\nabla\lambda_{2}(U)\cdot \vec r_{2}(U) = 2 >0
\end{equation}
and
\begin{equation}
\label{e:gnl2}
\nabla\lambda_{3}(U)\cdot \vec r_{3}(U)= -4 \eta <0.
\end{equation}
\end{subequations}
Note that~\eqref{e:gnl2} implies~\eqref{e:gnlgenerale} provided that we change the orientation of $\vec r_3$. 
Owing to~\eqref{e:eigenvectors12}, the 1- and the 3-wave fan curve through $\bar U= (\bar u, \bar v, \bar w)$
are straight lines in the planes $v = \bar v$. More precisely, 
\begin{subequations}
\label{e:wavecurves13}
\begin{align}
D_{1}[\sigma; \bar U]=
\left(\begin{matrix}\sigma+\bar u \\ \bar v \\ \bar v\sigma+\bar w\end{matrix}\right)=\bar U + \sigma\vec r_{1}(\bar U)=\bar U + \sigma\vec r_{1}(\bar v),
\\
D_{3}[\tau; \bar U]=\left(\begin{matrix}\tau+\bar u \\ \bar v \\ (\bar v-2)\tau+\bar w\end{matrix}\right)=\bar U+\tau\vec r_{3}(\bar U)=\bar U+\tau\vec r_{3}(\bar v).
\end{align}
\end{subequations}
Owing to~\eqref{e:gnl}, we have that 
\begin{itemize}
\label{ite:analysis13waves}
\item[$\bullet$] if $\sigma<0$, then $\bar U$ and $D_{1}[\sigma;(\bar u,\bar v,\bar w)]$ are connected by a $1$-\emph{shock}. If $\sigma>0$, then $\bar U$ and $D_{1}[\sigma;(\bar u,\bar v,\bar w)]$ are connected by a $1$-\emph{rarefaction wave}. 
\item[$\bullet$] if $\tau<0$, then $\bar U$ and $D_{3}[\sigma;(\bar u,\bar v,\bar w)]$ are connected by a $3$-\emph{rarefaction wave}. If $\tau>0$, then $\bar U$ and $D_{3}[\sigma;(\bar u,\bar v,\bar w)]$ are connected by a $3$-\emph{shock}. 
\end{itemize}
To understand the structure of the second wave fan curve through $\bar U$ we use the following simple observation, which for future reference we state as a lemma. 
\begin{lemma}
\label{L:v}
Assume that $U=(u, v, w)$ is an admissible solution of the system of conservation laws~\eqref{e:cl2}. Then the second component $v$ is an entropy admissible solution 
of the scalar conservation law 
\begin{equation}
\label{e:v}
 \pt v + \px \big[v^2 \big] =0.
\end{equation}
\end{lemma}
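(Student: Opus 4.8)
The plan is to observe that the statement is essentially a consequence of the block structure of the flux $F_\eta$: the second component of the system decouples from the first and the third. Indeed, looking at the middle line of~\eqref{e:pertSyst}, the flux for the $v$-component is simply $v^2$, with no dependence on $u$ or $w$. So, at the level of distributions, whenever $U=(u,v,w)$ solves~\eqref{e:cl2}, the scalar function $v$ solves~\eqref{e:v} in the sense of distributions. The only real content is therefore to check that the \emph{admissibility} (entropy) of $U$ as a solution of the system descends to admissibility of $v$ as a solution of the scalar equation.

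First I would make precise which notion of admissible solution is in force: by the discussion in~\S~\ref{SS:bjcon}, the admissible solution of~\eqref{e:cl2} is the one constructed by Glimm / wave front-tracking, equivalently the unique solution selected by the Bressan uniqueness theory, and in particular it satisfies the Lax admissibility conditions at every shock. I would then argue on the front-tracking approximations $U^\nu=(u^\nu,v^\nu,w^\nu)$: since $\lambda_2(U)=2v$ depends only on $v$, a $2$-wave in $U^\nu$ is precisely a discontinuity of $v^\nu$ (with $u^\nu,w^\nu$ possibly also jumping but along the $2$-wave fan curve), while across $1$- and $3$-waves the component $v$ is constant by~\eqref{e:wavecurves13}. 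Hence $v^\nu$ is itself a front-tracking approximation for the scalar conservation law~\eqref{e:v}: its fronts are exactly the $2$-fronts, the Rankine–Hugoniot speed of a $2$-front coincides with the Rankine–Hugoniot speed of the corresponding scalar jump of $v$ (both equal $\bar v + \bar v'$ by the formula for $v^2$), and the Lax condition $\lambda_2(U_\ell)>\text{speed}>\lambda_2(U_r)$ for a $2$-shock is, since $\lambda_2=2v$, exactly the Oleinik/Lax condition for the scalar flux $v\mapsto v^2$, i.e. $v_\ell>v_r$.

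Then I would pass to the limit $\nu\to 0^+$: by the convergence of wave front-tracking (\S~\ref{ss:wft}), $U^\nu\to U$ in $L^1_{\loc}$, so in particular $v^\nu\to v$ in $L^1_{\loc}$; since each $v^\nu$ is an approximate entropy solution of the uniformly convex scalar law~\eqref{e:v} with uniformly bounded total variation, the limit $v$ is the Kru\v{z}kov entropy solution of~\eqref{e:v}. Concretely, one checks that for every convex entropy $\eta(v)$ with entropy flux $q(v)$ (i.e. $q'=2v\,\eta'$), the entropy production $\pt \eta(v^\nu)+\px q(v^\nu)$ is a measure whose positive part tends to zero, which yields $\pt\eta(v)+\px q(v)\le 0$ in the limit; uniqueness then identifies $v$ with the entropy admissible solution. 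Alternatively, and perhaps more cleanly, since the entropy admissible solution of~\eqref{e:cl2} is by definition a limit of vanishing viscosity / front tracking, one may just remark that the second component of such a limit is a limit of (approximate) entropy solutions of~\eqref{e:v}, hence is itself \emph{the} entropy solution.

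I do not expect a serious obstacle here; the only point requiring a little care is the bookkeeping that a $2$-wave of the system is seen by the $v$-component exactly as a scalar jump with the correct speed and the correct sign condition, so that Lax admissibility for the system transfers to entropy admissibility for the scalar equation — this is where the specific form $\lambda_2=2v$ and flux $v^2$ is used. Everything else is the decoupling of the middle equation plus the stability of entropy solutions under $L^1_{\loc}$ limits of approximate entropy solutions with bounded variation.
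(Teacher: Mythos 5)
Your proposal is correct, and it in fact contains two arguments: the route you develop in detail (front tracking) is genuinely different from the paper's, while your closing ``alternative'' remark is essentially the paper's own proof. The paper argues in two lines via the second order approximation: by Bianchini--Bressan, the admissible solution of~\eqref{e:cl2} is the unique limit of $\pt U_\varepsilon + \px [F_\eta(U_\varepsilon)] = \varepsilon\,\partial^2_{xx}U_\varepsilon$, and since the second component of $F_\eta$ in~\eqref{e:pertSyst} is exactly $v^2$ and the viscosity acts diagonally, the component $v_\varepsilon$ solves the viscous scalar equation exactly; uniqueness of the vanishing viscosity limit for scalar laws then identifies $v$ with the entropy solution of~\eqref{e:v}. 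This buys brevity and avoids all wave bookkeeping, because the decoupling is exact already at the approximate (viscous) level. Your front-tracking route also works, but it is the longer path: besides the correct observation that $2$-fronts are seen by $v^\nu$ as scalar jumps with the right Rankine--Hugoniot speed and that the Lax condition $v_\ell>v_r$ is the Oleinik condition for $v\mapsto v^2$, you must also account for the $2$-rarefaction fronts (each is a small entropy-violating jump, with entropy production quadratic in its size, hence total production $O(\nu)\cdot\TV$) and for the non-physical fronts of the simplified Riemann solver (whose total strength is infinitesimal with $\nu$, so their contribution to the entropy balance also vanishes); with these standard estimates your limit argument closes, and as a by-product it makes explicit the correspondence between $2$-waves of the system and scalar jumps of $v$, which matches~\eqref{e:wavecurves13} and is used elsewhere in the paper. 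So: no gap, just note that the paper's proof is the shorter viscosity argument you sketch at the end, and that your main argument needs the extra (routine) control of rarefaction and non-physical fronts to be complete.
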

\begin{proof}
Lemma~\ref{L:v} was used in~\cite{BaJ}, but we provide the proof for the sake of completeness. Owing to the analysis in~\cite{BiaBre} (see in particular Theorem 1 and \S~15 in there), the admissible solution $U$ can be recovered as the unique limit $\varepsilon \to 0^+$ of the second order approximation
$$
 \pt U_\varepsilon + \px \big[ F_\eta(U_\varepsilon ) \big] = \varepsilon \, \partial_{xx}^2 U_\varepsilon .
$$ 
We then conclude by considering the second component and recalling that the entropy admissible solution of a scalar conservation law is the unique limit of the vanishing viscosity approximation (see~\cite[\S~6.3]{Dafermos}). 
\end{proof}
By combining~\eqref{e:gnl} with Lemma~\ref{L:v} and by recalling that 
the flux in~\eqref{e:v} is convex we conclude tha we can choose the parametrization of $D_2$ in such a way that
\begin{itemize}
\item[$\bullet$] if $s<0$, then $\bar U$ and $D_2 [s, \bar U]$ are connected by a $2$-shock and the second component of $D_2 [s, \bar U]$ is $\bar v + s< \bar v$
\item[$\bullet$] if $s>0$, then $\bar U$ and $D_2 [s, \bar U]$ are connected by a $2$-rarefaction wave and the second component of $D_2 [s, \bar U]$ is $\bar v + s> \bar v$.
\end{itemize}
\section{Preliminary results concerning the Baiti-Jenssen system}
\label{SS:bjin}
This section concerns the Baiti-Jenssen system~\eqref{e:cl2}. It is divided into two parts:
\begin{itemize}
\item In \S~\ref{sss:13}, \S~\ref{sss:1133}, \S~\ref{sss:12} and \S~\ref{ss:22} we discuss interaction estimates for the Baiti-Jenssen system. More precisely, in \S~\ref{sss:13}, \S~\ref{sss:1133} we recall some analysis from~\cite{BaJ}. In \S~\ref{sss:12} we 
state a new version of a result established in~\cite{BaJ}. The proof is provided in the companion paper~\cite{CaravennaSpinolo}. In \S~\ref{ss:22} we go over a new interaction estimate established in~\cite{CaravennaSpinolo}. 
\item In \S~\ref{ss:wp} we discuss new results concerning the solution of the Riemann problem in the case when the left and the right states satisfy suitable 
structural assumptions. 
\end{itemize}
Both parts will be used in \S~\ref{s:pprop} in the analysis of the wave-front tracking approximation of a general class of Cauchy problems.
\subsection{Analysis of 1-3 interactions}
\label{sss:13}
In this paragraph we consider the interaction between a shock of the first family, i.e.~a 1-shock, and a 3-shock. 
More precisely, we term $U_\ell$, $U_m$ and $U_r$ the left, middle and right state before the interaction, respectively (see Figure~\ref{F:2311}, left part). In other words, 
\begin{equation}
\label{e:13prima}
 U_m = D_3[\tau, U_\ell], \qquad U_r = D_1 [\sigma, U_m]
\end{equation}
for some $\tau>0$, $\sigma <0$, where $D_{1}[\cdot]$ and $D_{3}[\cdot]$ are given in~\eqref{e:wavecurves13}. 

We now want to solve the Riemann problem between $U_\ell$ (on the left) and $U_r$ (on the right). 
We recall that the 1- and the 3-wave fan curves are just straight lines in planes where the $v$ component is constant, see~\eqref{e:wavecurves13}. The slope of the lines only depends on $v$. This implies that the 1- and the 3-wave fan curves commute
and the solution of the Riemann problem between $U_\ell$ (on the left) and $U_r$ (on the right) contains no 2-wave. In other words, the following holds. 
We denote by $U'_m$ the middle state after the interaction
(see again Figure~\ref{F:2311}, left part). From~\eqref{e:13prima} we get 
\begin{equation}
\label{e:13dopo}
 U'_m = D_1[\sigma, U_\ell], \qquad U_r = D_3[\tau, U'_m]. 
\end{equation}
\subsection{Analysis of 1-1 and 3-3 interactions} 
\label{sss:1133}
Owing to the particular structure~\eqref{e:wavecurves13} of the 1- and 3-wave fan curves, the incoming shocks in 1-1 interactions and 3-3 interactions simply merge. In particular, no new wave is produced. More precisely, we have the following: we focus on 3-3 interactions and we refer to Figure~\ref{F:2311} for a representation. We term $U_\ell$, $U_m$ and $U_r$ the left, middle and right state before the interaction, respectively. In other words, 
$$
 U_m = D_3[\tau_\ell, U_\ell], \qquad U_r = D_3 [\tau_r, U_m]
$$ 
for some $\tau_\ell$, $\tau_r >0$. Owing to~\eqref{e:wavecurves13}, we have $U_r = D_1[\tau_\ell+ \tau_r, U_{\ell}]
$ and hence the only outgoing wave is a 3-wave. The analysis of 1-1 interactions is completely analogous. 
\begin{figure}
\begin{center}
\begin{picture}(0,0)%
\includegraphics{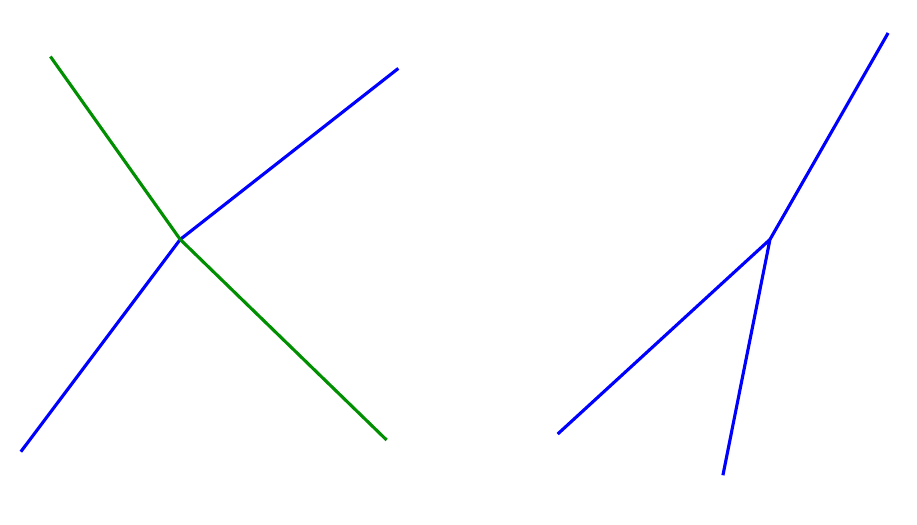}%
\end{picture}%
\setlength{\unitlength}{3947sp}%
\begingroup\makeatletter\ifx\SetFigFont\undefined%
\gdef\SetFigFont#1#2#3#4#5{%
  \reset@font\fontsize{#1}{#2pt}%
  \fontfamily{#3}\fontseries{#4}\fontshape{#5}%
  \selectfont}%
\fi\endgroup%
\begin{picture}(4391,2500)(436,-7565)
\put(2405,-5324){\makebox(0,0)[lb]{\smash{{{\color[rgb]{.259,0,1}$\tau$}%
}}}}
\put(1895,-6145){\makebox(0,0)[lb]{\smash{{{}{\color[rgb]{0,0,0}$U_r$}%
}}}}
\put(1357,-6740){\makebox(0,0)[lb]{\smash{{{}{\color[rgb]{0,0,0}$U_m$}%
}}}}
\put(2348,-7221){\makebox(0,0)[lb]{\smash{{{}{\color[rgb]{0,.56,0}$\sigma$}%
}}}}
\put(451,-7391){\makebox(0,0)[lb]{\smash{{{}{\color[rgb]{.259,0,1}$\tau$}%
}}}}
\put(649,-6230){\makebox(0,0)[lb]{\smash{{{}{\color[rgb]{0,0,0}$U_\ell$}%
}}}}
\put(508,-5494){\makebox(0,0)[lb]{\smash{{{}{\color[rgb]{0,.56,0}$\sigma$}%
}}}}
\put(1301,-5409){\makebox(0,0)[lb]{\smash{{{}{\color[rgb]{0,0,0}$U'_m$}%
}}}}
\put(3226,-5692){\makebox(0,0)[lb]{\smash{{{}{\color[rgb]{0,0,0}$U_\ell$}%
}}}}
\put(4529,-5126){\makebox(0,0)[lb]{\smash{{{}{\color[rgb]{.259,0,1}$\tau_\ell+\tau_r$}%
}}}}
\put(3594,-7051){\makebox(0,0)[lb]{\smash{{{}{\color[rgb]{0,0,0}$U_m$}%
}}}}
\put(4784,-6202){\makebox(0,0)[lb]{\smash{{{\color[rgb]{0,0,0}$U_r$}%
}}}}
\put(3878,-7533){\makebox(0,0)[lb]{\smash{{{}{\color[rgb]{.259,0,1}$\tau_r$}%
}}}}
\put(2971,-7334){\makebox(0,0)[lb]{\smash{{{}{\color[rgb]{.259,0,1}$\tau_\ell$}%
}}}}
\end{picture}%
\caption{A 1-3 interaction (left) and a 1-1 interaction (right). The value of $v$ is constant across each interaction}
\label{F:2311}
\end{center}
\end{figure} 
\subsection{Analysis of 1-2 and 2-3 interactions}
\label{sss:12}
In this paragraph we explicitly discuss the interaction of a 1-shock with a 2-shock. The analysis of the interaction of a 2-shock with a 3-shock is completely analogous. Lemma~\ref{L:ie} below can be loosely speaking formulated as follows: if $\eta$ and the strength of the incoming shocks are sufficiently small, then the outgoing waves are three shocks (and hence, in particular, no outgoing wave is a rarefaction). Also, we have a bound from below and from above on the strength of the outgoing shocks. Note that a result similar to Lemma~\ref{L:ie} is established in~\cite{BaJ}: the novelty of Lemma~\ref{L:ie} is that we have a more precise estimate on the strength of the outgoing 3-shock, compare the left part of~\eqref{e:noraref} with~\cite[eq. (5.9)]{BaJ}. Also, in the case of Lemma~\ref{L:ie} we restrict to data with small total variation. The proof of Lemma~\ref{L:ie} is provided in~\cite{CaravennaSpinolo} and is based on perturbation argument: one firstly establishes Lemma~\ref{L:ie} in the case when $\eta =0$ and then considers the case $\eta >0$. 

To give the formal statement of Lemma~\ref{L:ie} we introduce some notation. We term $U_\ell$, $U_m$ and $U_r$ the left, middle and right state before the interaction, respectively. See Figure~\ref{F:2interazione}, left part, for a representation. In other words, 
\begin{equation}
\label{e:iprima}
 U_m = D_2[ s, U_\ell], \qquad U_r = D_1 [\sigma, U_m] \quad 
 \text{for some $s<0$, $\sigma <0$}.
\end{equation} 
 Also, we denote by $U'_m$ and $U''_m$ the new intermediate states after the interaction, namely 
\begin{equation}
\label{e:idopo}
 U'_{m} = D_1[ \sigma', U_\ell], \qquad U''_m = D_2 [s', U'_m], \qquad U_r = D_{3} [\tau, U''_m] 
\end{equation}
for some $\sigma'$, $s'$ and $\tau \in \R$. 
Here is the formal statement of our result. 
\begin{lemma}
\label{L:ie}
Assume that~\eqref{e:iprima} and~\eqref{e:idopo} hold. Then 
\begin{equation}
\label{e:esse}
 s'= s. 
\end{equation} 
Also, there is $\varepsilon >0$ such that the following holds. 
If $|U_\ell|, \, |s|, \, |\sigma| \leq 1/4$ and $0 \leq \eta < \varepsilon$, then 
\begin{equation}
\label{e:noraref}
-2|\sigma|<\sigma'<-\frac{|\sigma|}{2} \quad \text{and} \quad \frac{1}{100} s\sigma <\tau < 10s \sigma .
\end{equation} 
\end{lemma}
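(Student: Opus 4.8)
The plan is to exploit the fact that the 1- and 3-wave fan curves are straight lines lying in hyperplanes $\{v = \text{const}\}$, so that the only genuinely nonlinear mixing happens through the second component, which by Lemma~\ref{L:v} is governed by the scalar conservation law $\partial_t v + \partial_x[v^2]=0$. First I would treat the exactly solvable case $\eta = 0$. When $\eta = 0$, the eigenvalues $\lambda_1$ and $\lambda_3$ are constant ($-4$ and $+4$) and the flux is quadratic in the first and third components; moreover the second component decouples entirely. A 1-shock carries a jump only in $(u,w)$ and the $v$-component is unchanged across it, while a 2-shock carries the jump $s<0$ in $v$ (and corresponding jumps in $u,w$). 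The key structural fact to extract from $\eta=0$ is that, solving the Riemann problem between $U_\ell$ and $U_r$, the outgoing 2-wave must again be a shock carrying exactly the same jump $s$ in the $v$-component, since $v$ solves a scalar conservation law and the total jump in $v$ between $U_\ell$ and $U_r$ equals $s$ (the 1-shock contributes nothing to $v$); this gives~\eqref{e:esse}, $s' = s$. One then computes explicitly, using~\eqref{e:wavecurves13} and the $D_2$ curve at $\eta = 0$, the outgoing strengths $\sigma'$ and $\tau$, and checks that at $\eta = 0$ one has $\sigma' = -|\sigma|$ (or whatever the exact value turns out to be after fixing the parametrization) and $\tau$ proportional to $s\sigma$ with an explicit nonzero constant. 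This pins down the ``unperturbed'' picture and in particular shows $\tau$ and $\sigma'$ are, at $\eta=0$, smooth nonvanishing functions of $(U_\ell, s, \sigma)$ on the relevant compact set, with $\tau$ vanishing to exactly first order in each of $s$ and $\sigma$.

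The second step is the perturbation argument for $0 < \eta < \varepsilon$. Here $\eta$ enters the flux through $p_1, p_3$, which perturbs $\lambda_1, \lambda_3$ and the Hugoniot loci; crucially, though, the eigenvectors $\vec r_1, \vec r_3$ in~\eqref{e:eigenvectors12} and hence the 1- and 3-wave fan curves~\eqref{e:wavecurves13} are \emph{independent of $\eta$}, and by Lemma~\ref{L:v} the $v$-component still solves~\eqref{e:v} regardless of $\eta$; so~\eqref{e:esse} persists for all $\eta$ with no perturbation needed. For $\sigma'$ and $\tau$ I would set up the Riemann problem between $U_\ell$ and $U_r$ as a system of equations of the form $\Phi(\sigma', s', \tau; U_\ell, s, \sigma, \eta) = 0$ expressing $U_r = D_3[\tau, D_2[s', D_1[\sigma', U_\ell]]]$, with $U_\ell, s, \sigma$ as data, and invoke the implicit function theorem / standard Lax solvability of the Riemann problem (for small data, using strict hyperbolicity~\eqref{e:boundautovalori} and genuine nonlinearity~\eqref{e:gnl}) to get $\sigma', \tau$ as $C^1$ functions of $(U_\ell, s, \sigma, \eta)$. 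Since at $\eta = 0$ we have the explicit values above, continuity in $\eta$ (uniform on the compact parameter set $|U_\ell|, |s|, |\sigma| \le 1/4$) gives: for $\eta$ small enough, $\sigma'$ stays within, say, a factor $2$ of its $\eta=0$ value, yielding the two-sided bound $-2|\sigma| < \sigma' < -|\sigma|/2$ in~\eqref{e:noraref}; and $\tau$ stays close to its $\eta=0$ value $\approx c\, s\sigma$ with $c$ explicit, whence (absorbing the perturbation and the range of $U_\ell$ into the constants) $\tfrac{1}{100} s\sigma < \tau < 10 s\sigma$. The two-sidedness here also uses that $s\sigma > 0$ (product of two negatives), so the inequalities are consistent. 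The sign information — that $\sigma' < 0$, $s' = s < 0$, and $\tau > 0$, i.e.\ all three outgoing waves are shocks — comes from the $\eta=0$ computation together with smallness of the perturbation; in particular no outgoing rarefaction appears.

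The main obstacle I expect is bookkeeping the composition $D_3 \circ D_2 \circ D_1$ and extracting sign-definite, explicitly-constant-bounded estimates rather than just ``$\mathcal O(1)$'' statements: one must be careful that the constants in~\eqref{e:noraref} ($\tfrac{1}{100}$ and $10$, the factors $2$) are genuinely achieved, which forces one to actually evaluate the $\eta=0$ Riemann solver in closed form (using that $D_1, D_3$ are straight lines and that the $v$-dynamics is the Burgers-type equation~\eqref{e:v}, whose 2-shock speed and Hugoniot locus are elementary) and to control the modulus of continuity in $\eta$ uniformly on the data set. A secondary subtlety is verifying $s' = s$ rigorously: this rests on Lemma~\ref{L:v} plus the observation that the 1- and 3-waves leave $v$ unchanged, so the signed $v$-increment is conserved through the interaction; one should note this is exact, not approximate. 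As the excerpt states, the full details of this computation are carried out in the companion paper~\cite{CaravennaSpinolo}, via exactly this route (prove it at $\eta = 0$, then perturb).
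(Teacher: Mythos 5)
Your overall route coincides with the one this paper declares: the paper itself does not prove Lemma~\ref{L:ie} here, it only records that the proof is given in the companion paper~\cite{CaravennaSpinolo} and that it proceeds exactly as you propose (explicit analysis at $\eta=0$, then a perturbation argument for $\eta>0$), and your derivation of~\eqref{e:esse} from the fact that $D_1,D_3$ leave the $v$-component unchanged while the $D_2$-parameter measures the $v$-increment is the right (and exact) argument.

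There is, however, one step where your perturbation argument, as phrased, would fail. You claim that uniform continuity in $\eta$ on the compact set $|U_\ell|,|s|,|\sigma|\le 1/4$ lets $\tau$ ``stay close to its $\eta=0$ value $\approx c\,s\sigma$'' and hence yields $\tfrac{1}{100}s\sigma<\tau<10\,s\sigma$. But the lemma imposes no lower bound on $|s|,|\sigma|$: for any fixed $\eta>0$, a mere modulus-of-continuity bound $|\tau(\eta)-\tau(0)|\le\omega(\eta)$ is an \emph{absolute} error, which swamps the quantity $s\sigma$ as soon as $|s||\sigma|\ll\omega(\eta)$, so the two-sided quadratic bound does not follow. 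What is needed is an error of the form $\mathcal O(\eta)\,|s\sigma|$; equivalently, one should factor the quantities before perturbing, e.g.\ use that $\tau$ vanishes identically when $s=0$ or $\sigma=0$ (if $\sigma=0$ the outgoing pattern is the single 2-shock, if $s=0$ it is the single 1-shock, by~\eqref{e:iprima}--\eqref{e:idopo} and~\eqref{e:wavecurves13}) to write $\tau=s\sigma\,g(\eta,U_\ell,s,\sigma)$ and $\sigma'=\sigma\bigl(1+s\,h(\eta,U_\ell,s,\sigma)\bigr)$ with $g,h$ smooth, compute $g,h$ at $\eta=0$ (your leading-order value of $g$ is indeed of order $1/2$ for $|U_\ell|\le 1/4$, comfortably inside $(1/100,10)$), and then apply uniform continuity \emph{to $g$ and $h$}. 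The same remark applies to your $\eta=0$ analysis itself: ``vanishing to exactly first order in each of $s$ and $\sigma$'' must be established uniformly over the whole range $|s|,|\sigma|\le 1/4$, not only asymptotically as $s,\sigma\to 0$, since the constants $1/100$, $10$ and the factor $2$ in~\eqref{e:noraref} are claimed on that full range. With this factorization the argument closes; without it, the final inference is a genuine gap.
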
 
\subsection{Analysis of 2-2 interactions}
\label{ss:22}
In this paragraph we recall a result from~\cite{CaravennaSpinolo} concerning the interaction between two 2-shocks. 
As usual, we term $U_\ell$, $U_m$ and $U_r$ the left, middle and right state before the interaction. We refer to Figure~\ref{F:2interazione}, right part, for a representation.
Lemma~\ref{C:22shocks} can be loosely speaking formulated as follows. Fix a constant $a>0$ and assume that $U_\ell$, $U_m$ and $U_r$ are all sufficiently close to some state $(a, 0, -a)$. Then the outgoing waves are three shocks. 

The proof of Lemma~\ref{C:22shocks} is given in~\cite{CaravennaSpinolo} and it is divided into two parts: we firstly establish the result in the case $\eta =0$ by relying on the explicit expression of the 2-wave fan curve. We then extend it to the case $\eta >0$ by using a perturbation argument. 

Here is the formal statement. 
\begin{lemma}
\label{C:22shocks} There is a sufficiently small constant $\varepsilon >0$ such that the following holds. Fix a constant $a$ such that $0<a<1/2$ and set $U^{\sharp}: =(a, 0, - a)$. Assume that 
$$
 |U_\ell - U^{\sharp} | \leq \varepsilon a, \quad 
 s_1, s_2 <0, \quad |s_1|, \; |s_2| <\varepsilon a , \quad 0 \leq \eta \leq \varepsilon a \,. 
$$
 Assume furthermore that 
 $$
 U_r = D_2 \Big[s_2, D_2 [s_1, U_\ell] \Big]. 
 $$
 Then there are $\sigma <0$ and $\tau >0$ such that
 \begin{equation}
 \label{e:dini}
 U_r = D_3 \Big[ \tau, D_2 \big[ s_1 + s_2, D_1 [\sigma, U_\ell] \big]\Big]\,.
 \end{equation}
\end{lemma}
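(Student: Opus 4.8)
The plan is to solve the Riemann problem between $U_\ell$ (on the left) and $U_r := D_2[s_2, D_2[s_1, U_\ell]]$ (on the right) and to read off the signs of the outgoing waves. Since $U_r$ is close to $U_\ell$, the Lax theorem provides a unique small triple $(\sigma, s', \tau)$ with $U_r = D_3[\tau, D_2[s', D_1[\sigma, U_\ell]]]$, so the statement amounts to showing that one may take $s' = s_1 + s_2$, $\sigma < 0$ and $\tau > 0$. The identity $s' = s_1 + s_2$ is immediate and holds for every $\eta$: by Lemma~\ref{L:v} the component $v$ solves~\eqref{e:v}, so in the parametrization of \S~\ref{sss:baj2} the $v$-component of $D_2[s, \bar U]$ is $\bar v + s$, while $D_1, D_3$ leave $v$ unchanged by~\eqref{e:wavecurves13}; comparing the $v$-components of the two expressions for $U_r$ gives the claim, and everything reduces to the signs of $\sigma$ and $\tau$. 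Moreover $\sigma = \tau = 0$ whenever $s_1 = 0$ or $s_2 = 0$ (then $U_r$ lies on the $2$-wave fan curve through $U_\ell$, and one invokes uniqueness of the Lax decomposition), so the smooth functions $\sigma, \tau$ of $(s_1, s_2, \eta, U_\ell)$ are divisible by $s_1 s_2$.

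Next I would settle the case $\eta = 0$. It is convenient to use the coordinates $z := w - (v-2)u$, $y := w - vu$, in which $U^\sharp$ becomes $(z, v, y) = (a, 0, -a)$, a point of the plane $\{z + y = 0\}$. By~\eqref{e:wavecurves13}, for every $\eta$, a $1$-wave of parameter $\sigma$ acts by $(z, v, y) \mapsto (z + 2\sigma, v, y)$ and a $3$-wave of parameter $\tau$ by $(z, v, y) \mapsto (z, v, y - 2\tau)$; and the Rankine--Hugoniot conditions for $F_0$ give that a $2$-wave of parameter $s$ sends $(z_0, v_0, y_0)$ to $\bigl(z_0 - \tfrac{sE}{2(4+c)},\, v_0+s,\, y_0 + \tfrac{sE}{2(4-c)}\bigr)$, with $c := 2v_0+s$ and $E := (4+c)z_0 + (4-c)y_0$. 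The key point is that at $U^\sharp$ one has $E = 2as$, i.e.\ $E$ vanishes to first order in $s$ there; hence, composing two $2$-waves and matching with $D_3[\tau, D_2[s_1+s_2, D_1[\sigma, U_\ell]]]$, the second-order contributions to $\sigma$ and $\tau$ cancel, and for $U_\ell = U^\sharp$ one finds
\[
 \sigma = \tfrac{a}{32}\,s_1 s_2(s_1+s_2) + (\text{higher order}), \qquad
 \tau = -\tfrac{a}{32}\,s_1 s_2(s_1+s_2) + (\text{higher order}),
\]
which have the required signs since $s_1, s_2 < 0$. A short additional computation — using that the linear-in-$s$ terms which $E$ generates when $z_0 + y_0 \neq 0$ cancel between the two sides of the matching identity, together with the relation $\sigma \approx -\tau$ — shows that the signs persist when $U_\ell$ ranges over a ball of radius $\varepsilon a$ around $U^\sharp$. (Note that at $\eta = 0$ the first and third fields are linearly degenerate, by~\eqref{e:gnl1} with $\eta = 0$, so there these signs concern only the decomposition, not admissibility; they become an admissibility requirement once $\eta > 0$.)

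Finally I would pass to $\eta > 0$ by a perturbation argument. The $1$- and $3$-wave fan curves are independent of $\eta$ (the eigenvectors in~\eqref{e:eigenvectors12} are), while $D_2$ depends smoothly on $\eta$; as observed, $s' = s_1 + s_2$ stays exact, and $\sigma = \sigma_0 + \eta\,\sigma_1 + O(\eta^2)$ with $\sigma_0$ the value above and $\sigma_1$ still divisible by $s_1 s_2$. The core of the argument is to compute the first-order effect of the perturbation terms $\eta p_1, \eta p_3$ on the quantity $E$, hence on the reflected $1$-wave, and to check that near $U^\sharp$ and for $s_1, s_2 < 0$ this contribution is strictly negative, of size $\sim \eta |s_1 s_2|$; combined with $\sigma_0 < 0$ this yields $\sigma < 0$ for all $\eta \geq 0$, and the symmetric computation yields $\tau > 0$. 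The hypotheses $|U_\ell - U^\sharp|, |s_i|, \eta \leq \varepsilon a$ are then used to absorb all remaining higher-order errors, which become negligible relative to these leading contributions once $\varepsilon$ is small enough, and this fixes $\varepsilon$.

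I expect this last step to be the main obstacle. Since at $\eta = 0$ the reflected waves are only \emph{cubic} in the wave strengths (with coefficient of size $a$), while $\eta$ itself may be as large as $\varepsilon a$, the $\eta$-dependence cannot be treated as a lower-order correction of the unperturbed picture: one must genuinely show that switching on $\eta$ drives the reflected $1$-wave (respectively $3$-wave) toward being a shock, and must control how the relevant interaction coefficients scale in $a$ so that the conclusion holds uniformly for $0 < a < 1/2$. A secondary technical nuisance is verifying, for $U_\ell \neq U^\sharp$, the cancellation of the linear-in-$s$ terms mentioned above.
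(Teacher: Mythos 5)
Your plan follows the same route the paper indicates: the paper itself does not prove Lemma~\ref{C:22shocks} here but defers to the companion paper~\cite{CaravennaSpinolo}, describing the proof as an explicit analysis of the $2$-wave fan curve at $\eta=0$ followed by a perturbation argument in $\eta$. The parts you actually carry out are correct: $s'=s_1+s_2$ via the $v$-component (Lemma~\ref{L:v} and~\eqref{e:wavecurves13}), the vanishing of $\sigma,\tau$ when $s_1s_2=0$, and the $\eta=0$ computation in the coordinates $z=w-(v-2)u$, $y=w-vu$. Indeed the Rankine--Hugoniot relations for $F_0$ reduce to the conservation of $E=(4+c)z+(4-c)y$ across a $2$-shock of speed $c$, which gives exactly your jump formulas; matching the two decompositions of $U_r$, the quadratic terms cancel identically (not only at $U^\sharp$, so the ``linear-in-$s$'' cancellation you flag for $U_\ell\neq U^\sharp$ is not an issue), and one gets $\sigma\approx \tfrac{a}{32}s_1s_2(s_1+s_2)<0$, $\tau\approx-\tfrac{a}{32}s_1s_2(s_1+s_2)>0$ with relative errors $O(\varepsilon)$ uniformly on the ball $|U_\ell-U^\sharp|\le\varepsilon a$. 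So the case $\eta=0$ is in substance settled by your argument.

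The genuine gap is the one you name yourself: the case $\eta>0$ is never proved. As you correctly observe, the unperturbed reflected waves are only cubic in the strengths, of size $\sim a\,|s_1s_2|\,|s_1+s_2|$, while the $\eta$-dependent correction to $\sigma$ and $\tau$ is expected to be of size $\sim\eta\,|s_1s_2|$ with $\eta$ allowed up to $\varepsilon a$; when $|s_1+s_2|\ll\eta/a$ the correction dominates the $\eta=0$ term, so the conclusion $\sigma<0$, $\tau>0$ stands or falls with the \emph{sign} of that $O(\eta\, s_1s_2)$ contribution. You state that this sign must be computed from the effect of $\eta p_1,\eta p_3$ in~\eqref{e:pq} on the $2$-Hugoniot locus, assert that it is favorable, and then concede in your closing paragraph that this is the main obstacle and that the scaling in $a$ (needed for uniformity over $0<a<1/2$) is also unverified. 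No expansion of the perturbed Rankine--Hugoniot conditions in $\eta$ is carried out and no argument for the sign is given. Since this is precisely the content supplied in~\cite{CaravennaSpinolo}, your proposal is a correct reduction together with an unproven claim at the decisive step, not a complete proof of the lemma; completing it requires the explicit first-order-in-$\eta$ computation of the reflected $1$- and $3$-waves near $U^\sharp$ and the verification that both contributions push toward shocks.
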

\begin{figure}
\begin{center}
\begin{picture}(0,0)%
\includegraphics{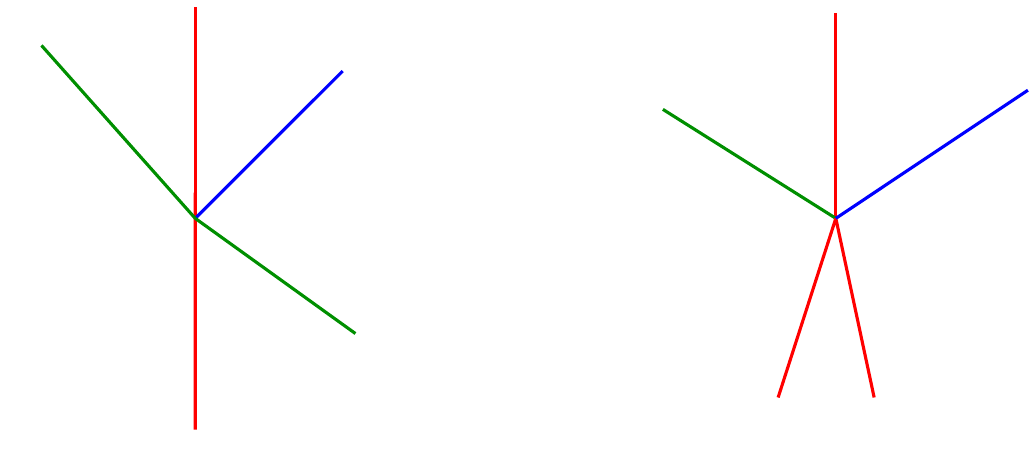}%
\end{picture}%
\setlength{\unitlength}{3947sp}%
\begingroup\makeatletter\ifx\SetFigFont\undefined%
\gdef\SetFigFont#1#2#3#4#5{%
  \reset@font\fontsize{#1}{#2pt}%
  \fontfamily{#3}\fontseries{#4}\fontshape{#5}%
  \selectfont}%
\fi\endgroup%
\begin{picture}(4956,2188)(436,-7951)
\put(3741,-6830){\makebox(0,0)[lb]{\smash{{{}{\color[rgb]{0,0,0}$U_\ell$}%
}}}}
\put(1742,-5938){\makebox(0,0)[lb]{\smash{{{}{\color[rgb]{0,0,0}$U''_m$}%
}}}}
\put(2234,-6000){\makebox(0,0)[lb]{\smash{{{}{\color[rgb]{.259,0,1}$\tau$}%
}}}}
\put(1896,-6553){\makebox(0,0)[lb]{\smash{{{}{\color[rgb]{0,0,0}$U_r$}%
}}}}
\put(2234,-7475){\makebox(0,0)[lb]{\smash{{{}{\color[rgb]{0,.56,0}$\sigma$}%
}}}}
\put(1558,-7659){\makebox(0,0)[lb]{\smash{{{}{\color[rgb]{0,0,0}$U_m$}%
}}}}
\put(1373,-7936){\makebox(0,0)[lb]{\smash{{{}{\color[rgb]{1,0,0}$s$}%
}}}}
\put(728,-7014){\makebox(0,0)[lb]{\smash{{{}{\color[rgb]{0,0,0}$U_\ell$}%
}}}}
\put(912,-5938){\makebox(0,0)[lb]{\smash{{{}{\color[rgb]{0,0,0}$U'_m$}%
}}}}
\put(451,-5938){\makebox(0,0)[lb]{\smash{{{}{\color[rgb]{0,.56,0}$\sigma'$}%
}}}}
\put(1435,-5908){\makebox(0,0)[lb]{\smash{{{}{\color[rgb]{1,0,0}$s'$}%
}}}}
\put(3403,-6215){\makebox(0,0)[lb]{\smash{{{}{\color[rgb]{0,.56,0}$\sigma$}%
}}}}
\put(3864,-7690){\makebox(0,0)[lb]{\smash{{{}{\color[rgb]{1,0,0}$s_1$}%
}}}}
\put(4294,-7844){\makebox(0,0)[lb]{\smash{{{}{\color[rgb]{0,0,0}$U_m$}%
}}}}
\put(4694,-7782){\makebox(0,0)[lb]{\smash{{{}{\color[rgb]{1,0,0}$s_2$}%
}}}}
\put(5032,-6922){\makebox(0,0)[lb]{\smash{{{}{\color[rgb]{0,0,0}$U_r$}%
}}}}
\put(5278,-6031){\makebox(0,0)[lb]{\smash{{{}{\color[rgb]{.259,0,1}$\tau$}%
}}}}
\put(4602,-6246){\makebox(0,0)[lb]{\smash{{{}{\color[rgb]{0,0,0}$U''_m$}%
}}}}
\put(4540,-5908){\makebox(0,0)[lb]{\smash{{{}{\color[rgb]{1,0,0}$s_1+s_2$}%
}}}}
\put(3987,-5969){\makebox(0,0)[lb]{\smash{{{}{\color[rgb]{0,0,0}$U'_m$}%
}}}}
\end{picture}%
\caption{A 1-2 interaction (left) and a 2-2 interaction (right).}
\label{F:2interazione}
\end{center}
\end{figure} 
\subsection{The Riemann problem with well-prepared data}
\label{ss:wp}
In this paragraph we discuss the structure of the solution of Riemann problems with ``well-prepared'' data. More precisely, Lemmas~\ref{l:wp1} and~\ref{l:wp1bis} below state that, under suitable structural assumptions on the constant states $U^{-}$ and $U^{+}$, the solution of the Riemann problem is obtained by juxtaposing three shocks (and hence, in particular, it contains no rarefaction wave). In~\S~\ref{ss:wft:id} we will use these results to discuss the wave-front tracking approximation of the initial datum for a general class of Cauchy problems. 
\begin{lemma}
\label{l:wp1}
There is $0< \varepsilon <1$ such that the following holds.
Fix $U_I \in \R^3$ such that $|U_I| \leq 1/2$. Let $\vec r_{1I}$, $\vec r_{2I}$ and $\vec r_{3I}$ be the vectors 
\begin{equation}
\label{e:ivettori}
 \vec r_{1I} : = \vec r_{1} (U_I), \qquad 
 \vec r_{2I} : = \vec r_{2} (U_{I}), \qquad
 \vec r_{2I} : = \vec r_{3} (U_I). 
\end{equation} 
If $U^-, U^+ \in \R^3$ satisfy 
\begin{equation}
\label{e:hyp1}
 |U^- - U_I | < \varepsilon
\end{equation}
and 
\begin{align}
\label{e:hyp2a}
 | U^+ - U^- + b \vec r_{1I} +b \vec r_{2I} - b \vec r_{2I}| < \varepsilon b 
\end{align}
for some $0<b<\varepsilon$, then the following holds. 
There are $\tau, \sigma$ and $s$ such that 
\begin{equation}
\label{e:soloshock}
 0< \tau < 2b, \quad -2b<\sigma<0, \quad -2b<s<0
\end{equation}
and
\begin{equation}
\label{e:ts}
 U^+ = D_3 \Big[ \tau, D_2 \big[ s, D_1 [\sigma, U^- ]\big]\Big].
\end{equation}
\end{lemma}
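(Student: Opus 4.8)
\emph{Overview and strategy.} Relation~\eqref{e:ts} asserts precisely that $(\sigma,s,\tau)$ is the image of $U^+$ under the inverse of the Lax map that sends a triple of wave strengths to the state reached from $U^-$ by a $1$-, then a $2$-, then a $3$-wave fan. My plan is to invert this map by a quantitative inverse function theorem and then read off the signs in~\eqref{e:soloshock} from its first-order behaviour. (I read~\eqref{e:ivettori} as also setting $\vec r_{3I}:=\vec r_3(U_I)$, and~\eqref{e:hyp2a} as the bound $|U^+-U^-+b\vec r_{1I}+b\vec r_{2I}-b\vec r_{3I}|<\varepsilon b$, which is the reading forced by~\eqref{e:soloshock}.)

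\textbf{Step 1: the wave-fan map.} For $U^-$ with $|U^--U_I|<\varepsilon$ --- hence $|U^-|<1$ since $|U_I|\le 1/2$ --- I would set
\[
 \Phi_{U^-}(\sigma,s,\tau):=D_3\bigl[\tau,\,D_2[s,\,D_1[\sigma,U^-]]\bigr].
\]
By the classical Lax construction of the solution of the Riemann problem, $\Phi_{U^-}$ is $C^1$ near the origin, $\Phi_{U^-}(0,0,0)=U^-$, and $D\Phi_{U^-}(0,0,0)=R(U^-)$, where $R(U):=\bigl[\vec r_1(U)\mid\vec r_2(U)\mid\vec r_3(U)\bigr]$ is the matrix with these columns. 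Strict hyperbolicity~\eqref{e:boundautovalori} and the explicit formulas~\eqref{e:eigenvectors12} for $\vec r_1,\vec r_3$ give that $R(U)$ is invertible with $\norm{R(U)^{-1}}$ bounded uniformly for $|U|<1$, $0\le\eta<1/4$. The quantitative inverse function theorem then yields $r_0>0$, independent of $U^-$ and $\eta$, such that $\Phi_{U^-}\colon B_{r_0}(0)\to\Phi_{U^-}(B_{r_0}(0))$ is a $C^1$ diffeomorphism with uniformly Lipschitz inverse and uniformly continuous $D(\Phi_{U^-}^{-1})$.

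\textbf{Step 2: inversion and signs.} Set $Q:=-\vec r_{1I}-\vec r_{2I}+\vec r_{3I}$. By~\eqref{e:hyp2a} one has $U^+-U^-=bQ+O(\varepsilon b)$ and $|U^+-U^-|\le Cb$ for a uniform $C$; as $b<\varepsilon$, for $\varepsilon$ small the segment $[U^-,U^+]$ stays in $\Phi_{U^-}(B_{r_0}(0))$, so I may put $(\sigma,s,\tau):=\Phi_{U^-}^{-1}(U^+)$. Integrating $D(\Phi_{U^-}^{-1})$ along this segment, and using $\Phi_{U^-}^{-1}(U^-)=0$, $D(\Phi_{U^-}^{-1})(U^-)=R(U^-)^{-1}$, together with the uniform continuity of $D(\Phi_{U^-}^{-1})$, gives
\[
 (\sigma,s,\tau)=R(U^-)^{-1}(U^+-U^-)+O(\kappa(\varepsilon)\,b)=bR(U^-)^{-1}Q+O(\kappa(\varepsilon)\,b),
\]
with $\kappa(\varepsilon)\to 0$ uniformly in the data as $\varepsilon\to 0$. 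Smoothness of the eigenvectors gives $\vec r_i(U^-)=\vec r_{iI}+O(\varepsilon)$, hence $R(U^-)^{-1}=R(U_I)^{-1}+O(\varepsilon)$, while $R(U_I)^{-1}Q=(-1,-1,1)$ by definition of $Q$. Therefore $(\sigma,s,\tau)=(-b,-b,b)+O(\kappa(\varepsilon)b)+O(\varepsilon b)$, and fixing $\varepsilon$ so small that this error has norm $<b/2$ produces $-2b<\sigma<0$, $-2b<s<0$, $0<\tau<2b$, which is~\eqref{e:soloshock}. Finally $\sigma<0$, $s<0$, $\tau>0$ say exactly that the three outgoing waves in~\eqref{e:ts} are a $1$-, a $2$- and a $3$-shock, so the Riemann solution between $U^-$ and $U^+$ contains no rarefaction.

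\textbf{Expected main obstacle.} The substance of the proof is the uniformity of all the constants --- the diffeomorphism radius $r_0$, the modulus $\kappa$, the bound on $\norm{R(U)^{-1}}$ --- as $U_I$ runs over $\{|U_I|\le 1/2\}$, $U^-$ over its $\varepsilon$-neighbourhood, and (if one also wants uniformity in $\eta$, in the spirit of~\S\ref{sss:baj1}) $\eta$ over $[0,1/4)$. This is guaranteed by compactness of the parameter range and by the explicit, uniformly bounded formulas~\eqref{e:pertSyst},~\eqref{e:eigenvectors12} for $F_\eta$ and its eigenvectors; once this uniformity is in hand, $\varepsilon$ is fixed once and for all.
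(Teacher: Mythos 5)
Your argument is correct and follows essentially the same route as the paper's proof: local invertibility of the wave-fan map to define $(\sigma,s,\tau)$ with $|\sigma|+|s|+|\tau|\leq Cb$, a first-order expansion whose remainder is $O(\varepsilon b)$, and comparison with the leading term $(-b,-b,b)$ read off from the hypothesis (resolving the typo in~\eqref{e:ivettori}--\eqref{e:hyp2a} exactly as the paper intends). The only cosmetic difference is that you expand the inverse map along the segment $[U^-,U^+]$, whereas the paper expands the forward composition $D_3[\tau,D_2[s,D_1[\sigma,\cdot]]]$ and compares it with the decomposition $U^+-U^-=-b_1\vec r_{1I}-b_2\vec r_{2I}+b_3\vec r_{3I}$ with $b/2<b_i<3b/2$.
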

\begin{proof}
First, we point out that, if $\varepsilon$ is sufficiently small, then~\eqref{e:hyp2a} implies that 
\begin{equation}
\label{e:bunduetre}
 U^+ - U^- = - b_1 \vec r_{1I} - b_2 \vec r_{2I} + b_3 \vec r_{3I}
\end{equation}
for some $b_1, b_2, b_3$ satisfying
\begin{equation}
\label{e:bunduetre1}
 \frac{1}{2} b < b_1, \, b_2, \, b_3 < \frac{3}{2} b. 
\end{equation}
Next, we use the Local Invertibility Theorem and we determine $\tau, \, s$ and $\sigma$ satisfying~\eqref{e:ts}. Owing to the regularity of the inverse map, we can infer from~\eqref{e:bunduetre} and~\eqref{e:bunduetre1} that 
\begin{equation}
\label{e:pic1}
|\sigma| + |s| + |\tau| < C b.
\end{equation} 
Here and in the rest of the proof, $C$ denotes a universal constant. The precise value of $C$ can vary from line to line.
Next, we recall that the wave fan curve $D_1$ satisfies~\eqref{e:wavecurves13} and we
introduce the notation
\begin{equation}
\label{e:svi1}
 U'_m = D_1 [\sigma, U^-] = U^- + \sigma \vec r_1 (U^-) = U^- +
 \sigma \vec r_{1I} + \sigma \Big[ \vec r_1 (U^-) - \vec r_{1I} \Big]. 
\end{equation}
Also, we term 
\begin{equation}
\label{e:svi2}
 U''_m : = D_2 [s, U'_m] =
 U'_m + s \vec r_{2I}+
 s \Big[ \vec r_2 (U'_m) - \vec r_{2I} \Big]+ 
 \Big[ D_2 [s, U'_m] -U'_m - s \vec r_{2} (U'_m) \Big] 
\end{equation}
By using~\eqref{e:ts} and the explicit expression of the wave fan curve $D_3$ (see~\eqref{e:wavecurves13}) we arrive at 
\begin{equation}
\label{e:sviluppo}
\begin{split}
 U^+ & = U^- + \sigma \vec r_{1I} +
 s \vec r_{2I} + \tau \vec r_{3I} \\
 & \quad +\underbrace{ \sigma \Big[ \vec r_1 (U^-) - \vec r_{1I} \Big]+
 s \Big[ \vec r_2 (U'_m) - \vec r_{2I} \Big]+ 
 \Big[ D_2 [s, U'_m] - U'_m -s \vec r_{2} (U'_m) \Big]+
 \tau \Big[ \vec r_3 (U''_m) - 
 \vec r_{3I} \Big] }_{
 \displaystyle{\mathcal R(\sigma, s, \tau, U^-)}}
 \end{split}
\end{equation}
We recall that $ \vec r_{2} (U'_m)$ is the derivative $d D_2 [s, U'_m]/ ds$ computed at $s=0$. By using~\eqref{e:hyp1},~\eqref{e:pic1}, we obtain that the rest term $\mathcal R$ can be controlled as follows: 
 \begin{equation}
 \label{e:erre1}
 \begin{split}
 |\mathcal R(\sigma, s, \tau, U^-)| & \leq C b (\varepsilon + \varepsilon) +C b^2 + C b \varepsilon \\
 & \leq Cb \varepsilon. 
 \end{split}
 \end{equation} 
 To establish the last inequality, we have use the assumption that $b < \varepsilon$. 
 Next, we compare~\eqref{e:sviluppo} with~\eqref{e:bunduetre} and by using~\eqref{e:erre1} we deduce that 
$$
 |b_1 + \sigma | + |b_2 + s| + |b_3 - \tau | < C \varepsilon b. 
$$ 
Owing to~\eqref{e:bunduetre1}, this implies~\eqref{e:soloshock} provided that $\varepsilon$ is sufficiently small. The proof of the lemma is complete. 
\end{proof}
We only sketch the proof of the following lemma because it is similar to Lemma~\ref{l:wp1}.
Note, furthermore, that Lemma~\ref{l:wp1} can be recovered from Lemma~\ref{l:wp1bis} by taking the limit $\xi \to 0^+$. However, we decided the give the complete statement and proof of Lemma~\ref{l:wp1} to highlight the basic ideas underpinning Lemmas~\ref{l:wp1bis} and~\ref{l:wp1tris}.
\begin{lemma}
\label{l:wp1bis}
There is $0< \varepsilon <1$ such that the following holds.
Fix $U_I \in \R^3$ such that $|U_I| < 1/2$. Let $\vec r_{1I}$, $\vec r_{2I}$ and $\vec r_{3I}$ be the same vectors as in~\eqref{e:ivettori}. 
 Assume that $U_I, U^-, V^-, U^+ \in \R^3$, and $b, \xi \in \R$ satisfy the following conditions: formula~\eqref{e:hyp1} holds and moreover 
 \begin{equation}
\label{e:constraintetaxi}
|V^{-}-U^{-}|< \sqrt{\varepsilon} \frac{ b}{\xi}, \quad 0< b< \varepsilon, \quad 0< \xi< \sqrt{ \varepsilon b}. 
\end{equation} 
Assume furthermore that 
\begin{subequations}
\begin{align}
\label{e:hyp2aCompression1} 
&\text{either} && |U^+ -U^{-}-D_1[- \xi, V^-]+V^{-} + b \vec r_{1I} + b \vec r_{2I} - b \vec r_{3I} |<  b /4
\\
\label{e:hyp2aCompression2} 
&\text{or} && |U^+ -U^{-}- D_2 [-\xi, V^-] +V^{-}+ b \vec r_{1I} + b \vec r_{2I} - b \vec r_{3I}| < b/4
\\
\label{e:hyp2aCompression3}
 &\text{or} && |U^+-U^{-} - D_3 [\xi, V^-] +V^{-}+ b \vec r_{1I} + b \vec r_{2I} - b \vec r_{3I}| < b/4. 
\end{align} 
\end{subequations}
Then~\eqref{e:ts} holds for some $\tau, \, \sigma, \, s$ such that
\begin{subequations}
\label{e:tscompression}
\begin{align}
\label{e:tscompression1} 
 0< \tau < 2b, \quad -2b<s<0, \quad - 2 b - \xi < \sigma < - \xi && 
 \text{if~\eqref{e:hyp2aCompression1} holds} \\
\label{e:tscompression2} 
 0< \tau < 2b, \quad - 2 b - \xi < s < - \xi , \quad - 2b < \sigma < 0 && 
 \text{if~\eqref{e:hyp2aCompression2} holds} \\
\label{e:tscompression3} 
 \xi < \tau < \xi+2 b, \quad -2b<s<0, \quad - 2b < \sigma < 0 && 
 \text{if~\eqref{e:hyp2aCompression3} holds}. 
\end{align}
\end{subequations}
\end{lemma}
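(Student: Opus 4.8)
The plan is to mimic the proof of Lemma~\ref{l:wp1} as closely as possible, treating the extra ``compression'' term $D_i[\pm\xi,V^-]-V^-$ as a small but non-negligible perturbation of size comparable to $\xi$. First I would observe that, since $|V^--U^-|<\sqrt\varepsilon\, b/\xi$ and $|U^--U_I|<\varepsilon$, the vectors $\vec r_j(V^-)$ are close to $\vec r_{jI}$, and more precisely the defect $D_i[\pm\xi,V^-]-V^-\mp\xi\,\vec r_{iI}$ is of order $\xi\cdot\mathrm{dist}+\xi^2$, which under the constraint $\xi<\sqrt{\varepsilon b}$ is $o(\xi)$ plus $O(\sqrt\varepsilon\, b)$ (using $\xi|V^--U^-|<\sqrt\varepsilon\,b$). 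Hence in case~\eqref{e:hyp2aCompression1}, for instance, the hypothesis rewrites as
\begin{equation*}
 \Big| U^+ - U^- + (b+\xi)\vec r_{1I} + b\vec r_{2I} - b\vec r_{3I} \Big| < \tfrac{b}{4} + C\sqrt\varepsilon\, b,
\end{equation*}
so that, exactly as in~\eqref{e:bunduetre}--\eqref{e:bunduetre1}, we get $U^+-U^- = -b_1\vec r_{1I}-b_2\vec r_{2I}+b_3\vec r_{3I}$ with $\tfrac12(b+\xi)<b_1<\tfrac32(b+\xi)$ (more precisely $b_1$ within $Cb(\tfrac14+\sqrt\varepsilon)$ of $b+\xi$) and $\tfrac12 b<b_2,b_3<\tfrac32 b$ (again with the sharper $Cb(\tfrac14+\sqrt\varepsilon)$ bound). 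The cases~\eqref{e:hyp2aCompression2} and~\eqref{e:hyp2aCompression3} are handled identically, shifting $b_2$ resp. $b_3$ by $\xi$ instead.

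Next I would invoke the Local Invertibility Theorem for the composite map $(\sigma,s,\tau)\mapsto D_3[\tau,D_2[s,D_1[\sigma,U^-]]]$ around the origin to get unique small $\sigma,s,\tau$ solving~\eqref{e:ts}, together with the a priori bound $|\sigma|+|s|+|\tau|<C(b+\xi)<Cb$ (using $\xi<\sqrt{\varepsilon b}<b$ for $\varepsilon<1$). Then I would run the same Taylor expansion~\eqref{e:svi1}--\eqref{e:sviluppo}, obtaining
\begin{equation*}
 U^+ = U^- + \sigma\vec r_{1I} + s\vec r_{2I} + \tau\vec r_{3I} + \mathcal R(\sigma,s,\tau,U^-),
\end{equation*}
with the remainder estimate $|\mathcal R|\le C b(\varepsilon+\varepsilon)+Cb^2+Cb\varepsilon\le Cb\sqrt\varepsilon$, exactly as in~\eqref{e:erre1} — here the only change is that I must be slightly more careful since the $b_1$ coefficient is now of size $b+\xi$ rather than $b$, but since $\xi<b$ this only affects constants. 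Comparing with the expression for $U^+-U^-$ from Step~1 yields
\begin{equation*}
 |b_1+\sigma| + |b_2+s| + |b_3-\tau| < C\sqrt\varepsilon\, b,
\end{equation*}
and hence $\sigma\approx -b_1$, $s\approx -b_2$, $\tau\approx b_3$ up to $C\sqrt\varepsilon\,b$ error.

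The final step is to translate these approximate identities into the two-sided bounds~\eqref{e:tscompression}. In case~\eqref{e:hyp2aCompression1}, $b_1$ lies within $Cb(\tfrac14+\sqrt\varepsilon)$ of $b+\xi$, so $\sigma$ lies within $(C(\tfrac14+\sqrt\varepsilon)+C\sqrt\varepsilon)b$ of $-(b+\xi)$; choosing $\varepsilon$ small enough that this total error is below $b$ gives $-2b-\xi<\sigma<-\xi$ (the upper bound $\sigma<-\xi$ needs the error $<b$, which is comfortable; the lower bound $\sigma>-2b-\xi$ likewise). Similarly $s\in(-2b,0)$ and $\tau\in(0,2b)$ from $b_2,b_3\in(\tfrac12 b,\tfrac32 b)$ up to $C\sqrt\varepsilon\,b$. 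Cases~\eqref{e:hyp2aCompression2} and~\eqref{e:hyp2aCompression3} give~\eqref{e:tscompression2} and~\eqref{e:tscompression3} verbatim by the same arithmetic with the $\xi$-shift moved to the $s$- resp. $\tau$-slot; note that in case~\eqref{e:hyp2aCompression3} the shift is $+\xi$ rather than $-\xi$ because $D_3[\xi,\cdot]$ carries a $+\xi$, which is precisely why the conclusion reads $\xi<\tau<\xi+2b$.

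The main obstacle — and the reason the margins in~\eqref{e:hyp2aCompression1}--\eqref{e:hyp2aCompression3} are $b/4$ rather than $\varepsilon b$ — is bookkeeping the constants so that the accumulated error $C\sqrt\varepsilon\,b$ (from the remainder $\mathcal R$) plus $Cb/4$ (from the hypothesis margin) plus $C\xi^2/\text{stuff}$ and $C\xi|V^--U^-|$ (from the compression defect) stays strictly below $b$ uniformly, so that \emph{all three} inequalities in each line of~\eqref{e:tscompression} hold simultaneously; in particular one must check that the upper bound $\sigma<-\xi$ (equivalently $s<-\xi$, $\tau>\xi$) survives, which is the one genuinely new constraint compared to Lemma~\ref{l:wp1} and forces $\xi<\sqrt{\varepsilon b}$ to be assumed so that $\xi$ is much smaller than the $b$-scale error terms. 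I expect all the estimates on $D_i[\pm\xi,V^-]-V^-\mp\xi\vec r_{iI}$ to follow from the $C^2$ regularity of $F_\eta$ and the explicit straight-line form~\eqref{e:wavecurves13} of $D_1,D_3$, so the only real work is the careful choice of $\varepsilon$; I would simply state ``provided $\varepsilon$ is chosen sufficiently small'' at each juncture, exactly as in the proof of Lemma~\ref{l:wp1}.
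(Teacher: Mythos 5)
Your overall strategy coincides with the paper's: rewrite the hypothesis as a perturbation of the hypothesis of Lemma~\ref{l:wp1} with the coefficient in one slot shifted by $\xi$, invoke the Local Invertibility Theorem, Taylor-expand the composite wave-fan map, and compare coefficients. However, there is a genuine quantitative gap, and it originates in the inequality you use to tame $\xi$: you assert $\xi<\sqrt{\varepsilon b}<b$ ``for $\varepsilon<1$''. Since the hypothesis~\eqref{e:constraintetaxi} imposes $b<\varepsilon$, in fact $\sqrt{\varepsilon b}>b$, so $\xi$ may be much \emph{larger} than $b$ (and it is in the intended application of \S~\ref{sss:id:cw}, where $\xi=\mathcal O(1)h_\nu\eta^{-1}$ and $b=\mathcal O(1)\zeta_w h_\nu$, so that $\xi/b=\mathcal O(1)(\eta\zeta_w)^{-1}\gg 1$). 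This false inequality is load-bearing in your error bookkeeping, because you expand everything in the frozen basis $\vec r_{1I},\vec r_{2I},\vec r_{3I}$. On the hypothesis side the compression defect contains $\xi\,[\vec r_i(V^-)-\vec r_{iI}]$, which is only bounded by $\xi|V^--U^-|+\xi|U^--U_I|\leq\sqrt{\varepsilon}\,b+\varepsilon\xi$, and on the expansion side the remainder contains $\sigma[\vec r_1(U^-)-\vec r_{1I}]$ with $|\sigma|\leq C(b+\xi)$, again of size up to $C\varepsilon\xi$; you absorb both into $C\sqrt{\varepsilon}\,b$ (``only affects constants''), but $\varepsilon\xi$ is \emph{not} $\mathcal O(\sqrt{\varepsilon}\,b)$ under the stated hypotheses: take $b=\varepsilon^{10}$, $\xi=\tfrac12\sqrt{\varepsilon b}=\tfrac12\varepsilon^{11/2}$ and $|U^--U_I|$ of order $\varepsilon$ (allowed by~\eqref{e:hyp1}); then $\varepsilon\xi/b\to\infty$ as $\varepsilon\to0$. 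Since the conclusion $-2b-\xi<\sigma<-\xi$ in~\eqref{e:tscompression1} requires the total error in $|\sigma+b+\xi|$ to be strictly below $b$, an uncontrolled term of size $\varepsilon\xi$ breaks the final comparison, precisely in the regime where the lemma is later used.

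The paper's proof avoids this by never multiplying $\xi$ against $|U^--U_I|$: both the hypothesis and the expansion of~\eqref{e:ts} are written in the basis $\vec r_1(U^-),\vec r_2(U^-),\vec r_3(U^-)$, so the only $\xi$-weighted errors are $\xi\,|\vec r_2(V^-)-\vec r_2(U^-)|\leq\xi|V^--U^-|<\sqrt{\varepsilon}\,b$ — exactly what~\eqref{e:constraintetaxi} is designed to control — and the second-order term $\xi^2<\varepsilon b$, while the frozen vectors $\vec r_{jI}$ appear only multiplied by $b$, contributing $\mathcal O(\varepsilon b)$; see~\eqref{e:restoerreuno},~\eqref{e:erreunopiccolo} and~\eqref{e:erreduepiccolo}. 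Your argument can in principle be repaired, because the two dangerous contributions nearly cancel once the $\xi$-terms on the two sides are paired before changing basis; but carrying this out rigorously amounts to redoing the computation in the $U^-$-basis, i.e.\ to the paper's proof. As written, the bookkeeping does not close.
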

\begin{proof}
We only consider the case when~\eqref{e:tscompression2} holds since the analysis of the other cases is analogous, but simpler. We first rewrite~\eqref{e:hyp2aCompression2} as
\begin{equation}
\label{e:pocopoco}
 \Big| U^+ - U^{-} 
 + b \vec r_{1}(U^{-}) + (b+\xi) \vec r_{2}(U^{-}) - b \vec r_{3}(U^{-})
 + \mathcal R_1 (\xi, U_{I}, U^-, V^{-}) \Big| \leq  b/4,
\end{equation}
where the term $ \mathcal R_1$ is defined by setting 
\begin{equation}
\label{e:restoerreuno}
\begin{split}
\mathcal R_1 (b, \xi, U_{I}, U^-, V^{-}) : = & 
 b \Big[ \vec r_{2I}-\vec r_{2}(U^{-}) \Big] +b \Big[ \vec r_{1I}-\vec r_{1}(U^{-}) \Big] 
 +b \Big[ \vec r_{3I}-\vec r_{3}(U^{-}) \Big] \\
 & - \Big[ D_2 [-\xi, V^-] -V^{-}+\xi \vec r_{2}(V^{-}) \Big]
 +\xi \Big[ \vec r_{2}(V^{-})-\vec r_{2}(U^{-}) \Big]. \\
 \end{split}
\end{equation}
Owing to~\eqref{e:hyp1} and~\eqref{e:constraintetaxi}, it satisfies 
\begin{equation}
\label{e:erreunopiccolo}
 | \mathcal R_1 (b, \xi, U_{I}, U^-, V^{-}) | \leq C \varepsilon b + C \xi^2 + 
 C \xi |V^{-}-U^{-}|
 \leq C \sqrt{ \varepsilon} b
\end{equation}
Here and in the rest of the proof, $C$ denotes a universal constant. Its precise value can vary from line to line. 
Next, we use the Local Invertibility Theorem to determine $\tau, \, s$ and $\sigma$ satisfying~\eqref{e:ts}. Owing to the regularity of the inverse map, we have 
\begin{equation}
\label{e:bicsi}
|\sigma| + |s| + |\tau| < C (b + \xi). 
\end{equation} 
We define $U'_m$ and $U''_m$ as~\eqref{e:svi1} and~\eqref{e:svi2} and by arguing as in the proof of Lemma~\ref{l:wp1} we conclude that~\eqref{e:ts} implies
\begin{equation}
\label{e:esattoesatto}
\begin{split}
 U^+ & = U^- + \sigma \vec r_{1}(U^-) +
 s \vec r_{2} (U^-) + \tau \vec r_{3} (U^-) \\
 & \quad +\underbrace{ 
 s \Big[ \vec r_2 (U'_m) - \vec r_{2}(U^-) \Big]+ 
 \Big[ D_2 [s, U'_m] - U'_m -s \vec r_{2} (U'_m) \Big]+
 \tau \Big[ \vec r_3 (U''_m) - 
 \vec r_{3}(U^-) \Big] }_{
 \displaystyle{\mathcal R_2(\sigma, s, \tau, U^-)}}
 \end{split}
\end{equation}
By using~\eqref{e:bicsi} we obtain
\begin{equation}
\label{e:erreduepiccolo}
 | \mathcal R_2(\sigma, s, \tau, U^-)| \leq C (b +\xi)^2
\end{equation}
Finally, we compare~\eqref{e:pocopoco} and~\eqref{e:esattoesatto} and we use~\eqref{e:erreunopiccolo} and~\eqref{e:erreduepiccolo} and we obtain
\begin{equation}
\label{e:cisiamo}
 | \sigma + b| + |s + b + \xi| + |\tau - b| \leq b/4+C \sqrt{\varepsilon} b + C (b + \xi)^2.
\end{equation}
By using the inequality $\xi^2 \leq \varepsilon b $, we eventually arrive at~\eqref{e:tscompression2}. 
\end{proof}
By arguing as in the proof of Lemma~\ref{l:wp1bis}, we establish the following result. Note that Lemmas~\ref{l:wp1} and~\ref{l:wp1bis} can be both recovered as particular cases of Lemma~\ref{l:wp1tris}. 
\begin{lemma}
\label{l:wp1tris}
There is $0< \varepsilon <1$ such that the following holds.
Let $\mathfrak m$ be a Borel probability measure on $\R$.
Fix $U_I \in \R^3$ such that $|U_I| < 1/2$. Let $\vec r_{1I}$, $\vec r_{2I}$ and $\vec r_{3I}$ be the same vectors as in~\eqref{e:ivettori}. 
Fix $U^-, U^+ \in \R^3$ and assume that  
\begin{equation}
\label{e:hyp1bis}
|U^- - U_I | < \varepsilon.
\end{equation}
Assume, furthermore, that the 
 the functions 
$$
    V^- : \R \to \R^3, \qquad 
    \widetilde b, \widetilde \xi, \widetilde \xi_2, \widetilde \xi_3:  \R \to [0, + \infty[.       
$$
satisfy the following conditions for $\mathfrak m$-a.e.~$z \in \R$:
 \begin{subequations}
\begin{gather}
\label{e:constraintetaxitris}
 0\leq \widetilde b(z)< \varepsilon, \qquad 0\leq \widetilde \xi_{i}(z)< \sqrt{ \varepsilon \widetilde b(z)}\ \text{ for }i=1,2,3, \\
\label{e:contraintxi}
 \left[\widetilde \xi_{1}(z)+\widetilde \xi_{2}(z)+\widetilde \xi_{3}(z)\right]|\widetilde V^{-}(z)-U^{-}|< \sqrt{\varepsilon} \,\widetilde b(z). 
\end{gather} 
Finally, set 
\begin{align}
\label{e:integrali}
&&b=\int_\R  \widetilde b(z)d \mathfrak m(z) ,
&& \xi_{1}=\int_\R \widetilde \xi_{1}(z)d\mathfrak m(z) ,
&& \xi_{2}=\int_\R  \widetilde \xi_{2}(z)d\mathfrak m(z) ,
&& \xi_{3}=\int_\R  \widetilde \xi_{3}(z)d\mathfrak m(z) .
\end{align}
and 
assume that 
\begin{gather}
\label{e:hyp2aCompression1bis} 
\begin{split}
 \left|U^+ -U^{-}- \int_\R \left\{D_3 \Big[ \widetilde \xi_{3}(z), D_2 \big[ - 
 \widetilde \xi_{2}(z), D_1 [- \widetilde \xi_{1}(z), \widetilde V^- (z)]
 \big]\Big] - \widetilde V^- (z)\right\}d\mathfrak m(z)
\right.\quad&\\\left.\phantom{\Big[}
+ b \vec r_{1I} + b \vec r_{2I} -  b \vec r_{3I}\right|& < b / 4 . \end{split}
\end{gather} 
\end{subequations}
Then~\eqref{e:ts} holds for some $\tau, \, \sigma, \, s$ such that
\begin{align}
\label{e:tscompression1bis} 
& - 2 b -\xi_{1} \leq \sigma \leq - \xi_{1} , && - 2 b - \xi_{2} \leq s \leq - \xi_{2}, && \xi_{3} \leq \tau \leq  \xi_{3}+ 2b.
\end{align}
\end{lemma}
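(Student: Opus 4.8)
The plan is to run a measure-theoretic version of the proof of Lemma~\ref{l:wp1bis}. The key reduction is the following: after integration against $\mathfrak m$, the integral term in~\eqref{e:hyp2aCompression1bis} equals $-\xi_1 \vec r_1(U^-)-\xi_2 \vec r_2(U^-)+\xi_3 \vec r_3(U^-)$ up to an error of order $\sqrt\varepsilon\, b$. Granting this, and replacing the eigenvectors evaluated at $U_I$ by those evaluated at $U^-$ (which costs $\leq C b\,|U^--U_I| < C\varepsilon b$ by~\eqref{e:hyp1bis}), condition~\eqref{e:hyp2aCompression1bis} becomes
\[
 \Big| U^+-U^- +(b+\xi_1)\vec r_1(U^-)+(b+\xi_2)\vec r_2(U^-)-(b+\xi_3)\vec r_3(U^-) \Big| < b/4 + C\sqrt\varepsilon\, b,
\]
which has exactly the shape of the well-prepared condition treated in Lemma~\ref{l:wp1bis}, with $b$ replaced by $b+\xi_1$, $b+\xi_2$, $b+\xi_3$ in the three coefficients; from here the Local Invertibility Theorem and the comparison argument of that proof deliver~\eqref{e:tscompression1bis}. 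Throughout, $C$ denotes a universal constant that may change from line to line.

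The first step is a pointwise Taylor expansion. Fixing $z\in\R$, consider the map $(\sigma_1,\sigma_2,\sigma_3)\mapsto D_3[\sigma_3, D_2[-\sigma_2, D_1[-\sigma_1, \widetilde V^-(z)]]]$. Since $D_1$ and $D_3$ are straight lines by~\eqref{e:wavecurves13} and $D_2$ is of class $C^2$ with derivatives uniformly bounded on $\{|U|\le 1\}$, this map is $C^2$, takes the value $\widetilde V^-(z)$ at the origin, and has first-order partial derivatives $-\vec r_1(\widetilde V^-(z))$, $-\vec r_2(\widetilde V^-(z))$, $\vec r_3(\widetilde V^-(z))$ there; substituting $\vec r_i(U^-)$ for $\vec r_i(\widetilde V^-(z))$ then yields
\[
 D_3 \Big[ \widetilde \xi_{3}(z), D_2 \big[ - \widetilde \xi_{2}(z), D_1 [- \widetilde \xi_{1}(z), \widetilde V^- (z)] \big]\Big] - \widetilde V^- (z)
 = - \widetilde\xi_1(z)\vec r_1(U^-) - \widetilde\xi_2(z)\vec r_2(U^-) + \widetilde\xi_3(z)\vec r_3(U^-) + \mathcal E(z),
\]
with $|\mathcal E(z)| \le C\big[\widetilde\xi_1(z)+\widetilde\xi_2(z)+\widetilde\xi_3(z)\big]^2 + C\big[\widetilde\xi_1(z)+\widetilde\xi_2(z)+\widetilde\xi_3(z)\big]\,|\widetilde V^-(z)-U^-|$. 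I would then integrate this identity against $\mathfrak m$: by~\eqref{e:integrali} the main term becomes $-\xi_1\vec r_1(U^-)-\xi_2\vec r_2(U^-)+\xi_3\vec r_3(U^-)$, while the error is controlled by the two structural constraints. Indeed, by $\widetilde\xi_i(z)^2<\varepsilon\widetilde b(z)$ (from~\eqref{e:constraintetaxitris}) the first summand integrates to at most $C\varepsilon b$, and by~\eqref{e:contraintxi} the second integrates to at most $\sqrt\varepsilon\, b$; since $b<\varepsilon<1$ this gives a total error of order $\sqrt\varepsilon\, b$. I would also record that, by Jensen's inequality applied to $t\mapsto\sqrt t$, $\xi_i=\int_\R\widetilde\xi_i\,d\mathfrak m<\int_\R\sqrt{\varepsilon\widetilde b(z)}\,d\mathfrak m(z)\le\sqrt{\varepsilon b}$, so that $\xi_i^2<\varepsilon b$; this makes every quadratic-in-$\xi_i$ term below harmless.

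With the displayed reformulation of~\eqref{e:hyp2aCompression1bis} in hand, in particular $|U^+-U^-|\le C(b+\xi_1+\xi_2+\xi_3)$ is as small as desired, so the Local Invertibility Theorem produces a unique triple $(\sigma,s,\tau)$ near the origin satisfying~\eqref{e:ts}, with $|\sigma|+|s|+|\tau|\le C(b+\xi_1+\xi_2+\xi_3)$ and, by a second-order Taylor expansion of $(\sigma,s,\tau)\mapsto D_3[\tau, D_2[s, D_1[\sigma, U^-]]]$ around the origin,
\[
 U^+-U^- = \sigma\vec r_1(U^-)+s\vec r_2(U^-)+\tau\vec r_3(U^-)+\mathcal R, \qquad |\mathcal R|\le C(|\sigma|+|s|+|\tau|)^2\le C\varepsilon b,
\]
the last bound using $b<\varepsilon$ and $\xi_i^2<\varepsilon b$. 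Comparing this with the reformulation of~\eqref{e:hyp2aCompression1bis} and invoking the linear independence of $\vec r_1(U^-),\vec r_2(U^-),\vec r_3(U^-)$, uniform for $|U^-|\le 1$ by strict hyperbolicity — exactly as in the concluding estimate of the proof of Lemma~\ref{l:wp1bis} — I would obtain $|\sigma+b+\xi_1|+|s+b+\xi_2|+|\tau-b-\xi_3|<b$ provided $\varepsilon$ is small enough, which is precisely~\eqref{e:tscompression1bis}.

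The main obstacle is the bookkeeping in the first two steps. Unlike in Lemmas~\ref{l:wp1} and~\ref{l:wp1bis}, the auxiliary state $\widetilde V^-(z)$ is not required to be uniformly close to $U^-$: it is only close in the $\mathfrak m$-averaged, amplitude-weighted sense~\eqref{e:contraintxi}, and for $\mathfrak m$-many $z$ it may be genuinely far from $U^-$. Hence one cannot Taylor-expand ``uniformly around $U^-$''; the point is that $\widetilde V^-(z)$ always enters multiplied by the small compression amplitudes $\widetilde\xi_i(z)$, so that the only quantity in which it survives the estimate is the product $\big[\widetilde\xi_1+\widetilde\xi_2+\widetilde\xi_3\big]\,|\widetilde V^--U^-|$ — which is exactly what~\eqref{e:contraintxi} bounds by $\sqrt\varepsilon\,\widetilde b$. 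Once this is absorbed correctly, together with the elementary remark that $\xi_i<\sqrt{\varepsilon b}$, the remainder of the argument is the routine perturbative computation already carried out for Lemma~\ref{l:wp1bis}.
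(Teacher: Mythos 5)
Your proof is correct and is essentially the argument the paper intends: the paper establishes this lemma only by remarking that one argues as in Lemma~\ref{l:wp1bis}, and your write-up is precisely that adaptation (Taylor-expand the integrand around $\widetilde V^-(z)$, replace the eigenvectors by $\vec r_i(U^-)$, integrate against $\mathfrak m$ and control the resulting error by $\mathcal O(1)\sqrt{\varepsilon}\,b$ via \eqref{e:constraintetaxitris}--\eqref{e:contraintxi}, then conclude with the Local Invertibility Theorem and the same comparison step as in Lemma~\ref{l:wp1bis}). Your additional remark that Jensen/Cauchy--Schwarz yields $\xi_i\le\sqrt{\varepsilon b}$, hence $(b+\xi_1+\xi_2+\xi_3)^2=\mathcal O(1)\,\varepsilon b$, correctly supplies the analogue of the bound $\xi^2\le\varepsilon b$ invoked at the end of the proof of Lemma~\ref{l:wp1bis}.
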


\section{Construction of the counter-example}
\label{S:step1}
In this section we start the construction of the set of ``bad data'' $\mathfrak B$ as in the statement of Theorem~\ref{T:main}. In other words, we want to construct 
$\mathfrak B$ in such a way that i) $\mathfrak B$ is open in the $\mathcal S (\R)$ topology and ii) for every initial datum in $\mathfrak B$ the solution of the Cauchy problem develops infinitely many shocks in a compact set. 
Loosely speaking, we will construct $\mathfrak B$ as a ball 
(in a functional space) centered at a particular function. 
What we actually do in this section is hence to construct an initial datum $\widetilde U$ satisfying the following requirements: first, the solution of the Cauchy problem with initial datum $\widetilde U$ develops infinitely many shocks. Second, this behavior is robust with respect to sufficiently small perturbations in the Sobolev space $W^{1 \infty}(\R)$. As we will see in~\S~\ref{ss:proofteo}, this is the key step to establish Theorem~\ref{T:main}. 
To construct $\widetilde U$ we proceed according to the following steps.
\begin{itemize}
\item[\S~\ref{ss:bwp}:] we go over the construction of a wave pattern with infinitely many shocks. This construction is basically the same as in~\cite{BaJ}.
\item[\S~\ref{ss:sc}:] we show that this wave pattern can be obtained from a Lipschitz continuous initial datum. However, this does not conclude the construction of $\widetilde U$. Indeed, at the beginning of~\S~\ref{ss:tildeu} we explain that in principle it it may happen that, if we take a very small perturbation of the initial datum, the solution of the Cauchy problem does no more develop infinitely many shocks. In other words, the wave pattern constructed in~\S~\ref{ss:bwp} and~\S~\ref{ss:sc} is not \emph{robust} with respect to perturbations. 
\item[\S~\ref{ss:tildeu}:] we modify the construction given in \S~\ref{ss:bwp} and in \S~\ref{ss:sc} in order to make it robust with respect to perturbations. We eventually obtain an initial datum $\widetilde U$ and Proposition~\ref{p:perturbation} states that 
the solution of the Cauchy problem with initial datum $\widetilde U$ develops infinitely many shocks and that this behavior is robust with respect to sufficiently small $W^{1 \infty}$ perturbations. The proof of Proposition~\ref{p:perturbation} is provided in~\S~\ref{s:pprop}. 
\end{itemize}
In the rest of the present section we always assume that the parameter $\eta$ in~\eqref{e:pertSyst} is sufficiently small to have that Lemma~\ref{L:ie} applies. 
 \subsection{A wave pattern with infinitely many shocks}
\label{ss:bwp} 
In this paragraph we exhibit a wave pattern containing infinitely many shocks. The construction is basically the same as in~\cite{BaJ}, however we recall it 
for the reader's convenience . 
\begin{lemma} 
\label{l:infty}
Fix $q>0$ and assume that $U_I$, $U_{I\!I}$, $U_{I\!I\!I} \in \R^3$ satisfy the following properties:
\begin{enumerate}
\item \label{ite:1inBaJindata} the solution of the Riemann problem between $U_I$ (on the left) and $U_{I\!I}$ (on the right)
contains 3 shocks and the strength of each shock is smaller than $1/4$. 
\item \label{ite:2inBaJindata} The solution of the Riemann problem between $U_{I\!I}$ (on the left) and $U_{I\!I\!I}$ (on the right)
contains 3 shocks and {the strength of each shock is smaller than $1/4$.}
\item \label{ite:3inBaJindata} The following chain of inequalities holds true: $v_I > v_{I\!I} > v_{I\!I\!I}$. 
\end{enumerate}
Then the admissible solution of the Cauchy problem obtained by coupling~\eqref{e:cl2} with the initial datum \begin{equation}
\label{e:W}
 W(x): =
\left\{
 \begin{array}{ll}
 U_I & x < - q \\
 U_{I\!I} & - q< x < q \\
 U_{I\!I\!I} & x> q. \\
 \end{array}
 \right.
\end{equation}
contains infinitely many shocks. 
\end{lemma}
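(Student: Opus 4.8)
The strategy I would adopt is the one of Baiti--Jenssen~\cite{BaJ}: since all interactions will turn out to produce only shocks, the admissible solution is piecewise constant and can be built explicitly by juxtaposing exact Riemann solutions, and I would exhibit a cascade of interactions creating infinitely many distinct shocks accumulating at a single point. First, by Lemma~\ref{L:v} the second component $v$ is the entropy solution of $\pt v + \px[v^2]=0$ with the decreasing step datum $v_I > v_{I\!I} > v_{I\!I\!I}$; hence its solution is made of two shocks, of speeds $v_I + v_{I\!I}$ and $v_{I\!I} + v_{I\!I\!I}$, and since $v_I > v_{I\!I\!I}$ the left one is the faster, so the two merge at the finite time $\widetilde T := 2q/(v_I - v_{I\!I\!I})$ at a point $x^\ast$. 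Because the $1$- and $3$-wave fan curves keep $v$ constant (see~\S~\ref{sss:baj2}), all the $v$-variation of the full solution is carried by $2$-waves; thus in the system the two jumps of $v$ are carried by two $2$-shocks, whose trajectories $\gamma_\ell, \gamma_r$ leave $(0,-q)$ and $(0,q)$ and meet precisely at $(\widetilde T, x^\ast)$, enclosing a compact triangular region $K$.

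Next I would resolve the two Riemann problems at $x=\pm q$: by the assumptions that each of them splits into three shocks of strength less than $1/4$, the $2$-shocks so obtained are exactly $\gamma_\ell$ and $\gamma_r$. At time $0^+$ the only waves inside $K$ are then the $3$-shock issued at $(0,-q)$ and the $1$-shock issued at $(0,q)$; the $1$- and $3$-shocks released outside $K$ move away from it, and by~\S~\ref{sss:1133} same-family shocks simply merge, so the solution is well defined there. Inside $K$ I would track the pattern using the interaction analysis of~\S~\ref{sss:13},~\S~\ref{sss:1133} and Lemma~\ref{L:ie} together with its $2$--$3$ analogue: a $1$-shock crossing a $3$-shock produces no new wave; when a $3$-shock reaches $\gamma_r$ from its left it is transmitted across (and then exits $K$ to the right, never to return) while a small new $1$-shock is created on the left of $\gamma_r$ and re-enters $K$; symmetrically, when a $1$-shock reaches $\gamma_\ell$ from its right it is transmitted (exiting $K$ to the left) while a small new $3$-shock is created and re-enters $K$. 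In particular no rarefaction is ever produced, so the solution is piecewise constant and, being obtained by juxtaposing entropy-admissible Riemann solutions consistently resolved at every interaction, is the admissible solution of the Cauchy problem.

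Iterating, a single front shuttles back and forth between $\gamma_\ell$ and $\gamma_r$, and each bounce off one of the two $2$-shocks creates one brand-new shock curve, born at a strictly later time than the previous one. By~\eqref{e:boundautovalori} a $1$-shock travels at speed $< -5/2$ and a $3$-shock at speed $>3$, whereas $\gamma_\ell$ and $\gamma_r$ move at speeds $v_I + v_{I\!I}$ and $v_{I\!I}+v_{I\!I\!I}$, both in $(-2,2)$; hence the shuttling front closes on each $2$-shock at a speed bounded below, while the width of the region between $\gamma_\ell$ and $\gamma_r$, equal to $(v_I - v_{I\!I\!I})(\widetilde T - t)$, shrinks to $0$ as $t \uparrow \widetilde T$. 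Therefore the bounce times $t_1 < t_2 < \dots$ all satisfy $t_n < \widetilde T$, only finitely many fall in any strip $\{t \le \widetilde T - \epsilon\}$, and $t_n \uparrow \widetilde T$. So the solution develops countably many distinct shocks, all contained in the compact set $K$, which proves the lemma (and incidentally furnishes the compact set needed for Theorem~\ref{T:main}).

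The step I expect to be the real obstacle is the quantitative control of the cascade: one must verify that the strengths of the shocks created at successive bounces stay within the range ($\le 1/4$, with $\eta$ small) in which Lemma~\ref{L:ie} and its $2$--$3$ analogue apply, so that every interaction keeps resolving into three shocks and the piecewise constant structure survives up to $\widetilde T$. This is exactly where the explicit algebraic form of the Baiti--Jenssen flux is used: the exact interaction formulas (sharper than the crude bounds $\frac{1}{100}s\sigma < \tau < 10 s\sigma$ of Lemma~\ref{L:ie}) show that at each bounce the newly created shock is a strict contraction of the incoming one, so the strengths decay geometrically along the cascade, remain small, and sum to a finite quantity, consistently with the uniform total variation bound. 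A secondary, routine point is checking that the fronts released outside $K$, which are all of families $1$ and $3$, never feed anything back into $K$, so that the cascade is genuinely self-contained.
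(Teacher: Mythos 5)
Your proposal is correct and follows essentially the same route as the paper: the two 2-shocks issued at $x=\pm q$ approach and meet at $\widetilde T = 2q/(v_I - v_{I\!I\!I})$, and the 1- and 3-shocks bounce back and forth between them, each reflection producing a new shock by Lemma~\ref{L:ie} (and its 2--3 analogue), while 1--3 crossings and same-family mergers create no new waves by \S~\ref{sss:13}--\S~\ref{sss:1133}. The quantitative cascade control you flag as the main obstacle is not carried out in the paper's proof either: it simply applies Lemma~\ref{L:ie} repeatedly, the essential points being that the 2-shock strengths are unchanged ($s'=s$) and the reflected shocks have strength of order $s\sigma$, hence only get weaker along the cascade.
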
 
We refer to Figure~\ref{F:infty} for a representation of the wave pattern contained in the solution of the Cauchy problem obtained by coupling~\eqref{e:cl2} with the initial datum $W$. 
\begin{proof} 
First, we observe that, owing to property~\eqref{ite:3inBaJindata} in the statement of Lemma~\ref{l:infty}, 
\begin{equation}
\label{e:2speed}
 \mathrm{speed}_2[U_{I},U_{I\!I}]= v_I + v_{I\!I} > v_{I\!I} + v_{I\!I\!I} = \mathrm{speed}_2[U_{I\!I},U_{I\!I\!I}]
\end{equation}
In the previous expression, we denote by $\mathrm{speed}_2[U_{I},U_{I\!I}]$ the speed of the 2-shock in the solution of the Riemann problem between $U_I$ (on the left) and $U_{I\!I}$ (on the right). 
In other words, the $2$-shock that is generated at the point $(t, x)=(0, -q)$ is faster than the $2$-shock that is created at the point $(t, x)=(0, q)$ (see 
Figure~\ref{F:infty}). 

Next, we observe that the first interaction that occurs is 
the interaction between the 3-shock generated at $x=-q$ and the 1-shock generated at $x=q$, see again Figure~\ref{F:infty}. Owing to the analysis in \S~\ref{sss:13}, those two shocks essentially cross each other and, most importantly, no 2-wave is generated. After this interaction, the 1-shock generated at $x=q$ interacts with the 2-shock generated at $x=-q$. Owing to Lemma~\ref{L:ie}, this interaction produces three outgoing shocks and 
 the speed of the outgoing 2-shock is the same as the speed of the incoming 2-shock, which is the left hand side of~\eqref{e:2speed}. Also, the new 1-shock generated at this interaction will hit at some later time the left 2-shock: owing to Lemma~\ref{L:ie}, this interaction produces three outgoing shocks. The new 3-shock will then interact with the right 2-shock, producing three outgoing shocks. 
 This mechanism is repeated infinitely many times between $t=0$ and the time $t = \widetilde T$ at which the 2-shocks generated at $x=-q$ and $x=q$ interact, namely 
\begin{equation}
\label{e:T}
 \widetilde T = \frac{2q}{v_{I} - v_{I\!I\!I}}. 
\end{equation}
Note that, in general, owing to the nonlinearity, it may also happen 
that for instance two 3-shocks interact at some point on the right of the right 2-shock. However, owing to~\S~\ref{sss:1133}, these two shocks simply merge and no 2- or 3-waves are generated. 
\end{proof}
\begin{figure}
\begin{center}
\begin{picture}(0,0)%
\includegraphics{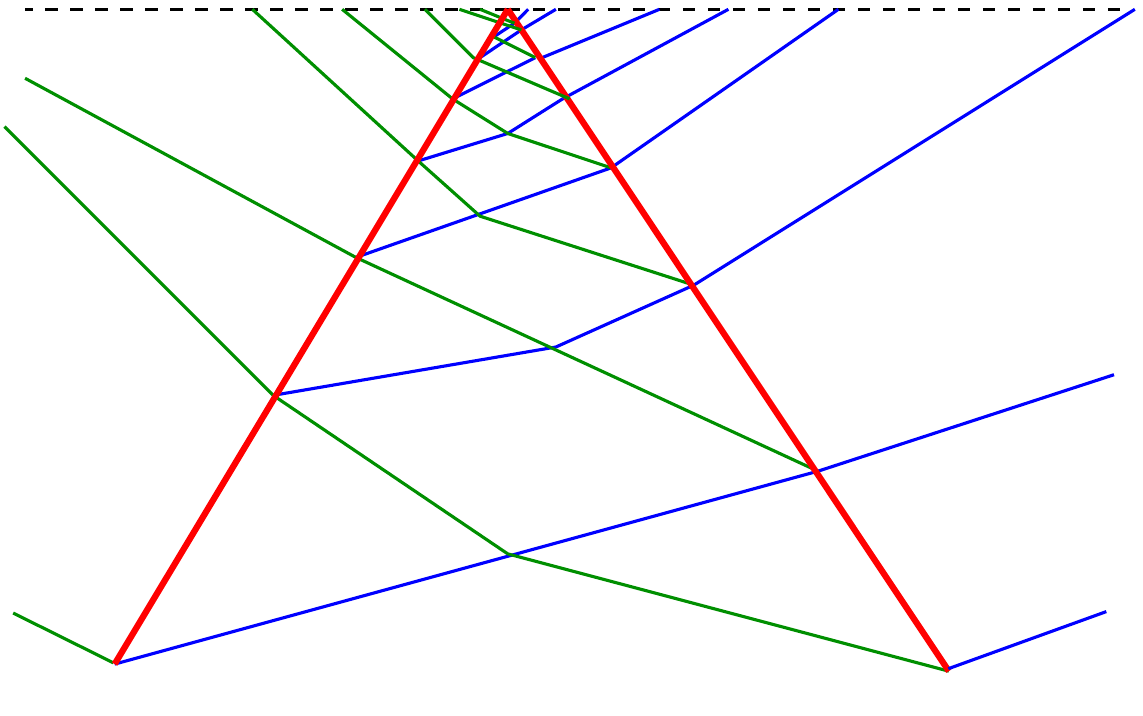}%
\end{picture}%
\setlength{\unitlength}{3947sp}%
\begingroup\makeatletter\ifx\SetFigFont\undefined%
\gdef\SetFigFont#1#2#3#4#5{%
  \reset@font\fontsize{#1}{#2pt}%
  \fontfamily{#3}\fontseries{#4}\fontshape{#5}%
  \selectfont}%
\fi\endgroup%
\begin{picture}(5470,3421)(462,-8582)
\put(981,-8536){\makebox(0,0)[lb]{\smash{{{}{$-q$}%
}}}}
\put(4983,-8536){\makebox(0,0)[lb]{\smash{{{}{$q$}%
}}}}
\end{picture}%
\caption{The solution of the Cauchy problem obtained by coupling system~\eqref{e:cl2} with the initial datum~\eqref{e:W}}\label{F:infty}
\end{center}
\end{figure} 
\subsection{Shock creation analysis}
\label{ss:sc}
This paragraph aims at showing that the wave pattern in Figure~\ref{F:infty} can be exhibited by a solution starting from a Lipschitz continuous initial datum. More precisely, we establish the following result. 
\begin{lemma}
\label{l:startlip} There is a sufficiently small constant $\varepsilon > 0$ such that the following holds. Fix $q =20$, and $U_I \in \R^3$ such that $|U_I| < 1/2$. Let $\omega \in \R$ satisfy $0 < \omega < \varepsilon$ and let $U_{I\!I}$ and $U_{I\!I\!I}$ be the states defined as follows:
\begin{subequations}
\label{e:uduetre}
\begin{equation}
 U_{I\!I} : = D_3 \Big[ \omega, D_2 \big[ - \omega, D_1 [- \omega, U_I] \big] \Big]
\end{equation} 
and 
\begin{equation}
 U_{I\!I\!I} : = D_3 \Big[ \omega, D_2 \big[ - \omega, D_1 [- \omega, U_{I\!I} ] \big] \Big]
\end{equation} 
\end{subequations}
Then the states $U_I$, $U_{I\!I}$ and $U_{I\!I\!I}$ satisfy the hypotheses of Lemma~\ref{l:infty}. Also, there is a Lipschitz continuous initial datum such that the solution $U$ of the Cauchy problem obtained by coupling~\eqref{e:cl2} with this initial datum satisfies $U(1, x) = W(x)$, where $W$ is the same as in~\eqref{e:W}. 
\end{lemma}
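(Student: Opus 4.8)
The plan is to split the statement into two parts: (a) verifying that the states $U_I$, $U_{I\!I}$, $U_{I\!I\!I}$ defined by~\eqref{e:uduetre} satisfy the hypotheses of Lemma~\ref{l:infty}; and (b) constructing a Lipschitz continuous initial datum whose admissible solution equals $W$ at time $t=1$. For part (a), the three hypotheses of Lemma~\ref{l:infty} are checked as follows. The Riemann problem between $U_I$ and $U_{I\!I}$ is, by the very definition of $U_{I\!I}$, solved by the juxtaposition of a $1$-wave, a $2$-wave and a $3$-wave with signed strengths $-\omega$, $-\omega$, $\omega$; by the sign conventions recorded after~\eqref{e:wavecurves13} (for the $1$- and $3$-families) and after Lemma~\ref{L:v} (for the $2$-family) these are all \emph{shocks}, and each has strength $\omega<\varepsilon<1/4$. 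The same argument applies verbatim to the Riemann problem between $U_{I\!I}$ and $U_{I\!I\!I}$. Finally, since a $2$-shock decreases the second component (again by the convention after Lemma~\ref{L:v}) and the $1$- and $3$-wave fan curves leave $v$ unchanged (see~\eqref{e:wavecurves13}), we get $v_{I\!I} = v_I - \omega$ and $v_{I\!I\!I} = v_{I\!I} - \omega = v_I - 2\omega$, so that $v_I > v_{I\!I} > v_{I\!I\!I}$, which is hypothesis~\eqref{ite:3inBaJindata}. Note that one should also check $|U_{I\!I}|, |U_{I\!I\!I}| < 1$, which follows from $|U_I|<1/2$ and $\omega$ small since $|U_{I\!I\!I}-U_I| \leq C\omega$; this is needed so that the bounds~\eqref{e:boundautovalori} and the interaction lemmas remain applicable throughout.

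For part (b), the idea is to run the three shocks in $W$ \emph{backwards in time} from $t=1$ to $t=0$ and to replace each backward shock by a \emph{compression wave}, i.e.\ a Lipschitz profile of the relevant family whose characteristics focus at time $t=1$ onto the point where the shock is located, producing exactly that shock at $t=1$. Concretely, at $t=1$ the function $W$ has a jump at $x=-q$ resolved by a $1$-shock, a $2$-shock and a $3$-shock emanating from $(1,-q)$, and similarly at $x=-q$... rather, the jump of $W$ at $x=-q$ between $U_I$ and $U_{I\!I}$ and the jump at $x=q$ between $U_{I\!I}$ and $U_{I\!I\!I}$. For each of these six shocks one constructs, on a small space interval at $t=0$, a monotone rarefaction-type arc along the corresponding wave fan curve ($R_1$, $D_2$ with $s>0$, or $R_3$) whose characteristic speeds are \emph{decreasing} in $x$ so that the characteristics converge and meet precisely at $t=1$ at the correct point. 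Between these six arcs one interpolates by constant states; by choosing $q=20$ large enough compared to the (uniformly bounded, see~\eqref{e:boundautovalori2}) characteristic speeds and by placing the compression arcs in disjoint small neighbourhoods of $x=-q$ and $x=q$ at $t=0$, one guarantees that the six families of characteristics do not interact before $t=1$, so the solution in $0\le t\le 1$ is explicitly given by the method of characteristics and is smooth away from the focusing points, coinciding with $W$ at $t=1$. The relevant statement about compression waves — that an initial Lipschitz profile along a genuinely nonlinear wave fan curve with decreasing speeds focuses into a single shock with no spurious waves — is exactly the notion referenced in \S~\ref{sss:cw}, and I would invoke it here.

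The main obstacle is part (b): one must be careful that the compression waves of the different characteristic families, and of the two jump points $x=\pm q$, do not interfere with one another before the focusing time $t=1$, and that no wave of a family other than the intended one is created along the way. Since along the $1$- and $3$-wave fan curves $v$ is constant, and the $1$- and $3$-curves commute (see \S~\ref{sss:13}), the $1$- and $3$-compression waves emanating from a single point can be superimposed harmlessly; the genuinely delicate point is the ordering of speeds — one needs $\lambda_1 < \lambda_2 < \lambda_3$ with a definite gap (which~\eqref{e:boundautovalori} provides, giving in particular $\lambda_1 < -5/2 < -2 < \lambda_2 < 2 < 3 < \lambda_3$) so that the $1$-, $2$-, $3$-fans from the \emph{same} point separate, and so that the fans from $x=-q$ and $x=q$ stay apart over the time interval $[0,1]$ provided $2q=40$ is large relative to the maximal speed $6$. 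Choosing the compression arcs supported in intervals of length $O(\omega)$ and the focusing time $1$, a short computation confirms that all six fans remain disjoint on $[0,1[$ and converge at $t=1$; since after $t=1$ the solution agrees with that of Lemma~\ref{l:infty}, it develops infinitely many shocks. I would present this via an explicit formula for the Lipschitz datum, deferring the elementary but slightly tedious speed-separation estimate to a remark, and conclude by invoking Lemma~\ref{l:infty} for the behaviour at times $t>1$.
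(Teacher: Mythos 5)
Your part~(a) and your overall strategy for part~(b) --- replacing each of the three shocks issuing from $(1,-q)$ and from $(1,q)$ by compression waves that focus at those points, and using $q=20$ together with the uniform speed bound~\eqref{e:boundautovalori2} to keep the six fans from interacting on $[0,1[$ --- are the same as in the paper. The genuine gap is hidden in the phrase ``between these six arcs one interpolates by constant states''. Since the $3$-characteristics are the fastest and the $1$-characteristics the slowest, at any time $t<1$ the three compression waves focusing at $(1,-q)$ must appear, from left to right, in the order $3$, $2$, $1$ --- the \emph{reverse} of the order in which the wave fan curves are composed in~\eqref{e:uduetre} --- and the $2$-arc must run along the integral curve $R_2$ (a compression wave is a continuous profile along $\vec r_2$), not along the Hugoniot branch $D_2[s,\cdot]$, $s<0$, used to define $U_{I\!I}$. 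For $\eta>0$ these two curves differ, and $R_2$ does not commute with the $1$- and $3$-curves (only those two commute, by~\eqref{e:wavecurves13},~\eqref{e:rare13}). Hence the intermediate constant states of your $t<1$ profile are \emph{not} the intermediate states of the forward Riemann fan, and it is not automatic that a $3$-arc, followed by a $2$-arc along $R_2$, followed by a $1$-arc, starting at $U_I$, ends exactly at $U_{I\!I}$. One must prove that the reordered decomposition $U_{I\!I}=D_1\big[\sigma,R_2\big[s,D_3[\tau,U_I]\big]\big]$ (and likewise for $U_{I\!I\!I}$) is solvable with $\tau>\omega/2$, $s<-\omega/2$, $\sigma<-\omega/2$, so that every arc is genuinely compressive; this is exactly the paper's Lemma~\ref{l:solvecw} (formulas~\eqref{e:uduesolve}--\eqref{e:utresolve}), proved by the Local Invertibility Theorem plus second-order expansions giving $|\tau-\omega|+|s+\omega|+|\sigma+\omega|\leq C\omega^2$. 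Without this step your datum would produce, at $t=1$, a state only \emph{close} to $W$ (with spurious small waves), whereas the lemma demands the exact equality $U(1,x)=W(x)$.

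Two smaller points: ``$D_2$ with $s>0$'' is not the right description of the $2$-compression arc --- it is the integral curve $R_2$ traversed so that $v$, hence $\lambda_2=2v$, decreases in $x$, which from the left endpoint corresponds to a negative parameter; and the remark that the $1$- and $3$-compression waves from a single point ``can be superimposed harmlessly'' is not what is needed, since in the correct juxtaposition (the explicit formula~\eqref{e:zeta} of the paper) the three arcs at each focusing point are spatially ordered $3,2,1$ and separated by the adjusted intermediate states $\underline U'$, $\underline U''$ (resp.\ $\underline U^\ast$, $\underline U^{\ast\ast}$). The verification of the hypotheses of Lemma~\ref{l:infty} and the speed-separation estimate are fine.
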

The fact that the states $U_I$, $U_{I\!I}$ and $U_{I\!I\!I}$ satisfy the hypotheses of Lemma~\ref{l:infty} follows from the remarks after formula~\eqref{e:wavecurves13}, so we are left to prove the second part of the lemma. The proof is organized as follows. Since we will use the notion of \emph{compression waves} in \S~\ref{sss:cw} we briefly go over this notion for the reader's convenience. In~\S~\ref{sss:technicalemma} we give a technical lemma. In~\S~\ref{sss:pfstartlip} we eventually complete 
the proof of Lemma~\ref{l:startlip}.

\subsubsection{Compression waves}
\label{sss:cw}
 Consider a general, strictly hyperbolic system of conservation laws~\eqref{e:cl}. We term $R_i[s, \bar U]$ the integral curve of $\vec r_i$ passing through $\bar U$, i.e.~the solution of the Cauchy problem~\eqref{e:intcur}. 
Assume that the $i$-th characteristic field is genuinely nonlinear, say $\nabla \lambda_i(U) \cdot \vec r_i (U) >0$ for every $U$. Let $\underline U: = R_i[\underline s, \bar U]$ for some \emph{negative} $\underline s<0$ and observe that the function
\begin{equation}
\label{e:compressionwave}
 U_{cw}(t, x) = 
 \left\{
 \begin{array}{ll}
 \bar U & x< \lambda_i (\bar U) t \\
 R_i[s, \bar U] & x = \lambda_i( R_i[s, \bar U] ) t, \; \underline s <s < 0 \\
 \underline U & x > \lambda_i (\underline U)t \\
 \end{array}
 \right.
\end{equation}
is a smooth solution of the conservation law on $]-\infty , 0[ \times \R$ and at $t=0$ it attains the values 
$$
 U(0, x) = 
 \left\{
 \begin{array}{ll}
 \bar U & x< 0 \\
 \underline U & x > 0 \\
 \end{array}
 \right.
$$
We term the function $U_{cw}$ defined as in~\eqref{e:compressionwave} a \emph{compression wave}. Loosely speaking, compression waves can be regarded as the backward in time analogous of rarefaction waves. 

\subsubsection{A technical lemma}
\label{sss:technicalemma}
First, we make a remark concerning the structure of the integral curves $R_1, \, R_2$ and $R_3$ of system~\eqref{e:pertSyst}. Owing to~\eqref{e:eigenvectors12}, we have the equalities 
\begin{equation}
\label{e:rare13}
 R_1 [\sigma, \bar U] = D_1 [\sigma, \bar U], \quad 
 R_3 [\tau, \bar U] = D_3 [\tau, \bar U]. 
\end{equation}
The proof of Lemma~\ref{l:startlip} is based on the following result.
\begin{lemma}
\label{l:solvecw}
Assume that the hypotheses of Lemma~\ref{l:startlip} are satisfied and that $U_{I\!I}$ and $U_{I\!I\!I}$ are defined by~\eqref{e:uduetre}. If the constant $\varepsilon$ in the statement of Lemma~\ref{l:startlip} is sufficiently small, then
\begin{subequations}
\begin{equation}
\label{e:uduesolve}
 U_{I\!I} : = D_1 \Big[ \sigma, R_2 \big[ s, D_3 [\tau, U_I]
 \big] \Big]
\end{equation}
for some $\tau>\displaystyle{\frac{1}{2} }\omega$, $s<-\displaystyle{\frac{1}{2} } \omega$ and $\sigma <-\displaystyle{\frac{1}{2} } \omega$. Also, 
\begin{equation}
\label{e:utresolve}
 U_{I\!I\!I} : = D_1 \Big[ \sigma^\ast, R_2 \big[ s^\ast, D_3 [
 \tau^\ast, U_{I\!I}] \big] \Big]
\end{equation}
\end{subequations}
for some $ \tau^\ast>\displaystyle{\frac{1}{2} } \omega$, $ s^\ast<-\displaystyle{\frac{1}{2} }\omega$ and $ \sigma^\ast <-\displaystyle{\frac{1}{2} }\omega$.
\end{lemma}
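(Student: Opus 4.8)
The plan is to read \eqref{e:uduesolve} as an implicit–function statement and then fix the signs of the three parameters by a first–order expansion in $\omega$. Set $\Phi(\sigma,s,\tau):=D_1\big[\sigma,\,R_2[s,\,D_3[\tau,U_I]]\big]$. By \eqref{e:eigenvectors12} and \eqref{e:boundautovalori} the vectors $\vec r_1(U_I),\vec r_2(U_I),\vec r_3(U_I)$ are linearly independent, and a direct computation gives $\partial_\sigma\Phi|_0=\vec r_1(U_I)$, $\partial_s\Phi|_0=\vec r_2(U_I)$, $\partial_\tau\Phi|_0=\vec r_3(U_I)$, so the differential of $\Phi$ at the origin is invertible. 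The Local Invertibility Theorem then provides, whenever $|U_{I\!I}-U_I|$ is small, a \emph{unique} small triple $(\sigma,s,\tau)$ with $\Phi(\sigma,s,\tau)=U_{I\!I}$, together with the bound $|\sigma|+|s|+|\tau|\le C|U_{I\!I}-U_I|$. Since \eqref{e:uduetre} and the smoothness of the wave curves give $|U_{I\!I}-U_I|\le C\omega$, the lemma reduces to checking $s<-\omega/2$, $\sigma<-\omega/2$ and $\tau>\omega/2$ once $\varepsilon$ is chosen small; the identity \eqref{e:utresolve} then follows by running the same argument with $U_{I\!I}$ in place of $U_I$, which is legitimate because $|U_{I\!I}|\le|U_I|+C\omega<1$ for $\varepsilon$ small, so the uniform estimates still apply.

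The parameter $s$ can in fact be computed \emph{exactly}, essentially for free, by looking at the second component. Since $\lambda_2(U)=2v$ we have $\nabla\lambda_2\equiv(0,2,0)$, so the normalization $\nabla\lambda_2\cdot\vec r_2=2$ from \eqref{e:gnl1} forces the second component of $\vec r_2$ to be identically $1$. Consequently the $v$–component is \emph{exactly} additive in the parameter along $R_2$, while $D_1$ and $D_3$ leave $v$ unchanged by \eqref{e:wavecurves13}; hence the second component of the right–hand side of \eqref{e:uduesolve} equals $v_I+s$. On the other hand \eqref{e:uduetre} — using \eqref{e:wavecurves13} for the first and third curves and the parametrization of $D_2$ — gives $v_{I\!I}=v_I-\omega$. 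Therefore $s=-\omega$, and likewise $s^\ast=-\omega$; in particular $s,s^\ast<-\omega/2$.

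For $\sigma$ and $\tau$ I would Taylor expand both sides of \eqref{e:uduesolve} at $U_I$. Using that $D_3[\tau,U_I]=U_I+\tau\vec r_3(v_I)$ is an exact straight line (\eqref{e:wavecurves13}, \eqref{e:rare13}), that $R_2$ has tangent $\vec r_2$ at its base point, and that $D_1[\sigma,\cdot]$ adds $\sigma$ times $\vec r_1$ of its (moving) base point, one gets $\Phi(\sigma,s,\tau)=U_I+\sigma\vec r_1(U_I)+s\vec r_2(U_I)+\tau\vec r_3(U_I)+\mathcal R$ with $|\mathcal R|\le C(|\sigma|+|s|+|\tau|)^2=\mathcal O(\omega^2)$; symmetrically, expanding \eqref{e:uduetre} term by term yields $U_{I\!I}=U_I-\omega\vec r_1(U_I)-\omega\vec r_2(U_I)+\omega\vec r_3(U_I)+\mathcal O(\omega^2)$. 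Equating the two and projecting onto the basis $\{\vec r_1(U_I),\vec r_2(U_I),\vec r_3(U_I)\}$ (the $\vec r_2$–component merely re-confirms $s=-\omega$) gives $\sigma=-\omega+\mathcal O(\omega^2)$ and $\tau=\omega+\mathcal O(\omega^2)$, hence $\sigma<-\omega/2$ and $\tau>\omega/2$ once $\varepsilon$ is small enough; the starred parameters $\sigma^\ast,\tau^\ast$ are obtained in the same way.

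The only genuinely delicate point is uniformity: the constant $C$ in the local–invertibility step, the size of the neighbourhood on which $\Phi$ is a diffeomorphism, and the Taylor remainders must all be chosen independently of the base point ($U_I$ ranging over $\{|U|\le 1/2\}$, and then $U_{I\!I}$ over $\{|U|\le 3/4\}$) and of $\eta\in[0,\varepsilon]$. This is precisely where one invokes the fact recorded in \S\ref{sss:baj1} that $F_\eta$ and all its derivatives are bounded on bounded sets uniformly in $\eta$, so that the curves $D_1,D_3,R_2$ and their derivatives — and therefore all the constants above — are controlled uniformly. With that in hand, the rest is a routine expansion.
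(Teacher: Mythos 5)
Your proposal is correct and follows essentially the same route as the paper's proof: the Local Invertibility Theorem gives the existence of the parameters with $|\sigma|+|s|+|\tau|\le C\omega$, and a first-order expansion of both \eqref{e:uduesolve} and \eqref{e:uduetre} around $U_I$ with $\mathcal O(\omega^2)$ remainders yields $|\sigma+\omega|+|s+\omega|+|\tau-\omega|\le C\omega^2$, whence the sign conditions for $\varepsilon$ small, with uniformity in $\eta$ and in the base point handled exactly as you indicate. The exact identity $s=-\omega$ extracted from the $v$-component is a small additional observation (the paper only proves $|s+\omega|\le C\omega^2$, which suffices), and it is consistent with the paper's argument.
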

\begin{proof}
We only give the proof of~\eqref{e:uduesolve}, since the proof of~\eqref{e:utresolve} is entirely analogous. 

We basically proceed as in the proof of Lemma~\ref{l:wp1}. First, we point out that~\eqref{e:uduetre} implies that $|U_I - U_{I\!I}| \leq C \omega
\leq C \varepsilon$. Here and in the rest of the proof $C$ denotes some universal constant. Its precise value can vary from line to line. 

By using the Local Invertibility Theorem, we infer that the values of $\tau$, $s$ and $\sigma$ are uniquely determined by imposing~\eqref{e:uduesolve}. Also, we have 
\begin{equation}
\label{e:estimateomega}
 |\tau| + |\sigma | + |s| \leq C \omega. 
\end{equation}
We are left to prove that $\tau>\displaystyle{\frac{1}{2} }\omega$, $s<-\displaystyle{\frac{1}{2} } \omega$ and $\sigma <-\displaystyle{\frac{1}{2} } \omega$. We introduce some notation: we define the states $U'$ and $\underline U'$
by setting 
$$
 U': = D_1 [\omega, U_I], \quad \underline U': = D_3 [\tau, U_I]. 
$$ 
By using~\eqref{e:uduetre} we infer 
\begin{equation}
\begin{split}
\label{e:relazione1}
U_{I\!I} & = D_2 [- \omega, U'] + \omega \vec r_3 (v_{I\!I}) 
\\ &
= U' - \omega \vec r_2 (U') + \Big\{ D_2 [- \omega, U'] -
U' + \omega \vec r_2 (U') \Big\} + \omega \vec r_3 (v_{I\!I}) \\ &
= U_I - \omega \vec r_1 (v_I) - \omega \vec r_2 (U') + \Big\{ D_2 [- \omega, U'] -
U' + \omega \vec r_2 (U') \Big\} + \omega \vec r_3 (v_{I\!I}) \\& 
= U_I - \omega \vec r_1 (v_I) - \omega \vec r_2 (U_I) + \omega \vec r_3 (v_{I}) \phantom{\Big[} \\
& \quad +
\omega \Big\{ \vec r_2 (U_I) - \vec r_2 (U') \Big\} + 
\Big\{ D_2 [- \omega, U'] -
U' + \omega \vec r_2 (U') \Big\} + \omega \Big\{ \vec r_3 (v_{I\!I}) 
- \vec r_3 (v_{I}) \Big\}.\\
\end{split}
\end{equation}
Note that 
\begin{equation}
\label{e:ordine2} 
\omega \Big| \vec r_2 (U_I) - \vec r_2 (U') \Big|+ 
\Big| D_2 [- \omega, U'] -
U' + \omega \vec r_2 (U') \Big| + \omega \Big| \vec r_3 (v_{I\!I}) 
- \vec r_3 (v_{I}) \Big| \leq C \omega^2. 
\end{equation}
By using~\eqref{e:uduesolve} and by arguing as before we obtain 
\begin{equation}
\begin{split}
\label{e:relazione2}
U_{I\!I} & = U_I + \tau \vec r_3 (v_I) + s \vec r_2 (U_I) + 
\sigma \vec r_1 (v_I) \\
& \quad + 
s \Big\{ \vec r_2 (\underline U') -\vec r_2 (U_I) \Big\} + 
\Big\{ R_2 [s, \underline U'] - \underline U'
- s \vec r_2 (\underline U') \Big\} +
\sigma \Big\{ \vec r_1 (v_{I\!I}) - \vec r_1 (v_I) \Big\}, 
\end{split}
\end{equation}
where, owing to~\eqref{e:estimateomega}, 
\begin{equation}
\label{e:ordine22}
\Big| s \Big\{ \vec r_2 (\underline U') -\vec r_2 (U_I) \Big\} \Big|+ 
\Big| R_2 [s, \underline U'] - \underline U'
- s \vec r_2 (\underline U') \Big|+
\Big| \sigma \Big\{ \vec r_1 (v_{I\!I}) - \vec r_1 (v_I) \Big\} \Big|
\leq C \omega^2. 
\end{equation}
By comparing~\eqref{e:relazione1} and~\eqref{e:relazione2} and recalling~\eqref{e:ordine2} and~\eqref{e:ordine22} we obtain that
$$
 |\tau - \omega| + |s + \omega| + |\sigma + \omega| \leq C \omega^2.
$$
Since $\omega>0$, this implies that $\tau>\displaystyle{\frac{1}{2} }\omega$, $s<-\displaystyle{\frac{1}{2} } \omega$ and $\sigma <-\displaystyle{\frac{1}{2} } \omega$ provided that $\varepsilon$ (and hence $\omega$) is sufficiently small. 
\end{proof}
\subsubsection{Proof of Lemma~\ref{l:startlip}}
\label{sss:pfstartlip}
We are now ready to complete the proof of Lemma~\ref{l:startlip}. 

We fix $\omega>0$ and $U_I \in \R^3$, $|U_I| \leq 1/2$. We term $U_{I\!I}$ and $U_{I\!I\!I}$ the states satisfying~\eqref{e:uduetre}. We determine the values $\sigma,\; s, \; \tau, \; \sigma^\ast, \; s^\ast, \; \tau^\ast$ by using~\eqref{e:uduesolve} and~\eqref{e:utresolve}, respectively. Owing to Lemma~\ref{l:solvecw}, we have that $\sigma<0$, $s <0$ and $\tau >0$ and hence 
we can define the function 
$U(t, x)$ by ``juxtaposing'' six compression waves like~\eqref{e:compressionwave}. More precisely, we introduce the following notation:
\begin{equation}
\label{e:statedef}
 \underline U' : = D_3 [\tau, U_I] , 
 \quad \underline U'' : = R_2 [s, \underline U'], 
 \quad \underline U^\ast : = D_3 [\tau^\ast, U_{I\!I}], 
 \quad \underline U^{\ast \ast} : = R_2 [s^\ast, \underline U^\ast] 
\end{equation}
For $t \in [0, 1)$ we define the function $U(t, x)$ by setting
\begin{equation}
\label{e:zeta}
 U(t, x) :=
 \left\{
 \begin{array}{lrclr}
 U_I & &x& <-q + \lambda_3 (U_I) \cdot (t-1) \qquad
 \\
 D_3 [\varsigma, U_I] &
 \text{if there is $0< \varsigma < \tau$:} \hfill
 &x & 
 =- q+ \lambda_3 (D_3 [\varsigma, U_I])\cdot (t-1) \\
 \underline U' & 
 -q+\lambda_3 (\underline U') \cdot (t-1) <
 & x& 
 < -q + \lambda_2 
 (\underline U')\cdot (t-1) 
 \\
 R_2 [\varsigma, \underline U'] &
 \text{if there is $s< \varsigma < 0$:} \hfill
 &x& 
 = -q +\lambda_2 (R_2 [\varsigma, \underline U']) \cdot (t-1) 
 \\
 \underline U'' & 
 -q+\lambda_2 (\underline U'') \cdot (t-1) < 
 &x& < 
 -q+\lambda_1 (\underline U'')\cdot (t-1) 
 \\
 D_1 [\varsigma, \underline U''] &
 \text{if there is $ \sigma< \varsigma < 0$:} \hfill
 &x &
 =-q+\lambda_1 (D_1 [\varsigma, \underline U'']) \cdot (t-1)
 \\
 U_{I\!I} & 
 -q+\lambda_1 (U_{I\!I})\cdot (t-1) < &x& < q
 +\lambda_3 (U_{I\!I}) (t-1) 
 \\
 D_3 [\varsigma, U_{I\!I}] 
 &\text{if there is $0< \varsigma < \tau^\ast $:} \hfill
 & x& 
 =q+\lambda_3 (D_3 [\varsigma, U_{I\!I}]) \cdot (t-1)
 \\
 \underline U^\ast & 
 q + \lambda_3 (\underline U^\ast) \cdot (t-1) < 
 &x& 
 < q+ \lambda_2 (\underline U\cdot )\cdot (t-1) 
 \\
 R_2 [\varsigma, \underline U^\ast] &
 \text{if there is $s^\ast< \varsigma < 0$:} \hfill
 &x& 
 = q+\lambda_2 (R_2 [\varsigma, \underline U^\ast]) \cdot (t-1) 
 \\
 \underline U^{\ast \ast} & 
 q+ \lambda_2 (\underline U^{\ast \ast}) \cdot (t-1) < 
 &x& 
 < q+\lambda_1 (\underline U^{\ast \ast})\cdot (t-1) 
 \\
 D_1 [\varsigma, \underline U^{\ast \ast}] &
 \text{if there is $\sigma^\ast< \varsigma < 0$:} \hfill
 & x& 
 =q+\lambda_1 (D_1 [\varsigma, \underline U^{\ast \ast}]) \cdot (t-1) 
 \\
 U_{I\!I\!I} && x & > q+\lambda_{1} (U_{I\!I\!I})) \cdot (t-1) 
 \\
 \end{array}
 \right.
\end{equation}
Note that the above function is well defined because 
$$
 \lambda_1 (U_{I\!I})\cdot 
 (t-1) - q < \lambda_3 (U_{I\!I}) (t-1) +q.
$$
Indeed, $q=20 >12$ by assumption and $|\lambda_1 (U_{I\!I})|, \,|\lambda_3(U_{I\!I}) | < 6$ owing to~\eqref{e:boundautovalori2}.

Note furthermore that $U(t, x)$ is a locally Lipschitz continuous function on $[0, 1 [ \times \R$ and that $U(1, x) = W(x)$, where $W$ is the same functions as in~\eqref{e:W}. This concludes the proof of the lemma. 
\subsection{A more robust initial datum}
\label{ss:tildeu}
We firstly introduce our analysis with some heuristics.
The analysis in the previous paragraph shows that if the initial datum is given by the same smooth function $U(0, \cdot)$ as in~\eqref{e:zeta}, then the solution of the Cauchy problem exhibits a wave pattern like the one in Figure~\ref{F:infty} and hence, in particular, develops infinitely many shocks. However, the above behavior is not robust with respect to perturbations of $U(0, \cdot)$. The main obstruction that might prevent the formation of infinitely many shocks is the following. We recall that the strength of the shocks generated at time $t=1$ at the points $x=q$ and $x=-q$ is \emph{small}, more precisely it is of the order $\omega <1$. 
By applying the second interaction estimate in~\eqref{e:noraref}, we conclude that the strength of the 1- and 3-shocks bounced back and forth between the two 2-shocks is weaker and weaker as one approaches the intersection point between the two 2-shocks, i.e.~the tip of the triangle in Figure~\ref{F:infty}. This means that, no matter how small a perturbation wave is, if it hits the triangle at a point sufficiently close to the tip it might happen that the perturbation is bigger than the shocks it meets. This might prevent the formation of infinitely many shocks because it might happen that the perturbation annihilates the shock it meets.

In order to make the initial datum more robust with respect to perturbations we add to $U(0, \cdot)$ the function $\Psi$ defined in~\S~\ref{sss:psi}, which is monotone in the direction of the eigenvectors. Very loosely speaking, the heuristic idea underpinning this construction is that in this way only shocks come into play, and no rarefactions. This is made rigorous in \S~\ref{s:pprop} by considering the wave-front tracking approximation of the solution: we prove that the presence of the function $\Psi$ implies that at $t=0$ the wave-front tracking approximation contains only shock waves. This will be the first step in the analysis that will allow us to conclude that the solution of the Cauchy problem develops infinitely many shocks, and that this behavior is robust with respect to perturbations. 

We are left to make one last remark: by looking at the explicit expression of $U$ we realize that there are three compression waves 
that interact at the point $(t, x)=(1, -q)$ and other three that interact at the point $(t, x)=(1, q)$. In \S~\ref{ss:sepcomp} we modify 
the datum $U(0, \cdot)$ by distancing the compression waves one from the other. Loosely speaking, this will 
imply that the corresponding shocks will form at time $t=1$ and \emph{then} they will interact at some later time. This will simplify the perturbation analysis because it will rule out the possibility that the compression waves interact with each other before the corresponding shocks have formed. 

This paragraph is organized as follows: 
\begin{itemize}
\item[\S~\ref{ss:sepcomp}:] we modify $U(0, \cdot)$ by distancing the compression waves one from the other.
\item[\S~\ref{sss:psi}:] we construct the function $\Psi$ ``monotone in the direction of the eigenvectors'' .
\item[\S~\ref{sss:tildeu}:] we eventually define the initial datum $\widetilde U$ in such a way that the solution of the Cauchy problem develops infinitely many shocks and that this behavior is robust with respect to perturbations. See Proposition~\ref{p:perturbation}.
\end{itemize}

\subsubsection{Compression waves separation: definition of $V$}
\label{ss:sepcomp}
We firstly introduce some notation. We fix a sufficiently large $\rho >0$ (its precise value will be discussed in the following, see~\eqref{e:c:parametrirho}), we recall that the parameter $q=20$ is the same as in the statement of Lemmas~\ref{l:infty}
and~\ref{l:startlip} and we set
\begin{subequations}
\label{e:intervalli}
\begin{equation}
\label{e:qs}
 \mathfrak q: = q +3, \qquad \mathfrak p: = q-3.
\end{equation}
We also introduce the following notation: 
\label{e:regions}
\begin{equation}
\begin{array}{lll}
\mathfrak R_\ell: = ]-\rho, -\mathfrak q - \lambda_3 (U_I)[, 
& \phantom{\displaystyle{\int}} &
\mathfrak R^3_\ell: = ]-\mathfrak q - \lambda_3 (U_I), -\mathfrak q - \lambda_3 (\underline U') [, \\ 
\mathfrak R'_\ell: = ] -\mathfrak q - \lambda_3 (\underline U'), 
 -q - \lambda_2 (\underline U') [ & 
 \phantom{\displaystyle{\int}} & 
 \mathfrak R^2_\ell: = ] -q - \lambda_2 (\underline U'),
 -q - \lambda_2 (\underline U'') [, \\ 
 \mathfrak R''_\ell: = ] -q - \lambda_2 (\underline U''), 
 -\mathfrak p - \lambda_1 (\underline U'') [, & \phantom{\displaystyle{\int}} &
 \mathfrak R^1_\ell: = ] 
 -\mathfrak p - \lambda_1 (\underline U''), -\mathfrak p - \lambda_1 (U_{I\!I}) [, \\
 \mathfrak R_m: = ] 
 -\mathfrak p - \lambda_1 (U_{I\!I}) , \mathfrak p - \lambda_3 (U_{I\!I})[ ,
 & \phantom{\displaystyle{\int}} &
 \mathfrak R^3_r: = ]\mathfrak p - \lambda_3 (U_{I\!I}), \mathfrak p - \lambda_3 (\underline U^\ast) [, \\
 \mathfrak R'_r: = ] \mathfrak p - \lambda_3 (\underline U^\ast), 
 q - \lambda_2 (\underline U^\ast) [ & 
 \phantom{\displaystyle{\int}} & 
 \mathfrak R^2_r: = ] q - \lambda_2 (\underline U^\ast),
 q - \lambda_2 (\underline U^{\ast \ast}) [, \\ 
 \mathfrak R''_r: = ] q- \lambda_2 (\underline U^{\ast \ast}) , 
 \mathfrak q - \lambda_1 (\underline U^{\ast \ast}) [, & \phantom{\displaystyle{\int}} &
 \mathfrak R^1_r: = ] 
 \mathfrak q - \lambda_1 (\underline U^{\ast \ast}) , 
 \mathfrak q - \lambda_1 (U_{I\!I\!I}) [, \\
 \mathfrak{R}_r: = ] \mathfrak q - \lambda_1 (U_{I\!I\!I}) , \rho [ & \phantom{\displaystyle{\int}} & \\
 \end{array}
\end{equation}
\setlength{\unitlength}{3947sp}%

We also define the open sets $\mathfrak R_c$ and $\mathfrak R_w$ by setting 
\begin{equation}
\label{e:errec}
\mathfrak R_{c}=\mathfrak R_\ell \cup\mathfrak R'_\ell\cup\mathfrak R''_\ell\cup\mathfrak R_m\cup\mathfrak R_r\cup\mathfrak R'_r\cup\mathfrak R''_r
\end{equation}
and 
\begin{equation}
\label{e:errrew}
\mathfrak R_{w}= \mathfrak R^3_\ell\cup \mathfrak R^2_\ell\cup \mathfrak R^1_\ell\cup \mathfrak R^3_r\cup \mathfrak R^2_r\cup \mathfrak R^1_r,
 \end{equation}
\end{subequations}
respectively. To give an heuristic interpretation of the above notation we point out that, if 
we had $\mathfrak q = \mathfrak p=q$, then the intervals in~\eqref{e:regions} would be the same as in the right hand side of~\eqref{e:zeta}. In particular, we would have that the function $U(0, \cdot)$
is constant on $\mathfrak R_c$ and has a nonzero derivative on $\mathfrak R_w$.

\begin{figure}[ht]
\centering
\begin{picture}(0,0)%
\includegraphics{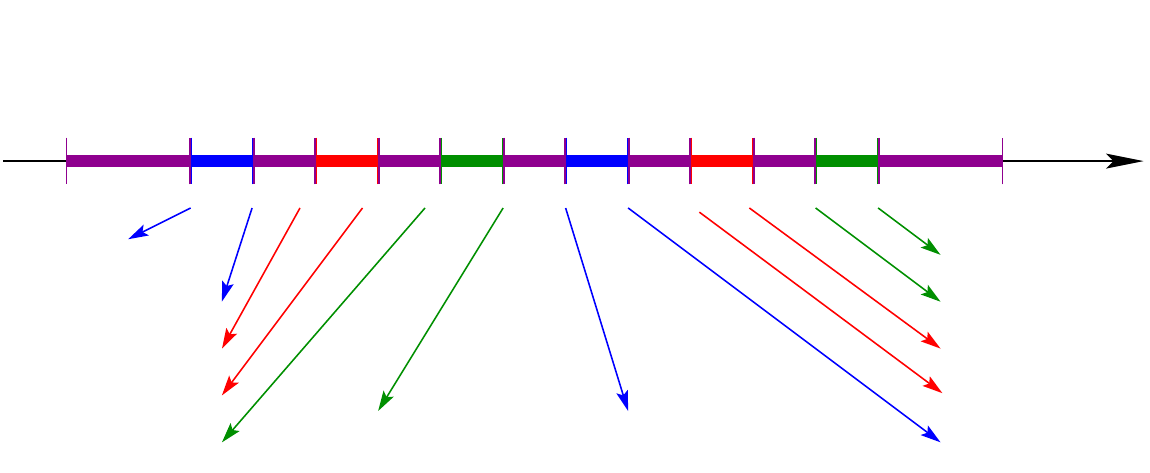}%
\end{picture}%
\setlength{\unitlength}{3947sp}%
\begingroup\makeatletter\ifx\SetFigFont\undefined%
\gdef\SetFigFont#1#2#3#4#5{%
  \reset@font\fontsize{#1}{#2pt}%
  \fontfamily{#3}\fontseries{#4}\fontshape{#5}%
  \selectfont}%
\fi\endgroup%
\begin{picture}(5502,2253)(4786,-2155)
\put(5701,-61){\makebox(0,0)[lb]{\smash{{{\color[rgb]{.259,0,1}$\mathfrak R^3_\ell$}%
}}}}
\put(8701,-61){\makebox(0,0)[lb]{\smash{{{\color[rgb]{0,.56,0}$\mathfrak R^1_r$}%
}}}}
\put(7501,-61){\makebox(0,0)[lb]{\smash{{{\color[rgb]{.259,0,1}$\mathfrak R^3_r$}%
}}}}
\put(9901,-361){\makebox(0,0)[lb]{\smash{{{\color[rgb]{.56,0,.56}$\mathfrak R_c$}%
}}}}
\put(9901,-61){\makebox(0,0)[lb]{\smash{{{\color[rgb]{0,0,0}$\mathfrak R_w$}%
}}}}
\put(5101,-361){\makebox(0,0)[lb]{\smash{{{\color[rgb]{.56,0,.56}$\mathfrak R_\ell$}%
}}}}
\put(9376,-961){\makebox(0,0)[lb]{\smash{{{\color[rgb]{.56,0,.56}$\rho$}%
}}}}
\put(4951,-961){\makebox(0,0)[lb]{\smash{{{\color[rgb]{.56,0,.56}$-\rho$}%
}}}}
\put(6301,-61){\makebox(0,0)[lb]{\smash{{{\color[rgb]{1,0,0}$\mathfrak R^2_\ell$}%
}}}}
\put(8101,-61){\makebox(0,0)[lb]{\smash{{{\color[rgb]{1,0,0}$\mathfrak R^2_r$}%
}}}}
\put(9376,-1636){\makebox(0,0)[lb]{\smash{{{\color[rgb]{1,0,0}$ q - \lambda_2 (\underline U^{**})$}%
}}}}
\put(9376,-1861){\makebox(0,0)[lb]{\smash{{{\color[rgb]{1,0,0}$ q - \lambda_2 (\underline U^{*})$}%
}}}}
\put(6001,-361){\makebox(0,0)[lb]{\smash{{{\color[rgb]{.56,0,.56}$\mathfrak R_\ell'$}%
}}}}
\put(6601,-361){\makebox(0,0)[lb]{\smash{{{\color[rgb]{.56,0,.56}$\mathfrak R_\ell''$}%
}}}}
\put(7201,-361){\makebox(0,0)[lb]{\smash{{{\color[rgb]{.56,0,.56}$\mathfrak R_m$}%
}}}}
\put(4801,-1636){\makebox(0,0)[lb]{\smash{{{\color[rgb]{1,0,0}$ - q - \lambda_2 (\underline U')$}%
}}}}
\put(4801,-1861){\makebox(0,0)[lb]{\smash{{{\color[rgb]{1,0,0}$ - q - \lambda_2 (\underline U'')$}%
}}}}
\put(7801,-361){\makebox(0,0)[lb]{\smash{{{\color[rgb]{.56,0,.56}$\mathfrak R_r'$}%
}}}}
\put(8401,-361){\makebox(0,0)[lb]{\smash{{{\color[rgb]{.56,0,.56}$\mathfrak R_r''$}%
}}}}
\put(9226,-361){\makebox(0,0)[lb]{\smash{{{\color[rgb]{.56,0,.56}$\mathfrak R_r$}%
}}}}
\put(6901,-61){\makebox(0,0)[lb]{\smash{{{\color[rgb]{0,.56,0}$\mathfrak R^1_\ell$}%
}}}}
\put(9376,-1186){\makebox(0,0)[lb]{\smash{{{\color[rgb]{0,.56,0}$\mathfrak q - \lambda_1 (U_{I\!I\!I})$}%
}}}}
\put(9376,-1411){\makebox(0,0)[lb]{\smash{{{\color[rgb]{0,.56,0}$ \mathfrak q - \lambda_1 (\underline U^{**})$}%
}}}}
\put(4801,-1186){\makebox(0,0)[lb]{\smash{{{\color[rgb]{.259,0,1}$ -\mathfrak q - \lambda_3 (U_I)$}%
}}}}
\put(4801,-1411){\makebox(0,0)[lb]{\smash{{{\color[rgb]{.259,0,1}$ -\mathfrak q - \lambda_3 (\underline U')$}%
}}}}
\put(4801,-2086){\makebox(0,0)[lb]{\smash{{{\color[rgb]{0,.56,0}$ -\mathfrak p - \lambda_1 (\underline U'')$}%
}}}}
\put(9376,-2086){\makebox(0,0)[lb]{\smash{{{\color[rgb]{.259,0,1}$ \mathfrak p - \lambda_3 (\underline U^{*})$}%
}}}}
\put(6226,-2086){\makebox(0,0)[lb]{\smash{{{\color[rgb]{0,.56,0}$ -\mathfrak p - \lambda_1 (U_{I\!I})$}%
}}}}
\put(7726,-2086){\makebox(0,0)[lb]{\smash{{{\color[rgb]{.259,0,1}$ \mathfrak p - \lambda_3 (U_{I\!I})$}%
}}}}
\end{picture}%
\caption{Intervals defined in Equations~\eqref{e:intervalli}}
\end{figure}

To construct the function $V$, we fix the parameters $\delta$ and $\omega$ and we set 
\begin{equation}
\label{e:cosaedelta}
U_I: = (\delta, 0, -\delta).
\end{equation}
 We determine the values $\sigma,\; s, \; \tau, \; \sigma^\ast, \; s^\ast, \; \tau^\ast$ by using~\eqref{e:uduesolve} and~\eqref{e:utresolve}, respectively. Finally, we determine $\underline U'$, $\underline U''$, $\underline U^\ast$ and $\underline U^{\ast \ast}$ by using~\eqref{e:statedef}.
We now define the function $V: ]-\rho, \rho[\to \R^3$ in such a way that $V$ is a 3-compression wave on $\mathfrak R^3_\ell\cup \mathfrak R^3_r$, a 2-compression wave on $\mathfrak R^2_\ell\cup \mathfrak R^2_r$ and a 
1-compression wave on $\mathfrak R^1_\ell\cup \mathfrak R^1_r$. 
More precisely, we set 
\begin{equation}
\label{e:VI}
 V( x) :=
 \left\{
 \begin{array}{lrclr}
 U_I & x \in \mathfrak R_\ell \\
 D_3 [\varsigma, U_I] &
 \text{if there is $0< \varsigma < \tau$:} \hfill
 &x & 
 =- \mathfrak q- \lambda_3 (D_3 [\varsigma, U_I]) \\
 \underline U' & x \in \mathfrak R'_\ell \\
 R_2 [\varsigma, \underline U'] &
 \text{if there is $s< \varsigma < 0$:} \hfill
 &x& 
 = -q - \lambda_2 (R_2 [\varsigma, \underline U']) \\
 \underline U'' & x \in \mathfrak R''_\ell \\
 D_1 [\varsigma, \underline U''] &
 \text{if there is $ \sigma< \varsigma < 0$:} \hfill
 &x &
 =-\mathfrak p- \lambda_1 (D_1 [\varsigma, \underline U'']) \\
 U_{I\!I} & x \in \mathfrak R_m \\
 D_3 [\varsigma, U_{I\!I}] 
 &\text{if there is $0< \varsigma < \tau^\ast $:} \hfill
 & x& 
 =\mathfrak p - \lambda_3 (D_3 [\varsigma, U_{I\!I}]) 
 \\ 
 \underline U^\ast & 
 x \in \mathfrak R'_r \\
 R_2 [\varsigma, \underline U^\ast] &
 \text{if there is $s^\ast< \varsigma < 0$:} \hfill
 &x& 
 = q- \lambda_2 (R_2 [\varsigma, \underline U^\ast]) 
 \\
 \underline U^{\ast \ast} & x \in \mathfrak R''_r 
 \\
 D_1 [\varsigma, \underline U^{\ast \ast}] &
 \text{if there is $\sigma^\ast< \varsigma < 0$:} \hfill
 & x& 
 =\mathfrak q- \lambda_1 (D_1 [\varsigma, \underline U^{\ast \ast}]) 
 \\
 U_{I\!I\!I} & x \in \mathfrak R_r \\
 \end{array}
 \right.
\end{equation}
Note that if we had $\mathfrak q=\mathfrak p=q$, then $V$ would coincide with the function $U(0, \cdot)$ defined as in~\eqref{e:zeta}. 
\begin{figure}
\begin{center}
\begin{picture}(0,0)%
\includegraphics{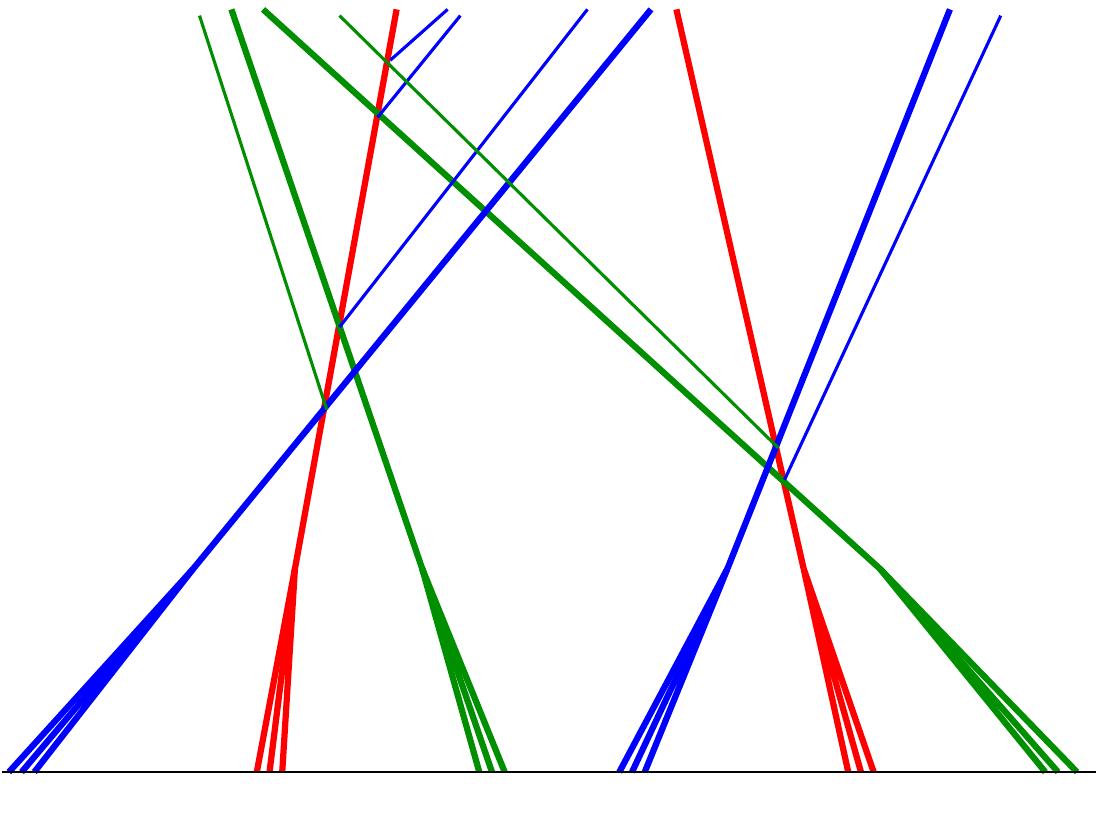}%
\end{picture}%
\setlength{\unitlength}{3947sp}%
\begingroup\makeatletter\ifx\SetFigFont\undefined%
\gdef\SetFigFont#1#2#3#4#5{%
  \reset@font\fontsize{#1}{#2pt}%
  \fontfamily{#3}\fontseries{#4}\fontshape{#5}%
  \selectfont}%
\fi\endgroup%
\begin{picture}(5274,3905)(139,-9283)
\put(2678,-8981){\makebox(0,0)[lb]{\smash{{{}{\color[rgb]{.56,0,.56}$\mathfrak R_m$}%
}}}}
\put(188,-9268){\makebox(0,0)[lb]{\smash{{{}{\color[rgb]{.259,0,1}$\mathfrak R^3_\ell$}%
}}}}
\put(792,-9268){\makebox(0,0)[lb]{\smash{{{}{\color[rgb]{.56,0,.56}$\mathfrak R'_\ell$}%
}}}}
\put(1402,-9268){\makebox(0,0)[lb]{\smash{{{}{\color[rgb]{1,0,0}$\mathfrak R^2_\ell$}%
}}}}
\put(1891,-9268){\makebox(0,0)[lb]{\smash{{{}{\color[rgb]{.56,0,.560}$\mathfrak R''_\ell$}%
}}}}
\put(2385,-9268){\makebox(0,0)[lb]{\smash{{{}{\color[rgb]{0,.56,0}$\mathfrak R^1_\ell$}%
}}}}
\put(3142,-9268){\makebox(0,0)[lb]{\smash{{{}{\color[rgb]{.259,0,1}$\mathfrak R^3_r$}%
}}}}
\put(3631,-9268){\makebox(0,0)[lb]{\smash{{{}{\color[rgb]{.56,0,.56}$\mathfrak R'_r$}%
}}}}
\put(4211,-9268){\makebox(0,0)[lb]{\smash{{{}{\color[rgb]{1,0,0}$\mathfrak R^2_r$}%
}}}}
\put(4699,-9268){\makebox(0,0)[lb]{\smash{{{}{\color[rgb]{.56,0,.56}$\mathfrak R''_r$}%
}}}}
\put(5218,-9268){\makebox(0,0)[lb]{\smash{{{}{\color[rgb]{0,.56,0}$\mathfrak R^1_r$}%
}}}}
\end{picture}%
\caption{The solution of the Cauchy problem with initial datum the function $V$ defined as in~\eqref{e:VI}}
\label{F:V}
\end{center}
\end{figure} 

\subsubsection{Monotonicity in the direction of the eigenvalues: definition of $\Psi$}
\label{sss:psi}
We fix the parameters  $\zeta_c>0$ and $\zeta_w>0$ and we define the function $\Psi: ]-\rho, \rho[ \to \R^3$ by requiring that $\Psi(0)= \vec 0$ and that 
\begin{gather}
\label{e:psi} 
 \Psi '(x) : = 
 \begin{cases}- \zeta_{c} \vec r_{1I}
 -\zeta_{c} \vec r_{2I} +\zeta_{c} \vec r_{3I} &\text{if $x\in \mathfrak R_{c}$} \\ - \zeta_{w} \vec r_{1I}
 -\zeta_{w} \vec r_{2I} +\zeta_{w} \vec r_{3I}
 &\text{if $x\in \mathfrak R_{w}$}\\ 
 \end{cases} 
\end{gather}
In the previous expression, we used the notation $\vec r_{1I}= \vec r_1 (U_I)$, $\vec r_{2I}= \vec r_2 (U_I)$ and {$\vec r_{3I}~=~\vec r_3 (U_I)$}. 
\subsubsection{Definition of the initial datum $\widetilde U$}
\label{sss:tildeu}
We now define the Lipschitz continuous function
$\widetilde U:~\R \to~\R^3$ by setting 
\begin{equation}
\label{e:tildeu}
 \widetilde U(x) : =
 \left\{
 \begin{array}{ll}
 \Phi^-(x) & x<- \rho \\
 V(x) + \Psi (x) & - \rho < x < \rho\\ 
 \Phi^+ (x) & x >\rho. \\
 \end{array}
 \right.
\end{equation} 
In the above expression, the function $V$ is as in~\eqref{e:VI}, the function $\Psi$ is defined in \S~\ref{sss:psi} and the functions $\Phi^-, \Phi^+: \R \to \R^3$ are Lipschitz continuous and defined in such a way that the 
function $\widetilde U$ is continuous and compactly supported. 
We also require that each component of $\Phi^-(x)$ and $\Phi^+(x)$ is monotone. 

We can now state the main result of the present section. Proposition~\ref{p:perturbation} below states that\begin{enumerate}
\item the solution of the Cauchy problem obtained by coupling~\eqref{e:cl2} with the initial datum $U(0, x) = \widetilde U$ has infinitely many shocks;
\item this behavior is robust with respect to sufficiently small perturbations of the initial datum. 
\end{enumerate}
\begin{proposition}
\label{p:perturbation} Fix $q=20$. Let $0< \varepsilon <1$ and fix the parameters
\begin{subequations}
\label{e:allparameters}
\begin{align}
 &\delta: = \varepsilon, \quad \zeta_w := \varepsilon /2,\label{e:c:parametribis} \\
& \eta : = \varepsilon^2, \quad  \omega := \varepsilon^3, \label{e:c:parametri2} \\
& \zeta_c := \varepsilon^9, \quad r := \varepsilon^{10}/2, \label{e:c:parametri3bis} 
\end{align}
Note that by combining the above choices with~\eqref{e:T} and~\eqref{e:uduetre} we get
\begin{equation}
         \widetilde T=\frac{20}{\varepsilon^{3}} \label{e:c:parametriT}
\end{equation}
We also require
\begin{equation}
  \rho := 12 \widetilde T + 40 = 40\left(\frac{6}{\varepsilon^{3}}+1\right)\label{e:c:parametrirho}.
\end{equation}
\end{subequations}
Consider the same function $\widetilde U$ as in~\eqref{e:tildeu}. If the constant $\varepsilon$ is sufficiently small, then, for every initial datum $U_0$ such that 
\begin{equation}
\label{e:palla}
\| U_0 - \widetilde U \|_{W^{1 \infty}} < r,
\end{equation}
the admissible solution of the Cauchy problem obtained by coupling system~\eqref{e:cl2} with the initial datum $U(0, \cdot) = U_0$ has infinitely many shocks 
in the bounded set $]0, 2 \widetilde T[ \times ]- 2q, 2q[$.
\end{proposition}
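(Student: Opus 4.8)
The plan is to work throughout with the wave front-tracking approximation $U^\nu$ of the admissible solution $U$ of the Cauchy problem with datum $U_0$ (see \S\ref{ss:wft}), where $\|U_0-\widetilde U\|_{W^{1 \infty}}<r$; to show that $U^\nu$ develops the wave pattern of Figure~\ref{F:infty}, with bouncing shocks whose strengths are bounded below uniformly in $\nu$; and then to pass to the limit $\nu\to 0^+$ using the fine structure results of Bressan and LeFloch~\cite{BressanLeFloch}. The argument, carried out in \S\ref{s:pprop}, upgrades the non-robust picture of Lemmas~\ref{l:infty} and~\ref{l:startlip} to one that survives $W^{1 \infty}$-perturbations, and it splits into three stages.

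\textbf{Stage 1: at $t=0^+$ the front-tracking approximation contains only shocks.} On $]-\rho,\rho[$ one has $U_0=V+\Psi+(U_0-\widetilde U)$, where $V$ is a juxtaposition of $1$-, $2$- and $3$-compression waves, $\Psi'$ is the fixed vector $-\zeta\vec r_{1I}-\zeta\vec r_{2I}+\zeta\vec r_{3I}$ with $\zeta\in\{\zeta_c,\zeta_w\}$ by~\eqref{e:psi}, and $\|(U_0-\widetilde U)'\|_{\distr}<r\ll\zeta_c$. Hence on every mesh interval the increment of $U_0$ is, up to a higher order error, a negative multiple of $\vec r_{1I}$ plus a negative multiple of $\vec r_{2I}$ plus a positive multiple of $\vec r_{3I}$, possibly corrected by the contribution of one of the compression waves of $V$ in the regions $\mathfrak R_w$; this is exactly the type of hypothesis covered, respectively, by Lemmas~\ref{l:wp1} and~\ref{l:wp1bis}, and — in $\mathfrak R_w$, where the increment must be averaged against the sampling measure — by Lemma~\ref{l:wp1tris}, with $U_I=(\delta,0,-\delta)$. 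It follows that every outgoing wave of $U^\nu(0^+,\cdot)$ is a $1$-, $2$- or $3$-shock, and that the definite-sign structure of $\Psi'$ is reflected, near every point of $]-\rho,\rho[$, by a definite amount of $1$-, $2$- and $3$-shock; outside $]-\rho,\rho[$ one reaches the same conclusion from the monotonicity of $\Phi^\pm$. In particular $U^\nu$ contains no rarefaction at time $0^+$.

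\textbf{Stage 2: the wave pattern forms and is robust.} By Lemma~\ref{L:v} the second component of $U^\nu$ approximates the entropy solution of $\partial_t v+\partial_x[v^2]=0$; since $v$ of $\widetilde U$ is non-increasing and, by the construction of $V$ together with $r\ll\omega$, drops by an amount of order $\omega$ across a thin layer near $x\approx -q$ and by a further amount of order $\omega$ across a thin layer near $x\approx q$, the solution develops two macroscopic $2$-shocks, of speeds $\approx -\omega$ and $\approx -3\omega$ (the sum of the one-sided traces, cf.~\eqref{e:2speed}), which collide at the tip at time $\approx\widetilde T$, cf.~\eqref{e:T},~\eqref{e:c:parametriT}; a perturbation of size $r\ll\omega$ cannot erase these two layers, so the two macroscopic $2$-shocks are robust. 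A $1$- or $3$-shock released at one of them crosses the gap between them — which has length $\approx 40-2\omega(t-1)$ and shrinks to $0$ as $t\uparrow 1+\widetilde T$ — at speed of order $4$, reaches the other $2$-shock, and by Lemma~\ref{L:ie} produces three outgoing shocks, among them a new bouncing shock whose strength does not vanish thanks to the lower bound $\tfrac1{100}s\sigma<\tau$ in~\eqref{e:noraref}. Since the round-trip time is comparable to this gap, it tends to $0$ while the time left before the tip does so at the same rate, so the bouncing occurs infinitely many times, producing a sequence of interaction points accumulating at the tip, each carrying a genuine shock of $U^\nu$. Every other interaction is of a type already analysed — $1$-$1$, $1$-$3$, $3$-$3$ (merging or crossing, \S\ref{sss:13}, \S\ref{sss:1133}), $1$-$2$, $2$-$3$ (Lemma~\ref{L:ie}) or $2$-$2$ (Lemma~\ref{C:22shocks}) — and all of them produce only shocks; thus no rarefaction is ever created, and, since same-family shocks merge while different-family shocks refract into three shocks, no shock is ever annihilated. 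The parameter hierarchy $r\ll\zeta_c\ll\omega\ll\eta\ll\zeta_w\sim\delta\sim\varepsilon$ keeps all strengths below the thresholds of Lemmas~\ref{L:ie} and~\ref{C:22shocks} throughout $]0,2\widetilde T[\times]-2q,2q[$, and it guarantees that the waves generated by the perturbation — which by Stage~1 share the definite sign structure imposed by $\Psi'$ — are either absorbed into the principal shocks or harmlessly refracted, so that the bouncing count is unaffected.

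\textbf{Stage 3: passage to the limit, and the main difficulty.} For fixed $\nu$ the threshold $\mu_\nu$ of \S\ref{ss:wft} forces the simplified Riemann solver after finitely many bounces, so only finitely many bounces of $U^\nu$ are resolved accurately; but $\mu_\nu\to 0$ as $\nu\to 0^+$, so this number tends to $+\infty$, and by the lower bounds of Stage~2 the accurately resolved bouncing shocks carry strengths bounded away from $0$ uniformly in $\nu$. The fine convergence properties of the front-tracking scheme~\cite{BressanLeFloch} then yield, in the limit, a Lipschitz shock curve of $U$ through each of the infinitely many accumulation-point interactions; as these all lie in $]0,2\widetilde T[\times]-2q,2q[$, the proposition follows. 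The technical core — and the reason the construction of \S\ref{ss:tildeu}, namely the function $\Psi$ and the separation of the compression waves, is needed — is Stage~2: one must propagate the interaction estimates of \S\ref{SS:bjin} along the whole, infinitely refined bouncing pattern while simultaneously tracking the perturbation and the non-physical waves, with bounds uniform in $\nu$, and in particular show that the $W^{1 \infty}$-perturbation, although eventually larger than the geometrically decaying bouncing shocks it meets, can never annihilate them.
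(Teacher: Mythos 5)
Your outline follows the same overall strategy as the paper (front tracking, ``well-prepared'' Riemann problems at $t=0$ so that only shocks appear, two robust macroscopic $2$-shocks with $1$- and $3$-shocks bouncing between them via Lemma~\ref{L:ie}, limit via the fine structure results of~\cite{BressanLeFloch}), but it contains a genuine gap, which you yourself flag at the end of Stage~3: the entire quantitative mechanism that makes Stage~2 true is missing. Asserting that the perturbation waves ``are either absorbed into the principal shocks or harmlessly refracted, so that the bouncing count is unaffected'' is precisely the statement that has to be proved, uniformly in $\nu$ and along an unboundedly refined interaction pattern in which the bouncing shocks have geometrically decaying strength while the perturbation does not. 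The paper's proof supplies a concrete mechanism: every front is assigned to a group A (the waves that will build the six principal shocks), B (perturbation shocks created at $t=0$), or C$_m$ ($m$-th generation shocks created at interactions), and a bootstrap argument (Lemma~\ref{l:quantitative}) gives the uniform bounds $\sum_{\mathrm A}\V\leq K\omega$, $\sum_{\mathrm B}\V\leq K\omega\varepsilon$, $\sum_{\mathrm C_m}\V\leq(2K\omega)^{m+1}$, resting on the interaction estimate~\eqref{e:formulabressan} and on the bookkeeping rules~\eqref{e:interae3}. These bounds are then used to show that the shocks emitted in $\mathfrak R^3_\ell,\dots,\mathfrak R^1_r$ actually coalesce by $t=6/5$ into six big shocks with explicit lower bounds on their strengths ($\omega\sqrt\varepsilon/2$ for the $1$- and $3$-shocks, $\mathcal O(1)\omega$ for the $2$-shocks, Lemmas~\ref{l:collapse} and~\ref{l:collapse2}), that the two $2$-shocks merge by $2\widetilde T$, and finally (Lemma~\ref{l:almeno}) that for every threshold $\theta>\mu_\nu/\omega^2$ there are at least $n_\theta=[\log_{\omega/2}(\mathcal O(1)\theta/\sqrt\varepsilon)]$ fronts of strength $\geq\theta$, including the verification that the accurate (not the simplified) solver is used at these bounces. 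Your proposal also skips the verification, needed at every $2$-$2$ interaction, that the hypotheses of Lemma~\ref{C:22shocks} hold along the evolution (the paper gets $|s_1|,|s_2|\leq\varepsilon$ from Lemma~\ref{L:v} and the monotone decay of the total variation for the scalar law), and the check that non-physical waves do not disturb the $v$-component and hence the $2$-shock geometry.

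The limit argument in Stage~3 is also incomplete as stated: knowing that the accurately resolved bouncing shocks have strengths bounded below uniformly in $\nu$ does not by itself yield infinitely many \emph{distinct} shock curves of the limit $U$, since for a fixed threshold all such fronts could a priori converge to the same limit discontinuity. The paper handles this by an inductive choice of thresholds: given $\theta_1$ with $n_{\theta_1}\geq1$, one picks $\theta_2$ with $n_{\theta_2}-n_{\theta_1/2}>1$, so that $U^\nu$ contains a maximal $\theta$-shock front whose strength lies in $[\theta_2,\theta_1/2)$; uniform convergence of these fronts (up to subsequences) then produces limit shocks that are provably distinct, and iterating with $n_\theta\to\infty$ as $\theta\to0^+$ gives infinitely many shocks in the compact set. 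Finally, a small inaccuracy: your claim that outside $]-\rho,\rho[$ the monotonicity of $\Phi^\pm$ gives ``only shocks'' is neither justified nor needed; the paper instead disposes of the region $|x|\geq\rho$ by finite propagation speed and the choice~\eqref{e:c:parametrirho} of $\rho$ (Remark~\ref{rem:qrho}).
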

The proof of Proposition~\ref{p:perturbation} is the most technical part of the paper and it is given in~\S~\ref{s:pprop}. The main result of the present paper, namely Theorem~\ref{T:main}, follows as a corollary from Proposition \ref{p:perturbation}, see \S~\ref{ss:proofteo}. 
\section{Proof of the main results}
\label{s:pprop}
In this section we establish the proof of Theorem~\ref{T:main} and Proposition~\ref{p:perturbation}. More precisely, in \S~\ref{ss:proofteo} we show that Theorem~\ref{T:main} follows as a corollary of Proposition~\ref{p:perturbation}. The rest of the present section is devoted to the proof of Proposition~\ref{p:perturbation}. Since the proof is fairly technical and articulated, we provide a roadmap in~\S~\ref{ss:roadmap}. The proof is established in the remaining paragraphs.

\subsection{Proof roadmap}
\label{ss:roadmap}
In this paragraph we provide the proof outline and we discuss the basic ideas underpinning the analysis in the following paragraphs. 

We start with some heuristic considerations. We recall that the function $V: \mathbb R \to \mathbb R$ is defined as in~\eqref{e:VI}. The qualitative structure of solution of the Cauchy problem with initial datum $V$ is illustrated in Figure~\ref{F:V}: by the time $t=1$, six shocks have formed. More precisely, moving from the left to the right there are a 3-shock, a 2-shock, a 1-shock, a large interval where the solution is constant and then again a 3-shock, a 2-shock and a 1-shock. These shocks interact at some later time and produce a wave pattern with infinitely many shocks. The initial datum $U_0$ is obtained from $V$ by adding the function $\Psi$ and the perturbation $U_0 - \widetilde U$, which is $W^{1 \, \infty}$ small, see~\eqref{e:tildeu},~\eqref{e:palla}. Loosely speaking, the goal of the following paragraphs is to show that adding $\Psi$ and $U_0 - \widetilde U$ to the initial datum does not affect too much the qualitative structure of the solution of the Cauchy problem and, in particular, does not 
jeopardize the formation of infinitely many shocks. Since computing explicit solutions is prohibitive, we rely on the wave-front tracking approximation. 

The proof of Proposition~\ref{p:perturbation} is organized as follows:
\begin{itemize}
\item[\S~\ref{ss:preli}:] We make some preliminary remarks that will be used in the following paragraphs. 
\item[\S~\ref{ss:wft:id}:] We introduce the wave front tracking approximation $U^\nu$
($\nu$ is the approximation parameter)
 of the solution of the Cauchy problem obtained by coupling the Baiti-Jenssen system~\eqref{e:cl2} with the initial datum $U(0, \cdot) = U_0$ satisfying~\eqref{e:palla}. In particular, we construct a piecewise constant approximation of the initial datum and we discuss the waves that are generated at $t=0$. A feature that will be very useful in the analysis at the following paragraphs is that at $t=0$ only shock waves are generated. This is the reason why we introduced the function $\Psi$, monotone in the direction of the eigenvectors, see~\eqref{e:psi},~\eqref{e:tildeu} and the analysis in~\S~\ref{sss:erreelle} and~\S~\ref{sss:id:cw}.
\item[\S~\ref{Ss:qualitativeanalysis}:] We carry on a qualitative analysis of the waves of the wave front-tracking approximation $U^\nu$. In particular, we split the wave generated at $t=0$ in two groups: group A comprises the waves that will contribute to the formation of six ``big shocks'' like in the solution of the Cauchy problem 
with initial datum $V$. Group B comprises all the other wave generated at $t=0$, which in the following will be regarded as perturbation waves. In~\S~\ref{Ss:qualitativeanalysis} we also introduce groups of waves generated at interactions occurring at times $t>0$. They will also be regarded as perturbation waves in the following. Note that perturbation waves are important, even if they are small, because they contribute to the formation of infinitely many shocks.
\item[\S~\ref{Ss:quantitativeanalysis}:] We establish quantitative bounds on the total strength of the waves belonging to the various groups introduced in~\S~\ref{Ss:qualitativeanalysis}. 
\item[\S~\ref{ss:shockgeneration}:] We eventually establish the results concerning the shock formation, see Lemmas~\ref{l:collapse} and~\ref{l:collapse2}. In particular, we show that the wave front-tracking approximation $U^\nu$ contains six ``big shocks'' like the solution of the Cauchy problem with initial datum $V$, see the discussion in~\S~\ref{sss:wpgeneration}. 
\item[\S~\ref{ss:conclusion}:] We eventually conclude the proof of Proposition~\ref{p:perturbation}. In particular, we firstly provide a bound from below on the number of shock fronts in the wave front-tracking approximation $U^\nu$, see Lemma~\ref{l:almeno}. Next, we pass to the limit $\nu \to 0^+$ and we conclude that the number of shocks of the limit solution is infinite on a given compact set. The limit analysis relies on fine properties of the wave front-tracking approximation established by Bressan and LeFloch~\cite{BressanLeFloch}. 
\end{itemize}
We conclude this paragraph with two technical remarks. First, as pointed out in~\S~\ref{ss:wft} in this paper we use the version of the wave front-tracking approximation discussed in the book by Bressan~\cite{Bre}. This version involves the use of two kinds of procedures to solve wave interactions: the \emph{accurate} Riemann solver and the \emph{simplified} Riemann solver. Whether one or the other is used depends on the product of the strength of the incoming waves, see the discussion at the beginning of~\S~\ref{sss:nonphysical} and the analysis in~\cite[Chapter 7]{Bre} for more detailed information. 
To simplify the exposition, in~\S~\ref{Ss:qualitativeanalysis},~\S~\ref{Ss:quantitativeanalysis} and~\S~\ref{ss:shockgeneration} we pretend we always use the \emph{accurate} Riemann solver. The fact that there are actually two kinds of solvers is taken into account in~\S~\ref{ss:conclusion}. 
 
Second, to simplify the notation in the following we denote by 
$\mathcal O(1)$ any quantity which is uniformly bounded and bounded away from $0$, namely there are universal constants $c,C>0$ that satisfy
$$
 0 < c \leq \mathcal O(1) \leq C.
$$. 
\subsection{Preliminary considerations}
\label{ss:preli}
In this paragraph we collect various remarks that we will use in the following. 
First, we fix $\varepsilon>0$ sufficiently small so that Lemmas~\ref{L:ie},~\ref{C:22shocks},~\ref{l:wp1},~\ref{l:wp1bis},~\ref{l:startlip},~\ref{l:solvecw} apply.
Next, we recall formula~\eqref{e:allparameters}: 
\begin{align*}
&v_I - v_{I\, I\, I} = \omega &&\rho =\mathcal O(1) \omega^{-1}, &&\widetilde T=\mathcal O(1) \omega^{-1}, &&\omega=\varepsilon^{3}, &&\delta = \varepsilon,\\ 
& \zeta_w = \varepsilon/2 , &&\eta  = \varepsilon^2,  && \zeta_c = \varepsilon^9, && r = \varepsilon^{10}/2.
\end{align*}
This in particular implies 
\begin{subequations}
\label{e:parameters}
\begin{align} 
&  \zeta_w \omega + \zeta_c \rho + r \rho < \varepsilon \omega <  \varepsilon^{3/4}  \zeta_w \eta  \label{e:c:parametri5} \\
 &  r < \varepsilon \zeta_c <     \varepsilon\omega < \varepsilon \zeta_w \label{e:c:parametri3} 
 \end{align}
\end{subequations}
We will use the above inequalities in the following.

 We recall that the intervals $\mathfrak R_\ell, \dots, \mathfrak R_r$ are as in~\eqref{e:regions} and that the function $V$ is defined in~\eqref{e:VI}. By construction, 
we have 
\begin{equation}
\label{e:totvarV}
 \TV V \leq \mathcal O(1) \omega
\end{equation}
and 
\begin{equation}
\label{e:estimateczero}
 \| V - U_I \|_{C^0} \leq O(1) \omega, \qquad \| V \|_{C^0} \leq O(1) (\delta + \omega). 
\end{equation}
We can infer from~\eqref{e:gnl1},~\eqref{e:uduesolve},~\eqref{e:utresolve},\eqref{e:statedef},\eqref{e:regions} and ~\eqref{e:errrew} that the length of $\mathfrak R_w$ is $\mathcal O(1) \omega$ because the length of $\mathfrak R^3_\ell, \mathfrak R^1_\ell, \mathfrak R^3_r, \mathfrak R^1_r$ is $\mathcal O(1) \omega\eta$ while the length of $\mathfrak R^2_\ell, \mathfrak R^2_r$ is $\mathcal O(1) \omega$. Since $\Psi(0)=\vec 0$, from~\eqref{e:psi},~\eqref{e:c:parametribis} and~\eqref{e:parameters} we get that
\begin{equation}
\label{e:vartotpsi}
 \| \Psi \|_{C^0} \leq \TV \Psi \leq 
 \mathcal O(1) \zeta_w \omega + \mathcal O(1) \zeta_c \rho < \mathcal O(1) \varepsilon\omega. 
\end{equation}
Also, we recall that each component of $\Phi^-$ and $\Phi^+$ is monotone and that $\Phi^-$ and $\Phi^+$ both attain the value $\vec 0$. 
This implies that 
\begin{equation}
\label{e:vartotphi}
\begin{split}
 \| \Phi^- \|_{C^0}+ \| \Phi^+ \|_{C^0} \leq \TV \, \Phi^- + \TV \, \Phi^+
 &\leq |V (-\rho)+ \Psi (-\rho)|+ |V (\rho)+\Psi(\rho)| \\
 & \leq |U_{I}|+|U_{I\!I\!I}|+ \mathcal O(1) (\zeta_{c}\rho + \zeta_{w} \omega ) \\
 & \leq \mathcal O(1) \big[ \delta + \omega + \zeta_{c}\rho+ \zeta_{w} \omega 
   \big]. \\
\end{split}
\end{equation}
By recalling~\eqref{e:allparameters}, \eqref{e:totvarV} and~\eqref{e:vartotpsi} we conclude that
\begin{equation}
\label{e:totvaruzero}
    \TV \widetilde U \leq \mathcal O(1)  \varepsilon. 
\end{equation}
Owing to~\eqref{e:palla}, we have 
\begin{equation}
\label{e:totvarpert}
 \| U_0 - \widetilde U \|_{C^0} + \TV (U_0 - \widetilde U) \leq 
 \mathcal O(1) r\rho
\end{equation}
If $U_0$ satisfies~\eqref{e:palla}, which means that $U_{0}$ is a perturbation of $\widetilde U$, then 
\begin{equation}
\label{e:veperturbation}
 U_0(x) = V(x) + \Psi (x) + \big[U_0(x) - \widetilde U(x)\big] \quad 
 \text{for every $x \in ]-\rho, \rho[$.} 
\end{equation} 
{By the explicit expression of $V$ and by~\eqref{e:gnl} we infer that $|V'(x)| \leq \mathcal O(1) \eta^{-1}$ for every $x\in ]-\rho, \rho[$.  By using~\eqref{e:psi},~\eqref{e:palla} and~\eqref{e:veperturbation} we arrive at 
 \begin{equation}
\label{e:derivata}
 |U'_0(x) |\leq \mathcal O(1)  \eta^{-1}  \quad 
 \text{for every $x \in ]-\rho, \rho[$.} 
\end{equation}  }
By taking into account~\eqref{e:estimateczero},~\eqref{e:vartotpsi},~\eqref{e:palla} and~\eqref{e:parameters}, we infer from~\eqref{e:allparameters} and~\eqref{e:veperturbation} that 
\begin{align}
\label{e:distancefromUno}
 | U_0(x) - U_I | &\leq | U_0(x) - \widetilde U(x) | +|\Psi(x)|+|V(x) - U_I | 
 \\
 \notag & \leq r+ \mathcal O(1) \varepsilon \omega  + \mathcal O(1) \omega  \leq \mathcal O(1) \varepsilon^{3} 
 \quad \text{for every $x \in ]- \rho, \rho[$.}
\end{align}
We point out that by estimates~\eqref{e:c:parametri2},~\eqref{e:c:parametri5}, \eqref{e:totvarV}~\eqref{e:vartotpsi} and~\eqref{e:totvarpert} one has the bound
\begin{equation}
\label{e:totvarU0}
 \TV U_{0} \leq \mathcal O(1) \omega \leq \mathcal O(1) \varepsilon^{3}
 \quad \text{ on $]-\rho, \rho[$}
\end{equation}
Since $U_I = (\delta, 0, - \delta )$, owing to~\eqref{e:c:parametribis} we arrive at 
\begin{equation}
\label{e:stimacizero}
 | U_0(x) | \leq 
 \mathcal O(1) \varepsilon, 
 \quad \text{for every $x \in ]- \rho, \rho[$.}
\end{equation}
\begin{remark}\label{rem:qrho}
We point out that the values attained on $]-2q,2q [ \times ]0, 2 \widetilde T[$ by the admissible solution of the Cauchy problem are only determined by the behavior of the initial datum on $]-\rho, \rho[$. This follows by combining our choice~\eqref{e:c:parametrirho} of $\rho$ with the finite propagation speed, more precisely with~\eqref{e:boundautovalori2}. Indeed, we have 
$$
 \rho - 2 q \ge 12 \widetilde T \ge \max_{|U| \leq 1, i=1, 2, 3} |\lambda_i (U)|
 \cdot 2 \widetilde T. 
$$
In the following, we will only be concerned with the behavior of the initial datum on the interval $]-\rho, \rho[.$ This is justified by the previous considerations and by the fact that we are only interested in the behavior of the solution on $]-2q,2q [ \times ]0, 2 \widetilde T[$. 
\end{remark}

\subsection{Wave front-tracking approximation: initial datum}
\label{ss:wft:id}
In this paragraph we discuss the wave-front tracking approximation of the initial datum. We recall that the intervals $\mathfrak R_\ell, \dots, \mathfrak R_r$ are defined in~\eqref{e:regions}. 

\subsubsection{Mesh definition}
\label{sss:mesh}
We fix an approximation parameter $\nu>0$ and a mesh size $h_\nu>0$. We require that $h_\nu \to 0^+$ when $\nu \to 0^+$. We 
choose $x^\nu_{ 0} < x^\nu_{ 1}< \dots <
 x^\nu_{m_\nu}$ in $]-\rho, \rho[$ so that 
\begin{subequations}\label{e:approx0}
\begin{equation}
\label{e:meshsize}
(1-\varepsilon)  h_\nu \leq 
 x^\nu_{i+1} - x^\nu_{i}  \leq    h_\nu 
 \quad \text{for every $i=0, \dots, m_\nu -1$}. 
\end{equation} 
If $h_\nu$ is sufficiently small, one can as well assume that the extrema of the intervals $\mathfrak R_\ell, \dots, \mathfrak R_r$ are all contained in the set $\big\{ 
x^\nu_{0} , \dots, 
 x^\nu_{m_\nu} \big\}$. Define the wave-front tracking approximation of the initial datum by setting 
 \begin{equation}
 \label{e:unuzero}
 U^\nu_0(x) : = U_0 (x^\nu_i)
 \qquad \quad \text{for $x \in ]x^\nu_i, x^\nu_{i+1}[$ 
 and $i=0, \dots, m_\nu -1$}. 
 \end{equation}
 \end{subequations}
We now describe the waves generated at the grid points
$x^\nu_{0} , \dots, 
 x^\nu_{m_\nu}$ by separately considering the regions 
 $\mathfrak R_\ell, \dots, \mathfrak R_r$.

\subsubsection{Waves generated in $\mathfrak R_{c}=\mathfrak R_\ell \cup \mathfrak R'_\ell \cup\mathfrak R''_\ell\cup \mathfrak R_m \cup \mathfrak R'_r\cup\mathfrak R''_r\cup\mathfrak R_r$} 
\label{sss:erreelle}
We only focus on the analysis of the interval $\mathfrak R_\ell$ because the analysis of the other intervals is entirely similar. 

We fix $x^\nu_i \in \mathfrak R_\ell$ and we consider the Riemann problem between the states 
\[U^- : = \lim_{x \uparrow (x^{\nu }_i) ^{-} } U^{\nu}_0(x)=U_{0}(x_{i-1}^{\nu}) \text{ (on the left),}\quad
U^+ : = \lim_{x \downarrow (x^{\nu }_i) ^{+} } U^{\nu}_0(x)=U_{0}(x_{i}^{\nu}) \text{ (on the right).}\]
\begin{claim}
If~\eqref{e:c:parametri5} and~\eqref{e:c:parametri3} hold, then
the states $U^{-}, U^{+}, U_{I}:=U_{I}$ satisfy the hypotheses of Lemma~\ref{l:wp1} with the choice 
$b=\zeta_{c}(x_{i}^{\nu}-x_{i-1}^{\nu})$
\end{claim}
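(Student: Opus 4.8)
The plan is simply to verify, one at a time, the three hypotheses of Lemma~\ref{l:wp1}, taking $U_I=(\delta,0,-\delta)$ as in~\eqref{e:cosaedelta} and $b:=\zeta_c(x^\nu_i-x^\nu_{i-1})$ as in the statement. Two of the three conditions are immediate: since $\delta=\varepsilon$ by~\eqref{e:c:parametribis}, one has $|U_I|=\sqrt2\,\varepsilon\le 1/2$ for $\varepsilon$ small, which gives the smallness requirement on $U_I$; and since $U^-=U_0(x^\nu_{i-1})$ with $x^\nu_{i-1}\in\,]-\rho,\rho[$, estimate~\eqref{e:distancefromUno} yields $|U^--U_I|\le\mathcal O(1)\varepsilon^3$, which is hypothesis~\eqref{e:hyp1} with room to spare.

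The real content is hypothesis~\eqref{e:hyp2a}. Here I would start from the decomposition~\eqref{e:veperturbation}, $U_0=V+\Psi+(U_0-\widetilde U)$ on $]-\rho,\rho[$, and compute the increment of each summand between the two consecutive grid points $x^\nu_{i-1}$ and $x^\nu_i$. The key observation is that both grid points lie in $\mathfrak R_\ell$ (all grid points sit in $]-\rho,\rho[$ and $x^\nu_{i-1}<x^\nu_i$ already lies in $\mathfrak R_\ell$), so $V$ is constant and $\Psi'$ is constant along the segment joining them. Consequently: (i) the increment of $V$ vanishes, by~\eqref{e:VI}; (ii) since $\mathfrak R_\ell\subseteq\mathfrak R_c$ by~\eqref{e:errec}, the derivative $\Psi'$ is the constant vector $-\zeta_c\vec r_{1I}-\zeta_c\vec r_{2I}+\zeta_c\vec r_{3I}$ there, whence
\[
\Psi(x^\nu_i)-\Psi(x^\nu_{i-1})=(x^\nu_i-x^\nu_{i-1})\big(-\zeta_c\vec r_{1I}-\zeta_c\vec r_{2I}+\zeta_c\vec r_{3I}\big)=-b\,\vec r_{1I}-b\,\vec r_{2I}+b\,\vec r_{3I},
\]
which is exactly the ``main term'' of~\eqref{e:hyp2a} with the prescribed value of $b$; (iii) by~\eqref{e:palla} the increment of $U_0-\widetilde U$ is at most $r\,(x^\nu_i-x^\nu_{i-1})=(r/\zeta_c)\,b$ in modulus. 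Adding these up, the left-hand side of~\eqref{e:hyp2a} is bounded by $(r/\zeta_c)\,b$, and inequality~\eqref{e:c:parametri3} (namely $r<\varepsilon\zeta_c$) turns this into $<\varepsilon b$, as required.

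It remains to record the side condition $0<b<\varepsilon$: positivity is clear, and $b=\zeta_c(x^\nu_i-x^\nu_{i-1})\le 2\zeta_c\rho<\varepsilon$ for $\varepsilon$ small, since $\zeta_c\rho$ is controlled through~\eqref{e:c:parametri5}. I do not expect a genuine obstacle here: the argument is pure bookkeeping, and the only thing that has to be arranged is that the single choice $b=\zeta_c(x^\nu_i-x^\nu_{i-1})$ simultaneously keeps $b$ below $\varepsilon$ and keeps the error term below $\varepsilon b$ — which is precisely what the scaling inequalities~\eqref{e:c:parametri5} and~\eqref{e:c:parametri3} are designed to guarantee. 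The one point to keep in mind, already taken care of by the choice of $\varepsilon$ in~\S~\ref{ss:preli}, is the harmless clash between the parameter $\varepsilon$ of Proposition~\ref{p:perturbation} and the threshold also called $\varepsilon$ in Lemma~\ref{l:wp1}: every inequality above holds with the latter threshold once the former is taken small enough.
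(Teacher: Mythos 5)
Your proof is correct and follows essentially the same route as the paper's: hypothesis~\eqref{e:hyp1} from~\eqref{e:distancefromUno}, and hypothesis~\eqref{e:hyp2a} via the decomposition~\eqref{e:veperturbation}, the constancy of $V$ and of $\Psi'$ on $\mathfrak R_\ell\subseteq\mathfrak R_c$, the Lipschitz bound~\eqref{e:palla} on $U_0-\widetilde U$, and the inequality $r<\varepsilon\zeta_c$ from~\eqref{e:c:parametri3}. Your explicit checks of the side conditions $|U_I|\le 1/2$ and $0<b<\varepsilon$ are left implicit in the paper but are harmless additions.
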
 
\begin{proof}
Hypothesis~\eqref{e:hyp1} in the statement of Lemma~\ref{l:wp1} follows by~\eqref{e:distancefromUno}. Next, we focus on hypothesis~\eqref{e:hyp2a}. We use~\eqref{e:veperturbation} and we recall that $V$ is constant on each connected component of $\mathfrak R_c$, while $\Psi'= \zeta_c (-\vec r_{1I} -\vec r_{2I}+\vec r_{3I} ).$ This implies that, if 
$b=\zeta_{c}(x_{i}^{\nu}-x_{i-1}^{\nu})$, then 
\[
|U^{+}-U_{-} + b \vec r_{1 I} +b \vec r_{2 I} - b \vec r_{2 I}|=|(U_0- \widetilde U)(x_{i}^{\nu})-(U_0- \widetilde U)(x_{i-1}^{\nu})|\stackrel{\eqref{e:palla}}{\leq} r(x_{i}^{\nu}-x_{i-1}^{\nu}),
\]
which owing to~\eqref{e:c:parametri3} gives inequality~\eqref{e:hyp2a}.
\end{proof}

{\sc Conclusion:} By using Lemma~\ref{l:wp1}, we conclude that the only waves created in the open set $\mathfrak R_c$ are 1-, 2- and 3-shocks. In particular, no rarefaction waves are generated. Moreover, owing to~\eqref{e:soloshock} the total variation of all these waves is bounded by $\mathcal O(1) \zeta_{c} \rho.$

\subsubsection{Waves generated in $\mathfrak R_{w}=\mathfrak R^3_\ell \cup \mathfrak R^3_r\cup\mathfrak R^2_\ell\cup\mathfrak R^2_r\cup\mathfrak R^1_\ell\cup\mathfrak R^1_r$} 
\label{sss:id:cw}
We only focus on the analysis of the interval $\mathfrak R^3_\ell$ since the analysis of the other intervals is entirely similar. 
We fix $x^\nu_i \in \mathfrak R^3_\ell$ and we consider the Riemann problem between the states 
\[U^- : = \lim_{x \uparrow x^{\nu }_i } U^{\nu}_0(x)=U_{0}(x_{i-1}^{\nu}) \text{ (on the left),}\quad
U^+ : = \lim_{x \downarrow x^{\nu }_i } U^{\nu}_0(x)=U_{0}(x_{i}^{\nu}) \text{ (on the right).}\]
\begin{claim}
Assume that $U^{-}, U^{+}$ are as at the previous line and that $V^{-}:=V(x_{i-1}^{\nu})$. 
Let $\xi>0$ be the strength of the 3-shock between $V^-$ (on the left) and $V(x_{i}^{\nu})$
(on the right), namely
$$
 V(x_{i}^{\nu}) = D_3 [\xi, V^- ]. 
$$
If $b=\zeta_{w}(x_{i}^{\nu}-x_{i-1}^{\nu})$, then all the hypotheses of Lemma~\ref{l:wp1bis} are satisfied.
\end{claim}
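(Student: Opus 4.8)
The plan is to verify, item by item, all the hypotheses of Lemma~\ref{l:wp1bis} with the identifications $U_I$ as in~\eqref{e:cosaedelta}, $U^-=U_0(x^\nu_{i-1})$, $U^+=U_0(x^\nu_i)$, $V^-=V(x^\nu_{i-1})$, $b=\zeta_w(x^\nu_i-x^\nu_{i-1})$, and $\xi$ the strength of the $3$-shock joining $V^-$ to $V(x^\nu_i)$. Note first that $(x^\nu_{i-1},x^\nu_i)\subseteq \mathfrak R^3_\ell$ (the grid was chosen so that the endpoints of the $\mathfrak R$'s are grid points), that on $\mathfrak R^3_\ell$ the function $V$ is a $3$-compression wave along the straight $3$-wave fan curve, and that $\lambda_3$ is strictly monotone along $\vec r_3$; hence $\xi>0$ and $V(x^\nu_i)=D_3[\xi,V^-]$ exactly. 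The bound $|U_I|<1/2$ is immediate from~\eqref{e:cosaedelta} for $\varepsilon$ small, and hypothesis~\eqref{e:hyp1} is precisely~\eqref{e:distancefromUno}. What remains is to check~\eqref{e:constraintetaxi} and the third alternative~\eqref{e:hyp2aCompression3}, which is the relevant one because on $\mathfrak R^3_\ell$ the increment of $V$ runs along the $3$-family curve.

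For~\eqref{e:hyp2aCompression3} the argument is the algebraic core of the claim. Using the decomposition~\eqref{e:veperturbation} valid on $]-\rho,\rho[$ one writes $U^+-U^-=[V(x^\nu_i)-V^-]+[\Psi(x^\nu_i)-\Psi(x^\nu_{i-1})]+[(U_0-\widetilde U)(x^\nu_i)-(U_0-\widetilde U)(x^\nu_{i-1})]$. Here $V(x^\nu_i)-V^-=D_3[\xi,V^-]-V^-$ by definition of $\xi$, while on $\mathfrak R^3_\ell\subseteq \mathfrak R_w$ the function $\Psi$ has the constant slope $-\zeta_w\vec r_{1I}-\zeta_w\vec r_{2I}+\zeta_w\vec r_{3I}$ by~\eqref{e:psi}, so that $\Psi(x^\nu_i)-\Psi(x^\nu_{i-1})=-b\vec r_{1I}-b\vec r_{2I}+b\vec r_{3I}$ with $b=\zeta_w(x^\nu_i-x^\nu_{i-1})$. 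Substituting, the left-hand side of~\eqref{e:hyp2aCompression3} collapses exactly to $(U_0-\widetilde U)(x^\nu_i)-(U_0-\widetilde U)(x^\nu_{i-1})$, whose norm is at most $r\,(x^\nu_i-x^\nu_{i-1})=(r/\zeta_w)\,b$ by~\eqref{e:palla}; since $r<\varepsilon\zeta_w$ by~\eqref{e:c:parametri3}, this is $<\varepsilon b<b/4$ for $\varepsilon$ small, which is~\eqref{e:hyp2aCompression3}.

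The step I expect to require the most care is~\eqref{e:constraintetaxi}, since it involves the ratio $b/\xi$ and $\xi$ may be comparatively large (indeed $|V'|$ is of order $\eta^{-1}$). The bound $0<b<\varepsilon$ holds at once because $b\le\zeta_w h_\nu=(\varepsilon/2)h_\nu$ with $h_\nu$ small. For $0<\xi<\sqrt{\varepsilon b}$ I would use that, along the straight $3$-curve, $\xi=\mathcal O(1)\,|V(x^\nu_i)-V^-|\le \mathcal O(1)\,\eta^{-1}(x^\nu_i-x^\nu_{i-1})$ by the Lipschitz bound $|V'|\le \mathcal O(1)\,\eta^{-1}$ derived just before~\eqref{e:derivata}, and compare with $\sqrt{\varepsilon b}=\mathcal O(1)\,\varepsilon\sqrt{x^\nu_i-x^\nu_{i-1}}$: the inequality follows as soon as $h_\nu<\mathcal O(1)\,\varepsilon^{6}$, which is admissible since $h_\nu\to0^+$ as $\nu\to0^+$. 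Finally, for $|V^--U^-|<\sqrt{\varepsilon}\,b/\xi$ I would bound the left side by $|\Psi(x^\nu_{i-1})|+|(U_0-\widetilde U)(x^\nu_{i-1})|\le \|\Psi\|_{C^0}+\|U_0-\widetilde U\|_{C^0}\le \mathcal O(1)(\zeta_w\omega+\zeta_c\rho+r\rho)<\mathcal O(1)\,\varepsilon^{3/4}\zeta_w\eta$, using~\eqref{e:vartotpsi},~\eqref{e:totvarpert} and~\eqref{e:c:parametri5}, and the right side from below by $\sqrt{\varepsilon}\cdot\mathcal O(1)\,\zeta_w\eta$, because both $b$ and $\xi$ scale linearly in the mesh spacing $x^\nu_i-x^\nu_{i-1}$, so that $b/\xi=\mathcal O(1)\,\zeta_w\eta$ independently of the mesh. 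Comparing $\mathcal O(1)\,\varepsilon^{3/4}\zeta_w\eta$ with $\mathcal O(1)\,\varepsilon^{1/2}\zeta_w\eta$ then gives the inequality for $\varepsilon$ small; this cancellation of the mesh size in $b/\xi$, played against the $\varepsilon^{3/4}$ slack deliberately built into~\eqref{e:c:parametri5}, is the only genuinely delicate point.
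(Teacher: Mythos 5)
Your proposal is correct and follows essentially the same route as the paper: hypothesis~\eqref{e:hyp1} via~\eqref{e:distancefromUno}, the cancellation in~\eqref{e:hyp2aCompression3} through the decomposition~\eqref{e:veperturbation} and the constant slope of $\Psi$ on $\mathfrak R_w$ together with~\eqref{e:palla} and~\eqref{e:c:parametri3}, and the verification of~\eqref{e:constraintetaxi} using $|V'|\leq \mathcal O(1)\eta^{-1}$, the smallness of $h_\nu$, and the slack in~\eqref{e:c:parametri5}. The only (harmless) deviations are bookkeeping: you bound $\|U_0-\widetilde U\|_{C^0}$ by $\mathcal O(1)r\rho$ where the paper uses the pointwise bound $r$, and you compare against $\varepsilon^{3/4}\zeta_w\eta$ rather than $\varepsilon\,\zeta_w\eta$; both variants close the argument.
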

\begin{proof}
Hypothesis~\eqref{e:hyp1} in the statement of Lemma~\ref{l:wp1} follows by~\eqref{e:distancefromUno}. Next, we point out that the condition $0< b <\varepsilon$ is satisfied provided that $\nu$ is sufficiently small. Indeed, $b \leq  \mathcal O(1) \zeta_w h_\nu$ and $h_\nu \to 0^+$ when $\nu \to 0^+$. 

To check the other hypotheses, we first recall that $x^\nu_i \in \mathfrak R^3_\ell$. By combining  the explicit expression of $V$~\eqref{e:VI} with~\eqref{e:gnl1} and~\eqref{e:gnl2} we infer that the derivative of $V$ satisfies $V'(x) = \mathcal O(1) \eta^{-1}$. 
This implies that 
$\xi = \mathcal O(1) (x_{i}^{\nu}-x_{i-1}^{\nu}) \eta^{-1}$ and hence that 
\[
   \sqrt{\varepsilon} \frac{b}{ \xi} = \mathcal O(1) \sqrt{\varepsilon }\zeta_w \eta.
\]
Next, we plug~\eqref{e:vartotpsi} and~\eqref{e:totvarpert} into~\eqref{e:veperturbation}
and we get that the first condition in~\eqref{e:constraintetaxi} is satisfied:
\begin{equation}
\label{e:ufferbacco}
\begin{split}
 |V^- - U^-| =  | \Psi ( x_{i-1}^{\nu}) + U_0 (x_{i-1}^{\nu})- \widetilde U(x_{i-1}^{\nu}) | &
\leq \mathcal O(1) \big(\zeta_w \omega  + \zeta_c \rho+ r  \big)
\\
 &  \leq \mathcal O(1)  \varepsilon  \zeta_w \eta   <
 \sqrt{\varepsilon} \frac{b}{ \xi}. \\
 \end{split}
\end{equation}
To establish the last inequality we used~\eqref{e:c:parametri5}. The condition $\xi^2 < \varepsilon b$ is satisfied because 
$$
 \xi^2 = \mathcal O(1) (x_{i}^{\nu}-x_{i-1}^{\nu})^2 \eta^{-2}=
 \mathcal O(1) h_\nu^2  \eta^{-2} < b = \mathcal O(1) \varepsilon \zeta_w h_\nu
$$ 
 provided that $h_\nu$ is sufficiently small. Finally, we check that~\eqref{e:hyp2aCompression3} holds. We use again~\eqref{e:veperturbation} and we recall that $\Psi'= \zeta_w (-\vec r_{1I} -\vec r_{2I}+\vec r_{3I} )$ on $\mathfrak R_w$. By using~\eqref{e:palla} and~\eqref{e:c:parametri3}, this implies
\begin{align*}
 |U^+-U^{-}& - D_3 [\xi, V^-] +V^{-}+ b \vec r_{1I} + b \vec r_{2I} - b \vec r_{3I}| 
 \\
 &=|U_{0}(x_{i}^{\nu})-U_{0}(x_{i-1}^{\nu}) - V(x_{i}^{\nu}) +V(x_{i-1}^{\nu}) - \Psi(x_{i}^{\nu}) +\Psi(x_{i-1}^{\nu}) |\\
 &=|(U_0- \widetilde U)(x_{i}^{\nu})-(U_0- \widetilde U)(x_{i-1}^{\nu})|{\leq} r(x_{i}^{\nu}-x_{i-1}^{\nu})< \varepsilon b
\end{align*}
All the hypotheses of Lemma~\ref{l:wp1bis} are therefore satisfied. This concludes the proof of the claim.
\end{proof}
By applying Lemma~\ref{l:wp1bis} and using~\eqref{e:tscompression} we arrive at the following conclusion. \\
{\sc Conclusion:} By Lemma~\ref{l:wp1bis}, the only waves created in the intervals 
$\mathfrak R^3_\ell$, $\mathfrak R^3_r$, $\mathfrak R^2_\ell$, $\mathfrak R^2_r$ $\mathfrak R^1_\ell$ and $\mathfrak R^1_r$ are 1-, 2- and 3-shocks. In particular, no rarefaction waves are generated. 

The total variation of all the 3-shocks generated in the intervals $\mathfrak R^3_\ell$, $\mathfrak R^3_r$ is $\mathcal O(1) \omega$ and 
the total variation of all the 1 and 2-shocks generated in the intervals $\mathfrak R^3_\ell$, $\mathfrak R^3_r$ is bounded by $\mathcal O(1) \zeta_{w}\omega \eta $. 

The total variation of all the 2-shocks generated in the intervals $\mathfrak R^2_\ell$, $\mathfrak R^2_r$ is $ \mathcal O(1) \omega  $ and 
the total variation of all the 1 and 3-shocks generated in the intervals $\mathfrak R^2_\ell$, $\mathfrak R^2_r$ is $ \mathcal O(1) \zeta_{w}\omega $.

The total variation of all the 1-shocks generated in the intervals $\mathfrak R^1_\ell$, $\mathfrak R^1_r$ is $\mathcal O(1) \omega$ and 
the total variation of all the 2 and 3-shocks generated in the intervals $\mathfrak R^1_\ell$, $\mathfrak R^1_r$ is $\mathcal O(1) \zeta_{w}\omega \eta$. 

\subsection{Wave front-tracking approximation: qualitative interaction analysis}
\label{Ss:qualitativeanalysis}
In this paragraph we split the waves of the wave front-tracking approximation $U^\nu$ into several groups, that are defined in the following. As we will see in~\S~\ref{ss:shockgeneration} and as we pointed out in the proof roadmap in~\S~\ref{ss:roadmap}, the waves of group A are the waves that will contribute to the formation of a wave pattern similar to the one of the solution with initial datum $V$ (see Figure~\ref{F:V}). The waves of groups B  can be heuristically speaking regarded as perturbation waves. 

We now define the groups A, B, C$_1$, $\dots$, C$_m$. 
In~\S~\ref{ss:wft:id} we discussed the waves that are generated at $t=0$. 
In particular, we proved that only shocks are generated at $t=0$. We split these waves into two groups:
\begin{itemize}
\item {\sc Shocks of group A:} group A comprises 
\begin{itemize}
\item the 3-shocks generated in the intervals $\mathfrak R^3_\ell$ and $\mathfrak R^3_r$ and their right extreme;
\item the 2-shocks generated in the intervals $\mathfrak R^2_\ell$ and $\mathfrak R^2_r$ and their right extreme;
\item the 1-shocks generated in the intervals $\mathfrak R^1_\ell$ and $\mathfrak R^1_r$ and their right extreme.
\end{itemize}
\label{def:strength}
We fix a shock $i \in \mathrm{A}$. Let $\V_{i}$ be its strength, which is defined as in \S~\ref{ss:wft}. Due to the conclusions at the end of~\S~\ref{sss:id:cw}, the total strength of all the shocks of group A is $\mathcal O(1) \omega$, namely 
\begin{equation}
\label{e:gruppoa}
 \sum_{i \in \mathrm{A}} \V_{i} = \mathcal O(1) \omega
\end{equation}
 \item {\sc Shocks of group B:} group B comprises all the shocks generated at $t=0$ in the interval $]-\rho, \rho[$ which are not comprised in group A. In other words, group B comprises 
 \begin{itemize}
 \item the 1, 2 and 3-shocks generated in the open interval $\mathfrak R_c$;
 \item the 1 and 2-shocks generated in the intervals $\mathfrak R^3_\ell$ and $\mathfrak R^3_r$;
\item the 1 and 3-shocks generated in the intervals $\mathfrak R^2_\ell$ and $\mathfrak R^2_r$;
\item the 2 and 3-shocks generated in the intervals $\mathfrak R^1_\ell$ and $\mathfrak R^1_r$.
\end{itemize}
Owing to the conclusions at the end of~\S~\ref{sss:erreelle} and of~\S~\ref{sss:id:cw}, the total strength of all these shocks can be bounded by 
\begin{equation}
\label{e:gruppob}
 \sum_{i \in \mathrm{B}} \V_{i} \leq \mathcal O(1) 
 \big( \rho \zeta_c
 + \omega \zeta_w  \big). 
\end{equation}
\end{itemize}
We now want to track the evolution of the shocks of groups A and B by discussing their interactions. 
For the time being, we do not take into account the fact that in some cases we have to use a \emph{simplified Riemann solver} (see~\S~\ref{ss:wft}). We will take into account the presence of non-physical waves in \S~\ref{ss:conclusion}. 
We separately consider the following cases: 
\begin{enumerate}
\item\label{item:1gruppi} We fix two shocks, $i$ and $j$, and we assume that $i$ is either a 1 or a 3-shock and $j$ is a 2-shock. Just to fix the ideas, let us assume that $i$ is a 3-shock. 
In this step, we do not care whether $i$ and $j$ belong to group A or B. Let $\V_i$ and $\V_j$ be their strengths and we assume that $i$ and $j$ interact at some point. By Lemma~\ref{L:ie} all the outgoing waves are shocks. By definition, we still call $i$ the outgoing 3-shock and we still call $j$ the outgoing 2-shock. 
Also, the outgoing $i$ belongs to the same group (A or B) as the incoming $i$, and the same happens for $j$. We say that the outgoing 1-shock is the \emph{new shock} which is created at the interaction. This new shock belongs neither to A nor to B: we define a new group C$_1$ in the following. 

We conclude by recalling some interaction estimates: let $\V_i'$ and $\V_j'$ be the strengths of $i$ and $j$ after the interaction. By Lemma~\ref{L:ie}, $\V_j' = \V_j$. Also, we recall~\cite[formula (7.31) p. 133]{Bre}, which states that 
\begin{equation}
\label{e:formulabressan}
 |\V'_i - \V_i| \leq \mathcal O(1) 
 \V_i \V_j . 
\end{equation}
Also,~\cite[formula (7.31) p. 133]{Bre} implies that the strength of the new shock generated at the interaction is bounded by 
$\mathcal O(1) 
 \V_i \V_j $. 
 
\item \label{item:2gruppi} We fix two shocks, $i$ and $j$, and we assume that $i$ is a 3-shock and $j$ is a 1-shock. Owing to the analysis in~\S~\ref{sss:13}, the outgoing waves are a 3-shock and a 1-shock. By definition, we still call $i$ the outgoing 3-shock $j$ the outgoing 1-shock. We say that the outgoing $i$ belongs to the same group as the incoming $i$. The same holds for $j$. Finally, we recall some quantitative interaction estimates: we term $\V_i$ and $\V_j$, $\V'_i$ and $\V'_j$ the strengths of $i$ and $j$ before and after the interaction, respectively. 
Owing to the analysis in~\S~\ref{sss:13}, 
\begin{equation}
\label{e:interae2}
 \V'_i = \V_i, \qquad 
 \V'_j = \V_j
\end{equation}
\item \label{item:3gruppi} We fix two shocks $i$ and $j$ and we assume that they are both 2-shocks. We term $\V_i$ and $\V_j$ their strengths and we assume that they interact at some point. We set
\begin{align*}
&a : = \delta, &&U^\sharp: = U_I = (\delta, 0, -\delta), &&s_1= - \mathcal V_i, &&s_2 = - \mathcal V_j
\end{align*}
and we claim that the hypotheses of Lemma~\ref{C:22shocks} are satisfied. The hypothesis
\begin{itemize}
\item $|U_\ell - U^\sharp| \leq \varepsilon a$ holds owing to~\eqref{e:distancefromUno} and to the fact that $a = \delta = \varepsilon$ (see~\eqref{e:c:parametribis}). 
\item $0 \leq \eta \leq \varepsilon a$ is satisfied because $\eta = \varepsilon^2$ owing to~\eqref{e:c:parametri2}. 
\item $|s_1|, |s_2| \leq \varepsilon$ are satisfied. Indeed, owing to Lemma~\ref{L:v} 
 the maximal strength of a 2-shock is bounded by the total variation of the $v$ component. The total variation of the $v$ component at $t=0$ satisfies $\mathrm{TotVar \,} v_0 \leq \mathcal O(1) \varepsilon^3$ by~\eqref{e:totvarU0}. Since the total variation of a scalar conservation law is a monotone non increasing function with respect to time~\cite{Kru}, we can conclude that $|s_1|, |s_2| \leq \varepsilon$. 
\end{itemize}
Lemma~\ref{C:22shocks} states then that the outgoing waves at the interaction point are three shocks. 
We now separately consider the following cases:
\begin{itemize}
\item if $i$ belongs to A and $j$ belongs to B, then we term $i$ the outgoing 2-shock and we prescribe that it still belongs to A. Note that the strength of $i$ after the interaction is $\V'_i = \V_i + \V_j$. We set $\V'_j=0$, in such a way that
\begin{equation}
\label{e:interae3}
 \V'_i + \V'_j = \V_i + \V_j .
\end{equation}
\item if $i$ and $j$ both belong to either A or B, then we proceed as follow. Just to fix the ideas, assume that $i$ is the fastest shock among the two, namely $i$ is on the left of $j$ before the interaction. By definition, we still call $i$ the outgoing 2-shock and we prescribe that it belongs to the same group (A or B) as the incoming shocks. We also set $\V'_j=0$, in such a way that~\eqref{e:interae3} holds. 
\end{itemize}
In both cases, we say that the outgoing 1 and 3-shock are \emph{new shocks} generated at the interaction. Note again that these new shocks belong neither to A nor to B: they will belong to the group C$_1$ defined in the following. 

\item \label{item:4gruppi} We fix two shocks, $i$ and $j$, and we assume that they belong to same family, which can be either 1 or 3. Just to fix the ideas, let us assume that they are both 3-shocks. Owing to the analysis in~\S~\ref{sss:1133}, the only outgoing wave is a 3-shock. 
We separately consider the following cases:
\begin{itemize}
\item if $i$ belongs to A and $j$ belongs to B, then we term $i$ the outgoing 3-shock and we prescribe that it still belongs to A. Note that the strength of $i$ after the interaction is $\V'_i = \V_i + \V_j$. We set $\V'_j=0$ in such a way that~\eqref{e:interae3} holds. 
\item if $i$ and $j$ both belong to either A or B, then we proceed as in case \ref{item:3gruppi}). Just to fix the ideas, assume that $i$ is the fastest shock among the two, namely $i$ is on the left of $j$ before the interaction. By definition, we still call $i$ the outgoing 3-shock and we say that it belongs to the same group as the incoming shocks. We also set $\V'_j=0$, in such a way that~\eqref{e:interae3} holds.  
\end{itemize} 
\end{enumerate}
We explicitly stress three properties following from the analysis of cases \ref{item:1gruppi})--\ref{item:4gruppi}) above. First, the outgoing waves are always shocks. Second, new shocks are only created when the interaction involves at least one 2-shock. Third, the new shocks created at the interaction are either 1- or 3-shocks, namely no new 2-shocks are created. 

We now focus on the new shocks created at interaction points. 
They can only be 1- and 3-shocks, since by Lemma~\ref{L:v} and by the definitions in \ref{item:1gruppi}) and \ref{item:3gruppi}) above no new 2-shock can arise.
We now collect them into a sequence of groups C$_{m}$ defined by recursion on $m\in\N$. 
\begin{itemize}
\item {\sc Group C$_1$:} we term C$_1$ the group of \emph{new shocks} that are generated at the following interactions:
\begin{itemize}
\item between a shock $i$ and a shock $j$ both belonging to group A. 
\item between a shock $i$ belonging to group A and a shock $j$ belonging to group B. 
\item between a shock $i$ and a shock $j$ both belonging to group B.
\end{itemize}
As mentioned before, C$_1$ only comprises 1- and 3-shocks. 
It follows from the analysis in~\S~\ref{sss:13} that if two shocks of group C$_1$ interact then either they basically cross each other or they merge: to label the outgoing waves at the interaction, we proceed as in case {\ref{item:4gruppi})} above. Note furthermore that, if a shock $i \in \mathrm{C}_1$ merges with a shock $j \in \mathrm{A} \cup \mathrm{B}$ of the same family, we term $j$ the outgoing shock and we set $\V'_i=0$, $\V'_j = \V_i + \V_j$. Hence, the only possibility for the generation of new waves is the one discussed at the next item. 
\item Group C$_{m+1}$: we term C$_{m+1}$ the group of \emph{new shocks} that are generated at interactions between a shock $i$ belonging to group C$_m$ and a 2-shock $j$ belonging to either group A or B.
As mentioned before, C$_m$ only comprises 1- and 3-shocks.
At interactions among shocks in C$_{m+1}$ the shocks can basically either basically cross each other or they can merge. In any case, no new shock is created: to label the outgoing waves at interaction points, we proceed as in case {\ref{item:4gruppi})} above.
Also, if a shock $i \in \mathrm{C}_{m+1}$ merges with a shock $j \in \mathrm{C}_m \cup \mathrm{A} \cup \mathrm{B}$ of the same family, we denote by $j$ the outgoing shock and we set $\V'_i=0$: in this way equality \eqref{e:interae3} is satisfied. 
\end{itemize}
In this way we have classified all the shocks of the wave-front tracking approximation $U^\nu$. 
\subsection{Wave front-tracking approximation: quantitative interaction estimates}
\label{Ss:quantitativeanalysis}
This paragraph aims at establishing Lemma~\ref{l:quantitative} below. In the statement, $\mathcal V_i$ denotes as usual the strength of the shock $i$.  
\begin{lemma}
\label{l:quantitative}
There is a constant $K>0$ such that, if 
the constant $\varepsilon$ in the statement of Proposition~\ref{p:perturbation} is sufficiently small, then we have the following estimates: for every $ t >0 $
\begin{subequations}
\label{e:shock}
\begin{align}
\label{e:shockA}
 &\sum_{i_A \in \mathrm{A}} \V_{i_A} ( t^+) \leq 
 K \omega,
\\
\label{e:shockB}
 & \sum_{i_B \in \mathrm{B}} \V_{i_B} ( t^+) \leq 
 K \omega \varepsilon, 
\\
\label{e:shockC}
 & \sum_{i \in \mathrm{C}_m} \V_{i} (t^+) \leq 
 (2 K \omega)^{m+1}. 
\end{align}
\end{subequations}
\end{lemma}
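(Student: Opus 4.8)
The plan is to establish \eqref{e:shockA}, \eqref{e:shockC} and \eqref{e:shockB} in that order, with a single universal constant $K$ fixed at the end, using only the qualitative classification of \S\ref{Ss:qualitativeanalysis} together with the interaction estimates \eqref{e:interae2}, \eqref{e:formulabressan} already recorded there (and, as is in force in this part of the paper, pretending the accurate Riemann solver is always used). Throughout, by Remark~\ref{rem:qrho} I only track the waves originating in $]-\rho,\rho[$, so that by \eqref{e:totvarU0} the relevant part of the initial datum has total variation $\mathcal O(1)\omega$; standard front-tracking estimates then yield $\TV U^\nu(t,\cdot)\leq\mathcal O(1)\omega$ for all $t$ and a Glimm interaction potential $Q$ with $Q(0)\leq\mathcal O(1)\omega^2$ and $\sum_{\text{interactions}}\V_i^{\mathrm{in}}\V_j^{\mathrm{in}}\leq\mathcal O(1)Q(0)\leq\mathcal O(1)\omega^2$ (here one uses that $\TV$ is small and that, as established in \S\ref{ss:wft:id} and \S\ref{Ss:qualitativeanalysis}, every front is a shock, so $\sum_i\V_i(t^+)=\mathcal O(1)\TV U^\nu(t,\cdot)$). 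Since group $\mathrm A$ is a sub-family of all shocks, this already gives \eqref{e:shockA}.

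For \eqref{e:shockC} I would argue by induction on $m$, the geometric input being the speed-separation remark $\lambda_1<-5/2<-2<\lambda_2<2<3<\lambda_3$ from \eqref{e:boundautovalori}: a $1$-shock travels left faster than any $2$-shock and a $3$-shock travels right faster than any $2$-shock, so—since no new $2$-shock is ever created and $2$-shocks only merge among themselves without inflating the count—a given $1$- or $3$-shock crosses each $2$-shock at most once, and the total strength of all the $2$-shocks it ever meets is at most the total $2$-shock strength, which by Lemma~\ref{L:v} is $\leq\TV v(t,\cdot)\leq\TV v_0\leq\mathcal O(1)\omega$. Combining this with \eqref{e:interae2} (a $1$-$2$ or $2$-$3$ interaction leaves the $2$-strength unchanged) and \eqref{e:formulabressan} (it perturbs the $1$- or $3$-strength by $\mathcal O(1)\V_i\V_j$ and creates one new shock of strength $\leq\mathcal O(1)\V_i\V_j$), a bookkeeping argument shows that the total \emph{birth strength} $B_m$ of the shocks of group $\mathrm C_m$ (the sum of the strengths they carry at the interactions that create them) obeys $B_{m+1}\leq\mathcal O(1)\,\omega\,B_m$, while $B_1\leq\mathcal O(1)\,\omega^2$ because every shock of $\mathrm C_1$ is born at an interaction of two shocks of $\mathrm A\cup\mathrm B$ at least one of which is a $2$-shock (for the $2$-$2$ contribution one uses that the products of the strengths of merging $2$-shocks sum to $\leq\tfrac12(\TV v_0)^2=\mathcal O(1)\omega^2$). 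Hence $B_m\leq(\mathcal O(1))^m\omega^{m+1}$; and since shocks of $\mathrm C_m$ can only merge among themselves (conserving strength), be absorbed into lower-generation families (losing strength), or be perturbed by $\mathcal O(1)\V_i\V_j$ at crossings of $2$-shocks (with cumulative effect $\leq\mathcal O(1)\omega B_m$, again by the ``crosses-once'' bound), one gets $\sum_{i\in\mathrm C_m}\V_i(t^+)\leq(1+\mathcal O(1)\omega)B_m\leq(2K\omega)^{m+1}$ once $K$ exceeds the universal constants and $\varepsilon$ is small enough that $2K\omega<\tfrac12$.

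For \eqref{e:shockB} I would use that group $\mathrm B$ never gains new members and that a shock of $\mathrm B$ can change strength only by: (i) merging with another $\mathrm B$-shock (strength conserved); (ii) being absorbed into group $\mathrm A$ (strength set to $0$); (iii) absorbing a shock of some $\mathrm C_m$, so $\mathrm B$ grows by at most $\sum_{m\geq1}\sup_t\sum_{i\in\mathrm C_m}\V_i\leq\mathcal O(1)(K\omega)^2$, already controlled by \eqref{e:shockC}; or (iv) undergoing a $1$-$2$/$2$-$3$ interaction, where by \eqref{e:formulabressan} its strength grows by $\mathcal O(1)\V_i\V_j$, with cumulative effect over all of $\mathrm B$ bounded—by the same ``crosses-once'' argument—by $\mathcal O(1)\omega\cdot(\text{total }\mathrm B\text{ strength})$. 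Since by \eqref{e:gruppob} and \eqref{e:allparameters} one has $\sum_{i\in\mathrm B}\V_i(0^+)\leq\mathcal O(1)(\rho\zeta_c+\omega\zeta_w)=\mathcal O(1)\varepsilon^4=\mathcal O(1)\,\omega\varepsilon$, summing these contributions gives $\sum_{i\in\mathrm B}\V_i(t^+)\leq\mathcal O(1)\,\omega\varepsilon+\mathcal O(1)K^2\omega^2\leq K\,\omega\varepsilon$ for $\varepsilon$ small, because $\omega=\varepsilon^3$; this fixes the choice of $K$ and $\varepsilon$.

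I expect the main obstacle to be the geometric decay encoded in $B_{m+1}\leq\mathcal O(1)\,\omega\,B_m$: the crude a~priori bound $\TV U^\nu(t,\cdot)\leq\mathcal O(1)\omega$, and even the full potential bound $\sum_{\text{interactions}}\V_i^{\mathrm{in}}\V_j^{\mathrm{in}}\leq\mathcal O(1)\omega^2$, only give $\sum_{i\in\mathrm C_m}\V_i\leq\mathcal O(1)\omega$ (resp.\ $\leq\mathcal O(1)\omega^2$) uniformly in $m$, which is far too weak for large $m$. Extracting the extra factor $\omega$ per generation forces one to use the precise interaction geometry—each $1$- or $3$-shock meets each $2$-shock at most once, so the $2$-shocks seen by any fixed wave have total strength $\leq\mathcal O(1)\omega$—and a somewhat delicate accounting of how birth strength is redistributed by mergings into and out of group $\mathrm C_m$; everything else reduces to routine bookkeeping with the interaction estimates of \S\S\ref{sss:13}--\ref{ss:22} as compiled in \S\ref{Ss:qualitativeanalysis}.
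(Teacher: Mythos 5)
Your bound \eqref{e:shockA} via the global total–variation estimate is a legitimate shortcut, and your treatment of group B (using \eqref{e:gruppob}, the absorption of $\mathrm C_m$-shocks, and the transversal estimate \eqref{e:formulabressan}) matches the paper in spirit. The genuine gap is in the $\mathrm C_m$ part. With the labelling rules of \S~\ref{Ss:qualitativeanalysis} (which you adopt), a shock of generation $\mathrm C_{m'}$ with $m'>m$ that merges with a same-family shock of $\mathrm C_m$ transfers its whole strength to the $\mathrm C_m$ label; likewise $\mathrm C$-shocks can be absorbed into $\mathrm A\cup\mathrm B$ and thereby inflate the strengths entering later $1$-$2$/$2$-$3$ crossings. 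Consequently $\sum_{i\in\mathrm C_m}\V_i$ is \emph{not} controlled by ``birth strength of $\mathrm C_m$, conserved under merging, perturbed at $2$-crossings'': it also grows by the total strength absorbed from all higher generations, and the birth strength $B_{m+1}$ (computed from the \emph{current} strengths of $\mathrm C_m$-shocks at the $2$-crossings) inherits this contamination. Your recursion $B_{m+1}\leq\mathcal O(1)\,\omega B_m$ and the bound $\sum_{\mathrm C_m}\V_i\leq(1+\mathcal O(1)\omega)B_m$ therefore couple every generation to all higher ones, and a plain upward induction on $m$ is circular: to bound generation $m$ you already need the (as yet unproven) bounds for $m'>m$, and the crude a priori information ($\TV\leq\mathcal O(1)\omega$, potential $\leq\mathcal O(1)\omega^2$) is, as you yourself note, far too weak to absorb these terms uniformly in $m$.

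This is exactly the point the paper resolves by a different induction, namely a bootstrap in \emph{time}: one assumes \eqref{e:shockA}, \eqref{e:shockB}, \eqref{e:shockC} simultaneously for all $t<\bar t$ (they hold at $t=0$ by \eqref{e:allinizio}), and re-derives them at $t=\bar t$, so that the higher-generation feedback appears only through the already-assumed geometric tail $\sum_{m'>m}\sum_{\mathrm C_{m'}}\V_i\leq(2K\omega)^{m+2}/(1-2K\omega)$, which is lower order and is swallowed by the exponential factor $\exp\bigl(\mathcal O(1)\sum_{j\in\mathrm A\cup\mathrm D}\V_j\bigr)$. Your argument can be repaired by the same device (or by an equivalent ansatz-verification on the $B_m$'s), but as written the key closing mechanism is missing; note also that, for consistency, the paper's proof of the $\mathrm A$- and $\mathrm B$-bounds includes the corresponding absorption terms ($\sum_{j\in J^{i_A}_{\mathrm D}}\V_j$, respectively $\sum_{m\geq1}\sum_{\mathrm C_m}\V_i$), which is what your list of admissible strength changes for $\mathrm C_m$ omits.
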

\begin{proof}
We point out that owing to~\eqref{e:gruppoa} and~\eqref{e:gruppob} combined with~\eqref{e:c:parametri5} we can choose $K$ in such a way that we have the inequalities
\begin{equation}
\label{e:allinizio}
 \sum_{i_A \in \mathrm{A}} \V_{i_A} ( t=0) \leq 
 \frac{1}{2} K \omega, \qquad
 \sum_{i_B \in \mathrm{B}} \V_{i_B} ( t=0) \leq 
 \frac{1}{2} K (\omega \zeta_w + \rho \zeta_c )\leq 
 \frac{1}{2} K \omega \varepsilon. 
\end{equation}
The shocks of groups C$_m$, $m \in \mathbb N$, do not exist at $t=0$, but we can adopt the notation that their strength is $0$, in such a way that~\eqref{e:shockC} is formally satisfied. The proof of the lemma is based on the following argument: we assume that estimates~\eqref{e:shockA},~\eqref{e:shockB} and~\eqref{e:shockC} are satisfied for every $t < \bar t$ and we show that they are satisfied for $t= \bar t$. The technical details are organized in the following four steps. \\
\firststep\step{we make some preliminary considerations}. We first introduce a new notation: we denote by D the group 
\begin{equation}
\label{e:D}
 \mathrm D:= \mathrm B \cup \bigcup_{m =1}^\infty \mathrm{C}_{m}.
\end{equation}
In the above expression, the groups B and C$_m$ are as in \S~\ref{Ss:qualitativeanalysis}. 
Note furthermore that here an in the following we term \emph{groups} the sets $\mathrm A$, $\mathrm B$, $\mathrm C_m$, $\mathrm D$, while we use the term \emph{family} as a shorthand for \emph{characteristic family}.

Note that by combining all the inequalities in~\eqref{e:shock} and~\eqref{e:c:parametri2} we get that, if $\varepsilon$ is sufficiently small, then 
\begin{subequations}
\label{e:shocktot}
\begin{align}
\label{e:shockD}
 & \sum_{i \in \mathrm{D} } \V_{i} (t^+) \leq 
 K \omega\varepsilon + \sum_{m=1}^\infty 
 (2 K \omega)^{m+1} \leq 2K \omega \varepsilon 
 && \text{for every $ t < \bar t $}
 \\
\label{e:shockAD}
 &  \sum_{i \in \mathrm{A}\cup\mathrm{D} } \V_{i} (t^+) \leq 
 K \omega+ 2 K \omega\varepsilon \leq 2K\omega
 && \text{for every $ t < \bar t $}
\end{align}
\end{subequations}
Note furthermore that the quantities at the left hand side of~\eqref{e:shock},~\eqref{e:shockD} can only change at interaction times. 
 
\step{we establish the bound on $\sum_{i_A \in \mathrm{A}} \V_{i_A} $} Note that the only ways $\sum_{i_A \in \mathrm{A}} \V_{i_A} $ can change are the following interactions:
\begin{enumerate}
\item Interactions where a shock $i\in$ A with strength $\V_{i}$ merges with a shock $j$ with strength $\V_{j}$ of the same family and of the same group A. In this case~\eqref{e:interae3} ensures that $\sum_{i_A \in \mathrm{A}} \V_{i_A} $ does not change at this interaction. For this reason, in the following we neglect these interactions.
\item \label{item:2inter}  Interactions where a shock $i_A \in$A with strength $\V_{i_A}$ merges with a shock $j_{D}$ of the same family but of group D.
In this case $\V'_{i_A} = \V_{i_A} + \V_{j_{D}}$. Each shock $j_{D}$ of group $D$ may have at most one of these interactions: let $J^{i_{A}}_{D}$ be the subset of shocks of group $D$ that merge with $i_{A}$. 
\item \label{item:3inter} Interactions where a shock $i_A \in$A with strength $\V_{i_A}$ interacts with a shock $j$ of a different family. In this case by the  interaction estimate~\eqref{e:formulabressan} one has
\[
 \V'_{i_A} \leq \V_{i_A} + \mathcal O(1) \V_{i_A} \V_j = \V_{i_A} \Big(1 + \mathcal O(1) \V_j \Big) .
\]
Each shock $j$ may interact at most once with a given shock $i_{A}$ of a  different family.
\end{enumerate}
We recall that all shocks in group A are generated  at time $t=0$ and we track the evolution of a given shock $i_A \in $A between time $t=0$ and $t = \bar t$.  
If the shock $i_{A}$ only interacts with shocks $j \in J_{D}^{i_{A}}\subseteq$D and with waves $j_{1},\dots,j_{k}$ of different families then by~\ref{item:2inter}),~\ref{item:3inter}) above one has the inequality
\begin{align*}
 \V_{i_A} ( \bar t^+)
 & \leq 
 \left(\V_{i_A} (t = 0) + \sum_{j \in J_{D}^{i_{A}} } \V_{j} \right) \prod_{j \in A \cup D} \Big(1 + \mathcal O(1) \V_{j} \Big). 
 \end{align*}
 Note that the last factor in the above expression does not depend on $i_{A}$. Also, by using the inequality   $e^{x}\geq1+x$ we get 
 \[
 \prod_{j \in A \cup D} \Big(1 + \mathcal O(1) \V_{j} \Big) 
 \leq e^{\mathcal O(1)\sum_{j \in A \cup D}\V_{j}}.
 \]
 We now sum over all the shocks $i_A \in$A and be obtain
\begin{align*}
 \sum_{i_A \in \mathrm{A}} \V_{i_A} ( \bar t^+)
 & \leq 
 \left(\sum_{i_A \in \mathrm{A}}\V_{i_A} (t = 0) +\sum_{i_{A}\in\mathrm{A}} \sum_{j \in J_{D}^{i_{A}} } \V_{j} \right) \prod_{j \in A \cup D} \Big(1 + \mathcal O(1) \V_{j} \Big)
\\
 & \leq 
 \left(\sum_{i_A \in \mathrm{A}}\V_{i_A} (t = 0) +\sum_{j_{D}\in\mathrm{D}} \V_{j} \right) \exp\left({\mathcal O(1)\sum_{j \in A \cup D}\V_{j}}\right)
\end{align*}
The last inequality holds because two sets $J^{i_{A}}_{D}$ and $J^{i_{A}'}_{D}$ are disjoint subsets of D (keep in mind that we are neglecting the fact two shocks $i_A \in$A and $i_A' \in$A can merge).
We now plug the above inequality into~\eqref{e:allinizio} and we recall that by assumption at time $t < \bar t$ estimates~\eqref{e:shock} hold. Owing to~\eqref{e:shocktot} we obtain
\begin{equation}
\label{e:estimatesua}
\begin{split}
 \sum_{i_A \in \mathrm{A}} \V_{i_A} ( \bar t^+)
 & \leq \left(\frac{1}{2} K \omega+2K \omega \varepsilon \right) \exp\left({\mathcal O(1)K \omega }\right)
<K\omega
\end{split}
\end{equation}
provided that $\varepsilon$ is sufficiently small, since owing to~\eqref{e:c:parametri2} $\omega = \varepsilon^3$. Note that~\eqref{e:estimatesua} 
 implies that~\eqref{e:shockA} holds for $t= \bar t$ provided that~\eqref{e:shockB} and~\eqref{e:shockC} hold for $t < \bar t$ and~\eqref{e:allinizio} holds at $t=0$. 

\step{we control $\sum_{i_B \in \mathrm{B}} \V_{i_B} $} The only ways $\sum_{i_B \in \mathrm{B}} \V_{i_B} $ can increase are the following:
\begin{enumerate}
\item If a  a shock $i_B \in $B merges with a shock $j \in$A$\cup$B of the same family. In this case $\sum_{i_B \in \mathrm{B}} \V_{i_B} $ does not increase owing to~\eqref{e:interae3}. 
\item If a  shock $i_B \in$B with strength $\V_{i_B}$ merges with a shock $j \in \cup_{m\in\N}$C$_{m}$ of the same family. 
\item If a  shock $i_B \in$B with strength $\V_{i_B}$ interacts with a shock $j$ of a different family. 
\end{enumerate}
By repeating the same argument we used in {\sc Step 2} we conclude that, if the initial estimate~\eqref{e:allinizio} holds and moreover~\eqref{e:shock} holds for  $t<\bar t$, then by~\eqref{e:shocktot}
\begin{equation}
 \begin{split}
 \label{e:estimateB}
\sum_{i_B \in \mathrm{B}} \V_{i_B} ( \bar t^+)
 & \leq 
 \left( \sum_{i_B \in \mathrm{B}} \V_{i_B} (t= 0) +
 \sum_{m=1}^\infty \sum_{i \in \mathrm{C}_m} \; \V_{i} 
 \right) 
 \exp\left({\mathcal O(1)\sum_{j \in A \cup D}\V_{j}}\right)
\\
 & \leq 
 \Big( \frac{1}{2} K \omega \varepsilon +
 \frac{(2K\omega)^{2}}{1-2K\omega} 
 \Big) \exp\left({\mathcal O(1) K \omega}\right) \leq K \omega \varepsilon \\
\end{split}
\end{equation}
provided that $\varepsilon$ is sufficiently small, due to~\eqref{e:c:parametri2}. 
Inequality~\eqref{e:estimateB} implies that~\eqref{e:shockB} holds for $t = \bar t$ provided that~\eqref{e:shock} hold for $t < \bar t$ and~\eqref{e:allinizio} holds at $t=0$. \\
\step{we conclude the proof} To control $\sum_{i \in \mathrm{C}_1} \V^i$ we firstly recall that a shock of group C$_1$ can be generated when a shock $j_1$ belonging to either group A or B interacts with a shock $j_2$ belonging to either group A or B. As pointed out before, the strength of the outgoing new shock is bounded by $\mathcal O(1) \V_{j_1} \V_{j_2}$. We denote by $\V^0_{i}$ the strength of the shock $i\in \mathrm{C}_1$ at the time when the shock $i$ is generated. We then have 
 \begin{align*}
 \sum_{i \in \mathrm{C}_1} \V^0_{i} & \leq 
 \sum_{j_1 \in A} 
 \sum_{j_2 \in A} \mathcal O(1) \V_{j_1} 
 \V_{j_2}+ 
 \sum_{j_1 \in A} 
 \sum_{j_2 \in B} \mathcal O(1) \V_{j_1} 
 \V_{j_2} + 
 \sum_{j_1 \in B} 
 \sum_{j_2 \in B} \mathcal O(1) \V_{j_1} 
 \V_{j_2} \\ & 
 \leq K^2 \Big( \omega^2 + \omega^2 \varepsilon + (\omega \varepsilon )^2 
 \Big)
 \leq 2 K^2 \omega^2. 
 \end{align*}
Next, recall that $\sum_{i \in \mathrm{C}_1} \V_{i}(t)$ can increase not only when a new shock is generated, but also: 
\begin{enumerate}
\item if a shock $i \in $C$_1$ with strength $\V_i$ merges with a shock $j \in$ C$_m$, $m >1$ of the same family. 
\item if a shock $i \in $C$_1$ with strength $\V_{i}$ interacts
with a shock $j$ of a different family. 
\end{enumerate}
By arguing as in {\sc Step 2}, we find that if~\eqref{e:shock} holds for $t < \bar t$ and~\eqref{e:allinizio} holds at $t=0$, then
\begin{align*}
 \sum_{i \in \mathrm{C}_1} \V_{}i (\bar t^+) 
 &\leq \left( \sum_{i \in \mathrm{C}_1} \V^0_{i} +
 \sum_{m=2}^\infty \sum_{i \in \mathrm{C}_m} \; \V_{i} \right)\exp\left({\mathcal O(1)\sum_{j \in A \cup D}\V_{j}}\right)\\
 & \leq \left( 2 K^2 \omega^2 +\frac{(2K\omega)^{3}}{1-2K\omega} \right)
 \exp\left({\mathcal O(1)K \omega }\right)
 \leq (2 K \omega)^2 \phantom{\int_R}
\end{align*}
provided that $\varepsilon$ is sufficiently small, since owing to~\eqref{e:c:parametri2} $\omega = \varepsilon^3$. 
We have thus established~\eqref{e:shockC} for all $t>0$ when $m=1$. The case when $m>1$ can be handled in an entirely similar way. This concludes the proof of Lemma~\ref{l:quantitative}. 
\end{proof}
\subsection{Wave front-tracking approximation: shock generation analysis}
\label{ss:shockgeneration}
In this paragraph we finally show that the in wave-front tracking approximation one can recognize a wave pattern like the one of the solution of the Cauchy problem with initial datum $V$, see Figure~\ref{F:V}. In particular, 
in \S~\ref{sss:smalltime} we establish the generation of six ``big shocks'': Lemma~\ref{l:collapse} establishes the formation of two 1-shocks and two 3-shocks, while Lemma~\ref{l:collapse2} established the formation of two 2-shocks, which are moreover approaching. In~\S~\ref{sss:wpgeneration} we conclude the analysis of the wave pattern generation. 
\subsubsection{Shock formation: small times}
\label{sss:smalltime}
We recall that the interval $\mathfrak R^3_\ell$ is defined by formula~\eqref{e:regions} and we establish the following lemma. 
\begin{lemma}
\label{l:collapse}
 By the time $t = 6/5$, some of (or all of) the 3-shocks of group A generated at time $t=0$ in the interval $\mathfrak R^3_\ell$ merge into a single 3-shock with strength greater than $\omega \sqrt{\varepsilon}/2$.
\end{lemma}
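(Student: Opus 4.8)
The plan is to follow the group-A 3-shocks emitted at $t=0$ in the interval $\mathfrak R^3_\ell$ and to exploit the fact that, by construction, $V$ restricted to $\mathfrak R^3_\ell$ is a 3-compression wave: in the unperturbed picture all of its 3-characteristics focus at the point $(t,x)=(1,-\mathfrak q)$, and the whole reason for replacing $q$ by $\mathfrak q=q+3$ in \S~\ref{ss:sepcomp} was to push this focusing point strictly to the left of the region $x\approx -q$ where the large 2-shock of $\mathfrak R^2_\ell$ sits. As in \S~\ref{ss:roadmap} we pretend throughout to use only the accurate Riemann solver.

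\emph{Step 1: lower bound at $t=0$.} On $\mathfrak R^3_\ell$ the function $V$ runs along the straight 3-wave fan curve from $U_I$ to $\underline U'=D_3[\tau,U_I]$, so the 3-shock of $V$ between two consecutive grid values has strength equal to the corresponding increment of the $D_3$-parameter, and these increments telescope to $\tau$. By the analysis of \S~\ref{sss:id:cw} (Lemma~\ref{l:wp1bis}, case~\eqref{e:tscompression3}) the group-A 3-shock produced at each grid point of $\mathfrak R^3_\ell$ has strength at least that of the corresponding 3-shock of $V$; summing, the total strength of the group-A 3-shocks born in $\mathfrak R^3_\ell$ is at least $\tau$, and $\tau>\omega/2$ by Lemma~\ref{l:solvecw}.

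\emph{Step 2: the shocks coalesce before $t=6/5$.} All these 3-shocks travel with speed close to $\lambda_3$, and since $V$ is a 3-compression wave on $\mathfrak R^3_\ell$ the leftmost one is the fastest; moreover the spread of their speeds equals the length of $\mathfrak R^3_\ell$, which is $\mathcal O(1)\eta\tau$. The only perturbations of this focusing picture are the addition of $\Psi$ and of the $W^{1\infty}$-small term $U_0-\widetilde U$, the mesh discretization, and the interactions these shocks undergo; each of them changes the relevant speeds only by $\mathcal O(1)\varepsilon$ times the spread. Indeed $|\nabla\lambda_3|\le \mathcal O(1)\eta$, so on $\mathfrak R^3_\ell$ one has $|\lambda_3(U_0(x))-\lambda_3(V(x))|\le\mathcal O(1)\eta(\|\Psi\|_{C^0}+r)\le\mathcal O(1)\eta\varepsilon\omega$; interactions with 1-shocks leave the 3-speeds unchanged by \S~\ref{sss:13}; and, since the focusing happens near $x=-\mathfrak q<-q$ before these 3-shocks can reach the large 2-shock, up to the focusing time they only cross 2-shocks of group B, whose total strength is $\mathcal O(1)(\zeta_w\omega\eta+\zeta_c\rho)\ll\eta\tau$. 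Consequently the group-A 3-shocks of $\mathfrak R^3_\ell$ merge into a single 3-shock by a time $\le 1+\mathcal O(1)\varepsilon<6/5$; here one also uses that same-family shocks can only merge (\S~\ref{sss:1133}) and that a crossing with a 2-shock produces a single outgoing 3-shock which is not annihilated (Lemma~\ref{L:ie} together with~\eqref{e:formulabressan}). I expect this step to be the main obstacle: making rigorous the claim that the compression structure of $V$ on $\mathfrak R^3_\ell$ survives all of these perturbations well enough that the shocks coalesce into \emph{exactly one} front — and not into several separate ones — within the time window $[0,6/5]$.

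\emph{Step 3: lower bound on the merged shock.} Along the merging, 3--3 interactions add strengths (\S~\ref{sss:1133}), crossings with 1-shocks leave the strength unchanged (\S~\ref{sss:13}), and each crossing with a 2-shock $j$ multiplies the strength by at least $1-\mathcal O(1)\V_j$ by~\eqref{e:formulabressan}; since by Lemma~\ref{l:quantitative} the total strength of all 2-shocks ever present is $\mathcal O(1)\omega$, the cumulative factor is at least $1-\mathcal O(1)\omega$. Hence the merged 3-shock has strength at least $\frac{\omega}{2}\bigl(1-\mathcal O(1)\omega\bigr)>\frac{\omega\sqrt\varepsilon}{2}$ once $\varepsilon$ is small, which is the claim.
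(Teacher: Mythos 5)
Your Steps 1 and 3 are consistent with the paper's bookkeeping (the total initial strength of the group-A 3-shocks born in $\mathfrak R^3_\ell$ is indeed $\mathcal O(1)\omega$, and crossings can only cut a 3-front by a factor $1-\mathcal O(1)\sum_j\V_j$), but Step 2 contains a genuine gap, and it is precisely the step your whole lower bound rests on. Your key assertion is that every perturbation of the focusing picture shifts the relevant speeds by only $\mathcal O(1)\varepsilon$ times the speed spread over $\mathfrak R^3_\ell$. That is false for the 3--3 mergers inside the group itself: once a front has absorbed strength $\V$, its left and right states differ by $\V$, so its speed deviates from the characteristic value of the unperturbed compression by $\mathcal O(1)\eta\V$, and $\V$ can reach $\mathcal O(1)\omega$ --- the same order $\eta\omega$ as the entire spread $\lambda_3(U_I)-\lambda_3(\underline U')$ (see~\eqref{e:quantovale}). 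Hence self-interactions cannot be treated as small perturbations of the focusing pattern, and the claim that \emph{all} fronts coalesce into exactly one 3-shock by $t\le 1+\mathcal O(1)\varepsilon$ --- which you yourself flag as the main obstacle --- is not established; without it your Step 3 bound $\ge\tau\bigl(1-\mathcal O(1)\omega\bigr)$ does not follow.

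The paper deliberately avoids proving full coalescence: this is why the lemma says ``some of (or all of)'' and why the threshold is $\omega\sqrt{\varepsilon}$ rather than $\omega/2$. Its proof tracks only the two extreme 3-shocks $j_\ell$, $j_r$ emitted at the endpoints of $\mathfrak R^3_\ell$, shows (after checking that no group-A wave from other intervals can reach them before $t=6/5$, cf.~\eqref{e:separiamo}) that their initial speed gap is at least $\tfrac{11}{12}$ of the length of $\mathfrak R^3_\ell$, and then argues by dichotomy. If both extreme fronts keep strength below $\omega\sqrt{\varepsilon}$ on $]0,6/5[$, then their end states move only by $\mathcal O(1)\omega\varepsilon$ (group-D crossings) and their jumps are $<\omega\sqrt{\varepsilon}$, so the speed gap stays $\ge\tfrac56$ of the length of $\mathfrak R^3_\ell$; hence $j_\ell$ and $j_r$, and with them everything in between, merge by $t=6/5$ into one shock of strength $\ge\bigl(1-\mathcal O(1)\sum_{i\in D}\V_i\bigr)\mathcal O(1)\omega$. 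If instead one of them reaches strength $\omega\sqrt{\varepsilon}$ at some $\bar t<6/5$, then part of the group has already merged into the desired front, and for $t>\bar t$ it can only be eroded by group-D crossings, i.e.\ by a factor $\ge 1-\mathcal O(1)\omega\varepsilon\ge 1/2$, which is exactly where the bound $\omega\sqrt{\varepsilon}/2$ comes from. To repair your proof you must either supply the missing full-coalescence argument (controlling how mergers rearrange the speeds, e.g.\ showing the extreme fronts still approach at a definite fraction of the original rate after partial merging and that essentially no merging occurs before $t\approx 1$), or weaken Step 2 to this two-case structure.
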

\begin{proof}
We  first describe the idea underpinning our argument. We term $j_\ell$ and $j_r$ the 3-shocks that are generated at $t=0$ at the left and the right extrema of the interval $\mathfrak R^3_\ell$, respectively. In {\sc Step 3} below we show that at $t=0$ these two 3-shocks are approaching. We then track the the evolution of $j_\ell$ and $j_r$ on the time interval $]0, 6/5[$  and we point out that there are only two possibilities:
\begin{itemize} 
\item The strength of both $j_\ell$ and $j_r$ remains smaller than $\omega \sqrt{\varepsilon}$. In this case we show in {\sc Step 5} below that $j_\ell$ and $j_r$ keep approaching and they merge by time $t =6/5$. We also show that this implies the creation of a 3-shock with strength at least $\mathcal O(1) \omega $.
\item The strength of either $j_\ell$ or $j_r$ surpasses  $\omega \sqrt{\varepsilon}$ at some time $\bar t \in ]0, 6/5[$: just to fix the ideas, let us assume that it is the strength of $j_\ell$. In {\sc Step 6} below we show that this implies that the strength of the   $j_\ell$ remains bigger than $\omega \sqrt{\varepsilon}/2$ on the whole interval $]\bar t, 6/5]$. 
\end{itemize}
The technical details are organized as follows. \\
\firststep\step{we point out that in the time interval $]0, 6/5[$ the waves of group A generated in $\mathfrak R^3_\ell$ can only interact among themselves and with the waves of group $D$ (see~\eqref{e:D} for the definition of group D)} In other words, they cannot interact with shocks of group A generated in other intervals. 

To see this, we proceed as follows. We recall definition~\eqref{e:regions} and that the shocks of group A are only generated in the intervals $\mathfrak R^3_\ell$, $\mathfrak R^2_\ell$, $\mathfrak R^1_\ell$, 
$\mathfrak R^3_r$, $\mathfrak R^2_r$ and $\mathfrak R^1_r$. The closest interval to $\mathfrak R^3_\ell$ is $\mathfrak R^2_\ell$ and the distance between the right extreme of $\mathfrak R^3_\ell$ and the left extreme of $\mathfrak R^2_\ell$ is 
\begin{equation}
\label{e:separiamo} 
 - q + \mathfrak q - \lambda_2 (\underline U' ) + \lambda_3 (\underline U') \ge 3 - \lambda_2 (\underline U' ) + \lambda_3 (\underline U') \ge 6.
\end{equation}
To establish the last inequality, we used~\eqref{e:stimacizero} and the explicit expression of the eigenvalues, see~\eqref{e:eigenvaluese}. This implies that, if the constant $\varepsilon$ in the statement of Proposition~\ref{p:perturbation} is sufficiently small, then in the time interval $]0, 6/5[$ the 3-shocks generated at $t=0$ in $\mathfrak R^3_\ell$ cannot interact with the 2-shocks generated at $t=0$ in $\mathfrak R^2_\ell$. 

\step{we focus on the time $t=0$ and we introduce some notation}
Let $x_\ell$ and $x_r$ be the right and left extrema of $\mathfrak R^3_\ell$, namely $x_\ell = -\mathfrak q - \lambda_3 ( U_{I})$ and 
$x_r = - \mathfrak q - \lambda_3 (\underline U')$. We recall that by the mesh definition discussed in~\S~\ref{sss:mesh} $x_\ell$ and $x_r$ are both points of discontinuity for $U^\nu_0$ . Next, we define the states $U_\ell^-(0)$, 
$U_\ell^+(0)$, $U_r^-(0)$, $U_r^+(0)$ by setting 
\begin{subequations}
\label{e:definizioneuelle}
\begin{align}
U_\ell^-(0) : = \lim_{x \uparrow x_\ell} U^\nu_0 (x), \qquad 
&U_\ell^+(0) : = \lim_{x \downarrow  x_\ell} U^\nu_0 (x) \\ 
U_r^-(0) : = \lim_{x \uparrow  x_r} U^\nu_0 (x), \qquad 
&U_r^+(0) : = \lim_{x \downarrow  x_r} U^\nu_0 (x) 
\end{align}
\end{subequations}
We denote by $j_\ell$ and $j_r$ the 3-shocks of group $A$ generated at $t=0$ at $x_\ell$ and $x_r$, respectively. We also denote by $\mathrm{speed}_{j_\ell} (0)$ and $\mathrm{speed}_{j_r} (0)$ their speed at $t=0$. \\
\step{we control from below the initial difference in speed of $j_{\ell}$ and $j_{r}$} More precisely, we establish the following estimate: 
\begin{equation}
\label{e:velocitat0}
       \mathrm{speed}_{j_\ell} (0) - \mathrm{speed}_{j_r} (0)
       \geq \frac{11}{12} \, \mathrm{length} (\mathfrak R_\ell^3)=
       \frac{11}{12} \Big( \lambda_3 (U_I) - 
        \lambda_3 (\underline U') \Big).
\end{equation}
To this end, we point out that owing to~\eqref{e:derivata}, 
$$
 | U_\ell^-(0) - U_\ell^+ (0) | \leq \mathcal O(1) \frac{ h_\nu}{ \eta}, 
\qquad 
| U_r^-(0) - U_r^+ (0) | \leq \mathcal O(1)\frac{ h_\nu}{ \eta}. 
$$
The explicit expression~\eqref{e:eigenvaluese}  of $\lambda_3$ implies that
$|\nabla \lambda_3 | \leq \mathcal O(1) \eta$ and hence by using the above inequalities we arrive at
\begin{equation}
\label{e:vicinivicini}
    \left| \mathrm{speed}_{j_\ell} (0) - \lambda_3 \big( U_\ell^-(0) \big) \right|
     \leq \mathcal O(1) h_\nu, 
    \qquad
    \left| \mathrm{speed}_{j_r} (0) - \lambda_3 \big( U_r^+(0) \big) \right|
    \leq \mathcal O(1) h_\nu. 
\end{equation}
Next, we use~\eqref{e:palla},~\eqref{e:vartotpsi},~\eqref{e:veperturbation},~\eqref{e:unuzero} and the equalities $V(x_\ell)= U_I$ and $V(x_r) = \underline U'$
to get 
\begin{equation}
\label{e:sopra}
   | U_\ell^-(0) -  U_I |
   \leq \mathcal O(1) \left(
\varepsilon \omega  + r + 
    \frac{h_\nu}{\eta} \right),
    \qquad | U_r^+(0) - \underline U' | \leq 
   \mathcal O(1) \big( 
\varepsilon \omega   + r   \big). 
\end{equation}
Exploiting again the equality
$|\nabla \lambda_3 | \leq \mathcal O(1) \eta$, we get that~\eqref{e:sopra} implies   
\begin{equation*}
\begin{split}
    &
    \left| \lambda_3 \big( U_\ell^-(0) \big) -\lambda_3 (U_I)
    \right|
     \leq  \mathcal O(1) 
     \left( \varepsilon \omega\eta    + r \eta + 
  h_\nu    \right) 
    \\ & 
   \left| \lambda_3 \big( U_r^+(0) \big)  - \lambda_3 (\underline U_I)   \right|
   \leq  \mathcal O(1) 
     \left( \varepsilon \omega\eta  + r \eta  \right) . \\
\end{split}
\end{equation*}
By  plugging the above estimate into~\eqref{e:vicinivicini} we 
arrive at 
\begin{equation}
\label{e:allafine}
      \mathrm{speed}_{j_\ell} (0) - \mathrm{speed}_{j_r} (0) \geq 
      \lambda_3 (U_I) - 
        \lambda_3 (\underline U') - \mathcal O(1) 
         \left(\varepsilon \omega \eta   + r \eta + 
 h_\nu    \right) 
\end{equation}
Next, we point out that the equality $|\nabla \lambda_3 | \leq \mathcal O(1) \eta$ implies that
\begin{equation}
\label{e:quantovale}
    \lambda_3 (U_I) - 
        \lambda_3 (\underline U') 
        = \mathcal O(1) \omega \eta,
\end{equation}
because by Lemma~\ref{l:solvecw} the parameter $\tau$ in~\eqref{e:statedef} is of order $\omega$.
We eventually obtain~\eqref{e:velocitat0} by observing that terms in the last parenthesis in~\eqref{e:allafine} are of lower order than $\omega \eta$: this follows by recalling~\eqref{e:c:parametri3},  and the fact that $h_\nu \downarrow 0$ when $\nu \downarrow 0$.  \\
\step{we consider the evolution of the shocks $j_\ell$ and $j_r$ in the time interval $]0, 6/5[$}
Let $U_\ell^+ (t)$ and $U_\ell^-(t)$, $U_r^+ (t)$ and $U_r^-(t)$ be the left and right state at time $t$ of $j_\ell$ and $j_r$, respectively. Note that the above functions are piecewise constant: to define their pointwise values, in the following we choose their right continuous representative. 
One of the following two cases must occur:
\begin{enumerate}
\item we have
\begin{equation}
\label{e:shockpiccoli}
 |U_\ell^+ (t) - U_\ell^-(t) | < \omega \sqrt{\varepsilon}, 
 \quad |U_r^+ (t) - U_r^-(t) | < \omega \sqrt{\varepsilon}
 \quad \text{for every $t \in ]0, 6/5[$}.
\end{equation}
 We handle this case  in {\sc Step 5} below. 
\item There is $\bar t \in ]0, 6/5[$ such that 
\begin{equation}
\label{e:shockgrandi}
\text{either} \quad |U_\ell^+ (\bar t) - U_\ell^-(\bar t) | \ge \omega \sqrt{\varepsilon}
\quad \text{or} \quad
 |U_r^+ (\bar t) - U_r^-(\bar t) | \ge \omega \sqrt{\varepsilon}.
\end{equation}
We handle this case  in {\sc Step 5} below. 
\end{enumerate}
\step{we conclude the proof of the lemma under the assumption that~\eqref{e:shockpiccoli} holds} 

We recall from {\sc Step 1} that in the time interval $t \in ]0, 6/5[$ both $j_\ell$ and $j_r$ can either merge with other 3-shocks of group A or interact with 1-, 2- and 3-shocks of group D~\eqref{e:D}, but they cannot interact with other 1- or 2-shocks of group A. This implies that $U^-_\ell(t)$ and $U^+_r(t)$ can only change  owing to the interaction with some shock of group D: we recall~\eqref{e:shockD} and we conclude that 
\begin{equation}
\label{e:shockpiccoli1}
 |U^-_\ell (t) - U^-_\ell (0)| + 
 |U^+_r (t) - U^+_r (0)| \leq 
 \mathcal O(1) \omega \varepsilon. 
\end{equation} 
Next, we proceed as in {\sc Step 3} and by combining~\eqref{e:shockpiccoli} with~\eqref{e:shockpiccoli1} we conclude that 
\begin{equation}
\label{e:shockpiccoli3}
 \mathrm{speed}_{j_\ell} (t) - \mathrm{speed}_{j_r} (t)
 \geq \frac{5}{6} \, \mathrm{length} (\mathfrak R_\ell^3)=
 \frac{5}{6} \Big( \lambda_3 (U_I) - 
 \lambda_3 (\underline U') \Big)\quad 
 \text{for every $t \in ]0, 6/5[$}
\end{equation}
provided that $\varepsilon$ (and hence $\omega$, owing to~\eqref{e:c:parametri2}) are sufficiently small. In the previous expression, we denote by $\mathrm{speed}_{j_\ell} (t)$
and $\mathrm{speed}_{j_r} (t)$ the speed of $j_\ell$ and $j_r$ at time $t$. 
 Note that~\eqref{e:shockpiccoli3} implies that by the time $t =6/5$ the shocks $j_\ell$ and $j_r$ merge. By construction, this implies that all the 3-shocks of group A generated at $t=0$ in $\mathfrak R^3_\ell$ merge by time $t=6/5$. In the following,  we denote  by A$^3_\ell$ the group of the 3-shocks of group A generated at $t=0$ in $\mathfrak R^3_\ell$. We follow the same argument as in {\sc Step 2} of Lemma~\ref{l:quantitative} and we use the inequality
\[
     \prod_{j \in A\cup D} \big( 1 - \mathcal O(1) \mathcal V_j \big)
     \ge 1 - \mathcal O(1) \sum_{j \in A\cup D} V_j,
\]
which is a consequence of the elementary inequality $(1-x) (1-y) \ge 1 - (x+y)$ if $x, y \ge 0$. We conclude that the total strength of the shocks in A$^3_\ell$ can bounded from below, more precisely by recalling~\eqref{e:shockD} and the analysis in \S~\ref{sss:id:cw} we have 
\[
 \sum_{i \in \mathrm{A}^3_\ell} \V_i (t) 
 \ge \Big( 1 - \mathcal O(1)\sum_{i \in D} \mathcal V_i \Big)
 \sum_{i \in \mathrm{A}^3_\ell} \V_i (t=0) \ge
 \mathcal O(1) \omega. 
\]
We eventually obtain that by time $t=6/5$ the shocks of group A$^3_\ell$ merge into a single shock with strength $\mathcal O(1) \omega$. \\
\step{we conclude the proof of the lemma under the assumption that~\eqref{e:shockgrandi} holds}

First, we point out that~\eqref{e:shockgrandi} implies that at $t= \bar t$ part of the waves of group $A^3_\ell$ have merged into a shock 
of strength $\omega \sqrt{\varepsilon}$. Hence, we are left to prove that this shock ``survives'' with a sufficiently large strength up to time $t=3/2$. To this end, we point out that for $t > \bar t$ this 
shock can merge with other 3-shocks of group A$^3_\ell$ and hence increase its strength. Also, it can interact with other shocks of group $D$: however, by following the same argument as in {\sc Step 2} of Lemma~\ref{l:quantitative} and by recalling~\eqref{e:shockD}
 the strength of the shock is bounded from below by 
$$
  \Big( 1 -  \mathcal O(1)\sum_{i \in D} \mathcal V_i \Big) \omega \sqrt{\varepsilon} 
  \ge \omega \sqrt{\varepsilon} /2, 
$$
provided that $\varepsilon$ is sufficiently small. 
This concludes the proof of Lemma~\ref{l:collapse}. 
\end{proof}
Note that by repeating the above proof we obtain the an analogous of Lemma~\ref{l:collapse} holds for the 3-shocks of group A generated at time $t=0$ in the interval $\mathfrak R^3_r$ and the 1-shocks of group A generated at time $t=0$ in the intervals $\mathfrak R^1_\ell$ and $\mathfrak R^1_r$. In the case of 2-shocks we have a stronger result. 
\begin{lemma}
\label{l:collapse2} Let $\widetilde T$ be the same constant as in~\eqref{e:T}. 
The following conclusions hold true:
 \begin{enumerate}
 \item\label{item:primocollapse2}
 By the time $t = 6/5$, all the 2-shocks of group A generated at time $t=0$ in the interval $\mathfrak R^2_\ell$ merge into a single 2-shock $J^2_\ell$ having strength greater or equal than $\mathcal O(1) \omega$. The same holds for the 2-shocks of group A generated at time $t=0$ in the interval $\mathfrak R^2_r$, let $J^2_r$ be the resulting shock. 
 \item\label{item:secondocollapse2} The 2-shocks $J^2_\ell$ and $J^2_r$ are approaching and they merge by the time 
$
 t = 2 \widetilde T. 
$ 
\end{enumerate}
\end{lemma}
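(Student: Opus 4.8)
The plan is to deduce both assertions from the structural information already obtained for the wave front-tracking approximation $U^\nu$, the new ingredient being the scalar structure of the second component furnished by Lemma~\ref{L:v}; this is exactly what makes the statement stronger than Lemma~\ref{l:collapse}, where the $2$-shocks cannot be replaced by a family of monotone scalar shocks. I only discuss $\mathfrak R^2_\ell$, the analysis of $\mathfrak R^2_r$ being verbatim the same.

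For assertion~\eqref{item:primocollapse2} I would first argue, exactly as in {\sc Step~1} of the proof of Lemma~\ref{l:collapse}, that in the time interval $]0,6/5[$ the group~A $2$-shocks born at $t=0$ in $\mathfrak R^2_\ell$ interact only among themselves and with waves of the group $\mathrm D$ of~\eqref{e:D}: by~\eqref{e:separiamo} the interval $\mathfrak R^2_\ell$ is at distance $\geq 6$ from $\mathfrak R^3_\ell$ and $\mathfrak R^1_\ell$, while the $2$-shocks in $\mathfrak R^2_\ell$ move with speed $\lambda_2=\mathcal O(1)\varepsilon$ (use $|U_0|\leq\mathcal O(1)\varepsilon$ from~\eqref{e:stimacizero} and~\eqref{e:eigenvaluese}) and the $1$- and $3$-shocks involved move with speed $\mathcal O(1)$. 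Next I would record three facts: every $2$-shock of~\eqref{e:cl2} has speed $v^-+v^+$ (Rankine--Hugoniot for the second equation); by~\eqref{e:wavecurves13} a $1$- or $3$-shock preserves the $v$-values, hence (Lemma~\ref{L:ie}) the strength, hence the speed of a $2$-shock it crosses; and no wave of any group $\mathrm C_m$ is a $2$-shock, while the group-B $2$-shocks have total strength $\mathcal O(1)(\rho\zeta_c+\omega\zeta_w)=\mathcal O(\varepsilon\omega)$ by~\eqref{e:gruppob} and~\eqref{e:parameters}. Since by~\eqref{e:VI},~\eqref{e:psi} and~\eqref{e:palla} the $v$-component of $U_0$ is strictly decreasing on $\mathfrak R^2_\ell$, with derivative at most $-\frac12$ and total decrease $v(\underline U')-v(\underline U'')+\mathcal O(\varepsilon\omega)=\omega+\mathcal O(\varepsilon\omega)$ (here I use Lemma~\ref{l:solvecw}, which gives $-s=\omega+\mathcal O(\omega^2)$, and $|\mathfrak R^2_\ell|=\lambda_2(\underline U')-\lambda_2(\underline U'')=\mathcal O(\omega)$), the group-A $2$-shocks born in $\mathfrak R^2_\ell$ are the steps of a decreasing staircase; their speeds $v^-+v^+$ are therefore strictly ordered in space (leftmost fastest) and, by the previous remarks, are frozen up to $\mathcal O(\varepsilon\omega)$ until they merge. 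A train of fronts with such speeds coalesces into a single front, and the coalescence is complete no later than the time at which the extreme characteristics of the scalar law~\eqref{e:v} meet, namely $\bigl(1+\mathcal O(\varepsilon)\bigr)^{-1}<1<6/5$; each merger keeps the sum of the strengths and no strength can be lost, because the analysis in \S~\ref{ss:wft:id}--\S~\ref{Ss:qualitativeanalysis} shows that no $2$-rarefaction is ever produced. Hence by $t=6/5$ there is a single $2$-shock $J^2_\ell$ issued from $\mathfrak R^2_\ell$, with strength at least $\omega+\mathcal O(\varepsilon\omega)\geq\omega/2=\mathcal O(1)\omega$; the same argument on $\mathfrak R^2_r$ yields $J^2_r$ with strength $\geq\mathcal O(1)\omega$.

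For assertion~\eqref{item:secondocollapse2} I would exploit that, for $t\geq 6/5$, the only $2$-shocks present are $J^2_\ell$, $J^2_r$ and group-B $2$-shocks of total strength $\mathcal O(\varepsilon\omega)$, all remaining waves being $1$- or $3$-shocks which leave $v$ unchanged. Recalling $v_I=0$, $v_{I\!I}=-\omega$, $v_{I\!I\!I}=-2\omega$ (from~\eqref{e:cosaedelta},~\eqref{e:uduetre} and the parametrisation of $D_2$ fixed after Lemma~\ref{L:v}), and that the $v$-component of $U_0$ differs from that of $V$ by $\mathcal O(\varepsilon\omega)$ in $C^0$ (by~\eqref{e:vartotpsi},~\eqref{e:totvarpert},~\eqref{e:veperturbation}), the states bordering $J^2_\ell$ have $v$-components $v_I+\mathcal O(\varepsilon\omega)$ and $v_{I\!I}+\mathcal O(\varepsilon\omega)$, and those bordering $J^2_r$ have $v$-components $v_{I\!I}+\mathcal O(\varepsilon\omega)$ and $v_{I\!I\!I}+\mathcal O(\varepsilon\omega)$, for every $t\geq 6/5$. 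Consequently
\[
 \mathrm{speed}_{J^2_\ell}=v_I+v_{I\!I}+\mathcal O(\varepsilon\omega)=-\omega+\mathcal O(\varepsilon\omega),\qquad
 \mathrm{speed}_{J^2_r}=v_{I\!I}+v_{I\!I\!I}+\mathcal O(\varepsilon\omega)=-3\omega+\mathcal O(\varepsilon\omega),
\]
so $\mathrm{speed}_{J^2_\ell}-\mathrm{speed}_{J^2_r}=2\omega+\mathcal O(\varepsilon\omega)>0$ for $\varepsilon$ small; since $J^2_\ell$ stays to the left of $J^2_r$ they are approaching, and neither can vanish before they meet (no $2$-rarefactions). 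At $t=6/5$ the two shocks lie within $\mathcal O(\omega)$ of $-q$ and $q$ respectively (they formed near $(1,-q)$ and $(1,q)$ and travelled with speed $\mathcal O(\omega)$), so their distance $2q+\mathcal O(\omega)$ decreases at rate $(2-\mathcal O(\varepsilon))\omega$ and they merge at a time
\[
 \leq \frac{6}{5}+\widetilde T\bigl(1+\mathcal O(\varepsilon)\bigr)<2\widetilde T,
\]
the last inequality holding because $\widetilde T=2q/(v_I-v_{I\!I\!I})=q/\omega=20/\varepsilon^3\to+\infty$ as $\varepsilon\to 0^+$ (cf.~\eqref{e:c:parametriT}). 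This proves both assertions.

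The step I expect to be delicate is the coalescence claim in~\eqref{item:primocollapse2}: one must make sure that the focusing of the compression wave into a single $2$-shock is not destroyed by the countably many small $1$-, $2$- and $3$-waves crossing the region, nor by the $W^{1,\infty}$-small perturbation of the datum. This is exactly where Lemma~\ref{L:v} enters: the second component solves a convex scalar conservation law, so its $2$-shocks have spatially monotone speeds $v^-+v^+$ that are unaffected by interactions with $1$- and $3$-waves, and---as in the elementary scalar theory---a decreasing train of such shocks can only merge, and does so within unit time.
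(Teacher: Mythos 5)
Your proof is correct and follows essentially the same route as the paper's: part (i) rests on the 2-shock speed formula $v^-+v^+$, its invariance under interactions with 1- and 3-shocks, and the slope $\approx -1/2$ of the $v$-profile on $\mathfrak R^2_\ell$, forcing consecutive 2-fronts to merge by $t=6/5$, while part (ii) uses the group-B bound~\eqref{e:shockB} to freeze the speeds of $J^2_\ell$ and $J^2_r$ up to $\mathcal O(1)\varepsilon\omega$ and then compares the closing rate $\approx v_I-v_{I\!I\!I}$ with the gap $\approx 2q$, exactly as in the paper. The only difference is presentational: the paper tracks consecutive pairs $j_i,\, j_{i+1}$ and verifies by a short case analysis that 2-2 mergers can only increase their rate of approach, whereas you invoke the global ``decreasing staircase''/scalar-focusing picture --- the same argument with slightly more compressed bookkeeping.
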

\begin{proof}
The proof of \ref{item:primocollapse2}) is organized in two steps. We only discuss the 2-shocks of group A generated at time $t=0$ in the interval $\mathfrak R^2_\ell$, the argument for the 2-shocks generated in $\mathfrak R^2_r$ is completely analogous. 

\firststep\step{we discuss the situation at time $t=0$} We recall that, owing to Lemma~\ref{L:v}, the speed of a 2-shock $j$ between $U^-$ and $U^+$ is 
\begin{equation}
\label{e:speedgei}
 \mathrm{speed}_j = {v^- + v^+}. 
\end{equation}
Next, we fix $x_i^\nu$, $x^\nu_{i+1} \in \mathfrak R^2_\ell$. We denote by $j_i$ and $j_{i+i}$ the 2-shocks generated at $t=0$ at the points $x=x_i^\nu$ and $x^\nu_{i+1}$, respectively, and by $\mathrm{speed}_{j_i} (0)$ their speed at $t=0$. 
Let $v'_{U_{0}}$, $v'_{V}$ be the first derivative of the second components of $U_0$ and $V$, respectively. By combining~\eqref{e:VI},~\eqref{e:psi},~\eqref{e:palla} and~\eqref{e:veperturbation} we have
$$
 \left|v_{U_{0}}' (x) + \frac{1}{2}\right| =\left|v_{U_{0}}' (x) -v_{V}'\right| \leq \mathcal O(1) \zeta_w + \mathcal O(1)r , 
 \quad \text{for every $x \in \mathfrak R^2_\ell$}
$$
By using the relations $\zeta_w = \varepsilon$,~$r < \varepsilon^2$,
~\eqref{e:meshsize} and~\eqref{e:speedgei}, we get that, when $\varepsilon$ is small enough, the last inequality brings us to 
\begin{equation}
\label{e:siavvicinano}
\begin{split} 
 \mathrm{speed}_{j_i} (0) - \mathrm{speed}_{j_{i+1}} (0)
 & \ge  \left(\frac{1}{2}- \mathcal O(1)
 \varepsilon \right) (x_{i-1}^\nu - x^\nu_{i+1})
 \ge  \left(\frac{1}{2}- \mathcal O(1)
 \varepsilon \right) 2 (1-\varepsilon) h_\nu \\ &
  \ge 
 \frac{5}{6} (x_{i}^\nu - x^\nu_{i+1}). 
 \end{split}
\end{equation}

\step{we show that the shocks $j_i$ and $j_{i+1}$ merge in the time interval $]0, 6/5[$} By the arbitrariness of $x^\nu_i$ and $x^\nu_{i+1}$ this establishes~
\ref{item:primocollapse2}).

We recall that the speed of a 2-shock does not change at the interaction with a 1- or a 3-shock. Hence, the speed of $j_i$ and $j_{i+1}$ can only change when they merge with a 2-shock. Three cases can occur:
\begin{itemize} 
\item the shocks $j_i $ and $j_{i+1} $ merge: this proves the claim of the present step.
\item $j_i $ merges with a 2-shock $\ell$ on the left of $j_i$: this implies that the speed of $j_i$ increases.
\item $j_{i+1} $ merges with a 2-shock $\ell$ on the right of $j_{i+1}$: this implies that  the speed of $j_{i+1}$ decreases.
\end{itemize}
If only the last two cases occur, by recalling~\eqref{e:siavvicinano} we conclude that 
$$
 \mathrm{speed}_{j_i} (t) - \mathrm{speed}_{j_{i+1}} (t)
 \ge \mathrm{speed}_{j_i} (0) - \mathrm{speed}_{j_{i+1}} (0)
 >\frac{5}{6} (x_i^\nu - x^\nu_{i+1}) \quad \text{for every $t \in ]0, 6/5[$.}
$$ 
This implies that by the time $6/5$ the shocks $j_i$ and $j_{i+1}$ merge, and hence concludes the proof of \ref{item:primocollapse2}).

\step{We are now left with establishing \ref{item:secondocollapse2}), namely 
proving that the shocks $J^2_\ell$ and $J^2_r$ (defined as in the statement of Lemma~\ref{l:collapse2}) merge by the time $2\widetilde T$} To this end, we recall the explicit expression of $\mathfrak R^2_\ell$ and $\mathfrak R^2_r$~\eqref{e:regions}:
\begin{align*}
 &\mathfrak R^2_\ell: = ] -q - \lambda_2 (\underline U'),
 -q - \lambda_2 (\underline U'') [,
 &&
 \mathfrak R^3_r: =  ] q - \lambda_2 (\underline U^\ast),
 q - \lambda_2 (\underline U^{\ast \ast}) [.
\end{align*}
We also introduce the following notation:  we term
\begin{itemize}
\item $v^-_\ell (t)$ the  second component of the left state of the 2-shock created at $t=0$ at the left extreme of $\mathfrak R^2_\ell$,
\item $v^+_\ell (t)$ the second component of the  right state of the 2-shock created at $t=0$ at the right extreme of $\mathfrak R^2_r$,
\item$\mathrm{speed}_{ \ell}(t)$ the speed of the 2-shock arising at $t=0$ at the left extreme of $\mathfrak R^2_\ell$.
\end{itemize}
The functions $v^-_r (t)$, $v^+_r (t)$ and $\mathrm{speed}_{r}(t)$ are similarly defined by considering $\mathfrak R^2_r$.
Note that $v^-_\ell (t)$ and $v^+_\ell(t)$ are the left, right state and speed of $J^2_{r}$ for $t > 6/5$, because by~\eqref{item:primocollapse2} all the  2-shocks generated at $t=0$ in $\mathfrak R^2_\ell$ merge by the time $t=6/5$. By using an analogous argument we prove that $v^-_r (t)$, $v^+_r(t)$ and $\mathrm{speed}_{r}(t)$ are the left state, the right state and the speed of $J^2_r$, respectively.

By combining~\eqref{e:VI},~\eqref{e:palla},~\eqref{e:vartotpsi} and~\eqref{e:veperturbation} with~\eqref{e:c:parametri5} and~\eqref{e:unuzero}
we infer that
\begin{equation}
\label{e:atzero}
 \Big| \left\{ [ v^-_\ell(0) + v^+_\ell(0)] - [v^-_r(0) + v^+_r(0)] \right\}-
 [v_{I} - v_{I\!I\!I}] \Big| 
 \leq \mathcal O(1) (\omega \varepsilon +h_{\nu}). 
\end{equation}
Next, we point out that $v^-_\ell$ $v^+_\ell$ $v^-_r$ $v^+_r$ can only vary with respect to $t$ owing to the interactions with 2-shocks of group B. Owing to~\eqref{e:shockB}, this implies that 
\begin{equation}
\label{e:accerrsor}
 |v^-_\ell (t) - v^-_\ell(0) | +
 |v^+_\ell (t) - v^+_\ell(0) | +
 |v^-_r (t) - v^-_r (0) | +
 |v^+_r (t) - v^+_r (0) | \leq \mathcal O(1) \omega \varepsilon. 
\end{equation}By combining~\eqref{e:atzero} and~\eqref{e:accerrsor}
we infer 
\begin{equation}
\label{e:atmaggiore} 
\begin{split}
 \mathrm{speed}_{ \ell}(t) - 
 \mathrm{speed}_{r}(t) 
 &\geq 
 [ v^-_\ell(t) + v^+_\ell(t)] - [v^-_r(t) + v^+_r(t)]
 \\&\geq
 [v_{I} - v_{I\!I\!I}] -\mathcal O(1) (\omega \varepsilon+h_{\nu}) .
 \end{split}
\end{equation}
By using~\eqref{e:atmaggiore} and the definitions~\eqref{e:T},~\eqref{e:regions} of $\widetilde T$ and $\mathfrak R^2_r$, we realize that the shocks $J^2_\ell$ and $J^2_r $ merge by time
\begin{align*}
t \leq  \frac{\left[q - \lambda_2 (\underline U^{\ast \ast}) \right] -\left[-q - \lambda_2 (\underline U')\right]}{\sup_t [ \mathrm{speed}_{\ell}(t) - 
 \mathrm{speed}_{r}(t)]}
& \leq
\frac{ 2q+\mathcal O(1) \omega}{v_{I}-v_{I\!I\!I}-\mathcal O(1)(\varepsilon\omega+h_{\nu})}
=
\frac{ 2q+\mathcal O(1) \omega}{{2q}/{\widetilde T}-\mathcal O(1)(\varepsilon\omega+h_{\nu})}
\\&=
\frac{ 1+ \mathcal O(1) \omega}{1-\mathcal O(1) (\varepsilon+\widetilde T h_{\nu})}\cdot \widetilde T .
\end{align*}
To get the last equality we have used the equalities $ \widetilde T=\mathcal O(1)\omega^{-1}$ and $q=20$. Since $h_\nu \to 0^+$, this implies that, if $\omega = \varepsilon^3$ is sufficiently small, then $J^2_\ell$ and $J^2_r$ merge by the time $t = 2 \widetilde T$. This concludes the proof of Lemma~\ref{l:collapse2}.   
\end{proof}
\subsubsection{Shock formation: wave pattern generation}
\label{sss:wpgeneration}
By relying on the analysis at the previous paragraph, at $t=6/5$ the wave-front tracking approximation $U^\nu (t, \cdot)$ contains at least six ``big shocks''. Going from the left to the right, i.e.~as $x$ increases, we encounter: a 3-shock with strength at least $ \omega \sqrt \varepsilon /2$ (see Lemma~\ref{l:collapse}), a 2-shock with strength greater or equal than $\mathcal O(1) \omega$, a 1-shock with strength greater or equal than $\omega \sqrt \varepsilon /2$, and then again 3-shock with strength at least $ \omega \sqrt \varepsilon /2$, a 2-shock with strength greater or equal than $\mathcal O(1) \omega$, a 1-shock with strength greater or equal than $\omega \sqrt \varepsilon /2$. Note that the two 2-shocks are approaching and they meet by time $t = 2 \widetilde T$. Also, the six big shocks do not interact on the time interval $]0, 6/5[$ because the generation regions $\mathfrak R^3_\ell$, $\mathfrak R^2_\ell$, $\mathfrak R^1_\ell$, $\mathfrak R^3_r$, $\mathfrak R^2_r$, $\mathfrak R^1_r$ are sufficiently separated, see~\eqref{e:qs} and~\eqref{e:regions}. Besides those six ``big shocks'' there are in general other waves, which however are all shocks by the analysis in~\S~\ref{Ss:qualitativeanalysis}. 
\subsection{Conclusion of the proof}
\label{ss:conclusion}
In this paragraph we conclude the proof of Proposition~\ref{p:perturbation}. In~\S~\ref{sss:nonphysical} we take into account the 
 presence of non-physical waves in the wave-front tracking approximation. In~\S~\ref{sss:boundfrombelow} we establish a bound from below on the number of shocks in the wave-front tracking approximation. Finally, in~\S~\ref{sss:theend} we complete the proof of Proposition~\ref{p:perturbation}. 
\subsubsection{Non-physical waves}
\label{sss:nonphysical}
In this paragraph we take into account the presence of non-physical waves. We firstly recall some facts about the \emph{simplified Riemann solver} and we refer to~\cite[\S7.2]{Bre} for a complete discussion. 

First, one chooses a threshold parameter $\mu_\nu>0$. We discuss the choice of $\mu_\nu$ later in this paragraph, however we point out that $\mu_\nu \to 0^+$ as $\nu \to 0^+$. The \emph{accurate Riemann solver} is used to solve the interaction of a wave $\alpha$ of strength $\V_{\alpha}$ with a wave $\beta$ of strength $\V_{\beta}$ in the wave front-tracking approximation if the product of the strengths of the incoming waves satisfies 
\begin{equation}
\label{e:nonphysical}
 \V_{\alpha} \cdot \V_{\beta} \ge \mu_\nu. 
\end{equation}
If the above condition is violated, we use the \emph{simplified Riemann solver}, which is defined at~\cite[p.131]{Bre} and involves the introduction of so-called \emph{non-physical waves}. 
Non-physical waves travel at a speed faster than any other wave and the simplified Riemann solver is defined in such a way that their interaction with the other waves has a minimal effect. 
To simplify the exposition, here we do not recall all the technical details and we only discuss the properties of the simplified Riemann solver for the Baiti-Jenssen system~\eqref{e:pertSyst} that we need in the following. These properties are either a direct consequences of the definition of simplified Riemann solver or can be straightforwardly recovered by combining the definition with the features of the Baiti-Jenssen system discussed in \S~\ref{SS:bjcon}. 
\begin{enumerate}
\item If the incoming waves are a 1-shock and a 3-shock, then the simplified Riemann solver coincides with the accurate Riemann solver. 
\item\label{item:2nonphyswaves} If we use the simplified Riemann solver to solve the interaction between a 2-shock and a 1-shock (respectively a 3-shock), then the outgoing waves are a 2-shock, a 1-shock (respectively a 3-shock) and a non-physical wave. The value of the $v$ component is constant across the non-physical wave. 
\item If the incoming waves are both 1-shocks then the simplified Riemann solver coincides with the accurate Riemann solver. The same happens if the incoming waves are both 3-shocks. 
\item\label{item:4nonphyswaves} If the incoming waves are both 2-shocks, then the speed of the outgoing shock is the same in the simplified and in the accurate Riemann solver. Also, the value of the $v$ component is constant across the outgoing non-physical wave. 
\item\label{item:5nonphyswaves} By combing all the above features we conclude that the strength of the $v$ component is \emph{always} constant across non-physical waves. 
\item\label{item:6nonphyswaves} If a non-physical wave interact with a 2-shock, then the speed of the 2-shock does not change. 
\item The strength of each non-physical wave is at most $\mu_\nu$. Also, owing to the analysis in~\cite[p.142]{Bre} we can choose $\mu_\nu$ in such a way that the total strength of non-physical waves satisfies 
\begin{equation}
\label{e:totalstrengthnp}
 \text{total strength non physical waves $\leq \nu$},
\end{equation}
where $\nu$ is our approximation parameter. Owing again to the 
analysis in~\cite[p.142]{Bre}, this is consistent with the requirement that 
$\mu_\nu \to 0^+$ as $\nu \to 0^+$. 
\end{enumerate}
We now discuss how the presence of the non-physical waves affect the analysis at the previous paragraphs. First, we point out that it does not affect at all the discussion on the initial datum in~\S~\ref{ss:wft:id} because by definition the simplified Riemann solver is only used at time $t>0$. Next, we point out that the use of the simplified Riemann solver forces the total number of waves to be finite. In particular, there actually are fewer waves of groups C$_1,\dots,$ C$_m$ than those considered in~\S~\ref{Ss:qualitativeanalysis}. Lemma~\ref{l:quantitative} does not change if we take into account the presence of non-physical waves, provided that we say that if a group C$_m$ is empty, then the the total strength of its waves is $0$. The reason why Lemma~\ref{l:quantitative} does not change is because the proof is based on interaction estimates on the strength of waves and by definition the interaction with a non-physical wave does not change the strength of a shock. Also Lemma~\ref{l:collapse} does not change: indeed, the proof is based on the quantitative estimates given by Lemma~\ref{l:quantitative}, which are still valid. The further perturbation provided by the non-physical waves is arbitrarily small owing to~\eqref{e:totalstrengthnp} and hence does not affect the proof. Finally, Lemma~\ref{l:collapse2} does not change because the proof is based on estimates that, as a matter of fact, involve only the \emph{second} component (i.e., the component $v$) of the wave front- tracking approximation. Owing to properties \ref{item:2nonphyswaves}), \ref{item:4nonphyswaves}), \ref{item:5nonphyswaves}) and \ref{item:6nonphyswaves}) of the non-physical waves of the Baiti-Jenseen systems, non-physical waves have basically no effect on the $v$ component of the wave front-tracking approximation and hence the proof of Lemma~\ref{l:collapse2} is still valid if we take into account non-physical waves. 
\subsubsection{A bound from below on the number of shocks}
\label{sss:boundfrombelow}
This paragraph aims at establishing Lemma~\ref{l:almeno} below. In the statement, $J^2_\ell$ and $J^2_r$ are the same as in the statement of Lemma~\ref{l:collapse2} and we denote by $[\cdot ]$ the entire part. Also, 
$\mu_\nu$ is the threshold to determine whether we use the accurate or the simplified Riemann solver, see~\eqref{e:nonphysical}. 
\begin{lemma}
\label{l:almeno}
Fix a threshold $\theta > \mu_\nu / \omega^2$. In the bounded set ${(t, x) \in ]-\rho, \rho[ \times ]0, 2 \widetilde T}[$, the wave-front tracking approximation $U^\nu$ admits at least
\begin{equation}
\label{e:almeno}
 n_\theta: = \left[ \log_{\omega /2} \left( \frac{ \mathcal O(1)
 \theta}{\sqrt{\varepsilon} } \right) \right]
\end{equation}
shocks $j$ such that strength $\V_{j}$ of $j$ satisfies
\begin{equation}
\label{e:cosalmeno}
\V_{j}\ge \theta. 
\end{equation}
\end{lemma}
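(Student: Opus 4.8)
The plan is to exploit the self-reproducing mechanism described in Lemma~\ref{l:infty}: once the two ``big'' 2-shocks $J^2_\ell$ and $J^2_r$ have formed (by time $t=6/5$, by Lemmas~\ref{l:collapse} and~\ref{l:collapse2}) and are approaching, a 1-shock bounces against the left 2-shock, becomes a 3-shock after interacting with the left 2-shock, travels to the right 2-shock, becomes a 1-shock again, and so on, an unbounded number of times before $J^2_\ell$ and $J^2_r$ merge at time $t\le 2\widetilde T$. Each such bounce is a 1-2 or 2-3 interaction, so Lemma~\ref{L:ie} applies: all outgoing waves are shocks, and most importantly the strength of the bounced 1- or 3-shock after the interaction is, by the right inequality in~\eqref{e:noraref}, at least $\tfrac1{100}$ times the product of (the strength of the incoming bouncing shock) and (the strength of the 2-shock it hits). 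Since the 2-shocks have strength $\mathcal O(1)\omega$ by Lemma~\ref{l:collapse2}\ref{item:primocollapse2}), after $k$ bounces the bounced shock still has strength at least $\mathcal O(1)\,(\omega/2)^{k}\cdot\sqrt\varepsilon\,\omega/2$ — here the initial factor $\sqrt\varepsilon\,\omega/2$ comes from Lemma~\ref{l:collapse} (the strength of the first 1-shock of group A generated in $\mathfrak R^1_\ell$, which is at least $\omega\sqrt\varepsilon/2$). Thus the bounced shock at bounce number $k$ has strength $\ge \theta$ as long as $(\omega/2)^{k}\ge \mathcal O(1)\,\theta/\sqrt\varepsilon$, i.e.\ as long as $k\le \log_{\omega/2}\!\big(\mathcal O(1)\theta/\sqrt\varepsilon\big)=n_\theta$. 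This gives $n_\theta$ distinct shocks of strength $\ge\theta$.

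The steps, in order, are: (i) recall from~\S\ref{sss:wpgeneration} that by $t=6/5$ the approximation $U^\nu$ contains the six big shocks, in particular the left and right big 2-shocks $J^2_\ell, J^2_r$ (strength $\mathcal O(1)\omega$, approaching, merging by $t\le 2\widetilde T$ by Lemma~\ref{l:collapse2}) and a big 1-shock of strength $\ge\omega\sqrt\varepsilon/2$ between them (Lemma~\ref{l:collapse}); (ii) run the bouncing argument of Lemma~\ref{l:infty} between $J^2_\ell$ and $J^2_r$: at each step invoke Lemma~\ref{L:ie} to see that the bounce produces three outgoing shocks with the 2-shock strength unchanged (the equality $\mathcal V_j'=\mathcal V_j$ in Lemma~\ref{L:ie}) and the new bounced 1- or 3-shock of strength $\ge \tfrac1{100}\mathcal V_{\text{bounce}}\cdot\mathcal V_{2\text{-shock}}$; (iii) between two consecutive bounces the bounced shock may also interact with waves of group $D$, but by~\eqref{e:shockD} their total strength is $\le 2K\omega\varepsilon$, so by the interaction estimate~\eqref{e:formulabressan} (exactly as in {\sc Step 2} of Lemma~\ref{l:quantitative}) the bounced shock loses at most a factor $1-\mathcal O(1)\sum_{j\in D}\mathcal V_j \ge 1-\mathcal O(1)\omega\varepsilon$ of its strength, which is absorbed into the $\mathcal O(1)$; (iv) verify that all $n_\theta$ bounces actually occur before $J^2_\ell$ and $J^2_r$ merge, which holds because $\widetilde T>6/5$ and the bounces accumulate at the merge time, and the bounced shocks are genuinely distinct waves of the front-tracking approximation (each is a ``new shock'' in the sense of groups $\mathrm C_m$), all located in $]-\rho,\rho[\times\,]0,2\widetilde T[$; (v) finally, note the constraint $\theta>\mu_\nu/\omega^2$ guarantees that the relevant bounce interactions are resolved with the \emph{accurate} Riemann solver: the product of the incoming strengths at a bounce is $\ge (\omega\sqrt\varepsilon/2)\cdot\mathcal O(1)\omega \ge \mathcal O(1)\omega^2 \ge \mu_\nu$ once the bounced strength is $\ge\theta>\mu_\nu/\omega^2$, hence no non-physical wave is spuriously produced in place of the bounced shock, and the estimate~\eqref{e:noraref} genuinely applies.

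The main obstacle is step~(iv): making sure that the geometric mechanism of Lemma~\ref{l:infty}, which was stated for the \emph{exact} solution with initial datum $W$, survives in the \emph{perturbed, front-tracking} setting — i.e.\ that the bounced shock really does keep hitting alternately $J^2_\ell$ and $J^2_r$ and is not deflected, absorbed, or overtaken before $n_\theta$ bounces are completed. This is controlled by the quantitative bounds of Lemma~\ref{l:quantitative}: the total strength of all perturbation waves (group $D$) is $\mathcal O(1)\omega\varepsilon$, which is much smaller than the strength $\mathcal O(1)\omega$ of the big 2-shocks and than the bounced-shock strengths down to level $\theta$; so the speeds of $J^2_\ell$, $J^2_r$ and of the bounced shock are perturbed only by $\mathcal O(1)\omega\varepsilon$ (using the Lipschitz bound $2$ on the eigenvalues), which is not enough to disrupt the ordering of waves that underlies the bouncing pattern, exactly as in {\sc Step 5} of Lemma~\ref{l:collapse}. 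A secondary technical point to handle carefully is counting: one must check that the $n_\theta$ shocks exhibited are pairwise distinct — they are, because consecutive bounced shocks belong to strictly increasing groups $\mathrm C_m$ (a new bounced shock is created at each 1-2 or 2-3 interaction), and a shock counted at one bounce has, by the next bounce, been superseded by the newly created one while itself moving away toward $J^2_r$ or $J^2_\ell$; alternatively one simply observes that at a fixed intermediate time slightly before the merge, all $n_\theta$ bounced shocks coexist in $U^\nu(t,\cdot)$.
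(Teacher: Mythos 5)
Your proposal is correct and follows essentially the same route as the paper: a reflection argument between the two big 2-shocks $J^2_\ell$ and $J^2_r$, using Lemma~\ref{L:ie} (the lower bound $\tau\ge\tfrac{1}{100}s\sigma$ and $\mathcal V'_j=\mathcal V_j$) so that each bounce creates a new 1- or 3-shock of strength $\mathcal O(1)\omega$ times the incoming one, controlling losses to perturbation waves via the bounds of Lemma~\ref{l:quantitative} and~\eqref{e:formulabressan}, and using $\theta>\mu_\nu/\omega^2$ exactly as the paper does to guarantee the accurate Riemann solver is used at each bounce. The only cosmetic difference is that the paper starts the cascade from the left big 3-shock of strength $\ge\omega\sqrt\varepsilon/4$ and phrases each reflection as the creation of a new wave (rather than the incoming shock ``becoming'' one of the other family), which is also the cleaner way to see that the counted shocks are distinct.
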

\begin{proof}
If there were only the six ``big shocks'' mentioned in~\S~\ref{sss:wpgeneration}, then the wave pattern would be qualitatively like the one represented in Figure~\ref{F:V}.
To understand the impact of the other waves and to establish~\eqref{e:almeno} we track the evolution of the left 3-``big shock'' $J^3_\ell$, which has strength at least $\omega \sqrt{\varepsilon}/4$ when it interacts with the left 2-``big shock'' $J^2_\ell$. We recall that the strength of $J^2_\ell$ is $\mathcal O(1) \omega$ and we use estimate (7.31) in~\cite[p.133]{Bre}: we conclude that after this interaction the strength of $J^3_\ell$ is at least 
$$
 \frac{ \omega \sqrt{\varepsilon}}{4} - \mathcal O(1) \sqrt{\varepsilon} \omega^2 \ge 
 \frac{ \omega \sqrt{\varepsilon}}{8}. 
$$
After this interaction, the shock $J^3_\ell$ moves towards the right 2-``big shock'' $J^2_r$. Before interacting with $J^2_r$, however, $J^3_\ell$ can interact with 1- and 3-shocks and with 2-shocks different than $J^2_\ell$ and $J^2_r$. The interaction with a 3-shock increases the strength of $J^3_\ell$ because the shock merges with $J^3_\ell$. The interaction with a 1-shock does not affect the strength of $J^3_\ell$. We are left to consider the interactions with 2-shocks different than $J^2_\ell$ and $J^2_r$. We recall that 2-shocks are only generated
at $t=0$ and that, owing to Lemma~\ref{l:collapse2}, all the 2-shocks generated at $t=0$ in $\mathfrak R^2_\ell$ and $\mathfrak R^2_r$ have merged by the time $t=3/2$ to generate $J^2_\ell$ and $J^2_r$, respectively. Hence, what we are left to consider are the interactions of $J^3_\ell$ with the 2-shocks that are not generated at $t=0$ in $\mathfrak R^2_\ell \cup \mathfrak R^2_r$. Note that all these 2-shocks belong to group $B$. We recall ~\eqref{e:shockB} and the interaction estimate (7.31) in~\cite[p.133]{Bre} and we infer that after the interaction with all these 2-shocks the strength of 
$J^3_\ell$ is at least 
\begin{equation}
\label{e:riflessione}
 \frac{ \omega \sqrt{\varepsilon}}{{{}8}} - \mathcal O(1) \frac{ \omega \sqrt{\varepsilon}}{{{}8}} 
 \sum_{i_B \in B} \V_{i_B} \ge \frac{ \omega \sqrt{\varepsilon}}{{{}8}} - \mathcal O(1)
 \omega^2 \varepsilon^{3/2} \ge \frac{ \omega \sqrt{\varepsilon}}{{{}16}}.
\end{equation}
Owing to Lemma~\ref{L:ie}, when $J^3_\ell$ interacts with $J^2_r$, then a 1-shock is created: by combining~\eqref{e:riflessione} with the fact that the strength of $J^2_r$ is $\mathcal O(1) \omega$, we get that the strength of this 1-shock is at least $\mathcal O(1) \sqrt{\varepsilon} \omega^2/ {{}16}$. Also, this 1-shock moves towards $J^2_\ell$, but before reaching $J^2_\ell$ may interact with 1-, 2- and 3-shocks. By arguing as before, we infer that when it reaches $J^2_\ell$ its strength is at least $\mathcal O(1) \sqrt{\varepsilon} \omega^2/ {{}32}$. When this 1-shock interacts with 
$J^2_\ell$, a 3-shock with strength at least $\mathcal O(1) \sqrt{\varepsilon} \omega^3/ {{}32}$ is created. We repeat this argument as long as the strength of the reflected 1- or 3-shock $j$ satisfies~\eqref{e:cosalmeno}, namely we can repeat it a number $n_\theta$ of times, where $n_\theta$ satisfies 
$$
 \mathcal O(1) \sqrt{\varepsilon} \omega \left( \frac{\omega}{2} \right)^{n_\theta}
 \ge \theta,
$$
This implies~\eqref{e:almeno}. We are left to justify the fact that we used the accurate and not the simplified Riemann solver. Note that, owing to the inequality $\theta \ge \mu_\nu / \omega^2$ and since the strengths $\V_{J^2_r}$ of $J^{2}_{r}$ and $\V_{J^2_\ell}$ of $J^{2}_{\ell}$ are equal to $\mathcal O(1) \omega$, if~\eqref{e:cosalmeno} holds then 
$$
 \V_{j} \cdot \max \big\{\V_{J^2_r}, \ \V_{J^2_\ell} 
 \big\} \ge \mathcal O(1) \theta \omega \ge 
 \mathcal O(1) \frac{\mu_\nu}{\omega}
 \ge \mu_\nu
$$
provided that $\omega$ is sufficiently small. This implies that we must use the accurate and not the simplified Riemann solver and it concludes the proof of Lemma~\ref{l:almeno}. 
\end{proof}
\subsubsection{The limit solution has infinitely many shocks}
\label{sss:theend}
We are eventually ready to conclude the proof of Proposition~\ref{p:perturbation}. First, we rely on the analysis in~\cite[Chapter 7]{Bre} and we conclude that when $\nu \to 0^+$ the wave front-tracking approximation $U^\nu(t, \cdot)$ converges strongly in $L^1_\loc (\R)$ to a limit function $U(t, \cdot)$ for every $t > 0$. Also, the function $U$ is the admissible solution of the Cauchy problem obtained by coupling~\eqref{e:cl2} with the initial datum $U(0, \cdot)= U_0$. 

We are left to prove that $U$ admits infinitely many shocks in 
$]-\rho, \rho[ \times ]0, 2 \widetilde T[.$ We rely on Lemma~\ref{l:almeno} and on fine properties of the wave front-tracking approximation established in~\cite{BressanLeFloch} (see also~\cite[\S10.3]{Bre} for an introductory exposition). 

More precisely, we firstly point out that the function $n_\theta$ defined as in~\eqref{e:almeno} satisfies 
\begin{equation}
\label{e:limit}
 \lim_{\theta \to 0^+} n_\theta = + \infty
\end{equation}
since $\omega <1$. Next, we refer to the definition of~\emph{maximal $\theta$-shock front} given in~\cite[p.219]{Bre}: loosely speaking, a maximal $\theta$-shock front is a polygonal line made by consecutive shocks of the same family where the strength of each shock is greater or equal than $\theta/2$ and there is at least one shock having strength greater or equal than $\theta$. 

Also, we consider the ``big'' 2-shocks $J^2_\ell$ and $J^2_r$ given by the statement of Lemma~\ref{l:collapse2}. We term $(t^\ast_\nu, x^\ast_\nu)$ their intersection point and we remark that by construction $x^\ast_\nu \in [-2q, 2q]$. 
Note that by looking at the proof of Lemma~\ref{l:almeno} we realize that, if 
$\theta > \mu_\nu /\omega^2$, then there are at least $n_\theta$ shocks with strength bigger or equal than $\theta$ and that cross the part of the plane between $J^2_\ell$ and $J^2_r$, namely they intersect the vertical line $x = x^\ast_\nu$ at some time $t < t^\ast_\nu$. 

We now argue inductively as follows. We fix a threshold $\theta_1>0$ such that 
$n_{\theta_1} \ge 1$, namely there is at least one shock  $j^1_\nu$ such that the strength of $j^1_\nu$ is at a some point greater or equal than $\theta_1$. In particular, $j^1_\nu$ is a maximal  
$\theta_1$-shock front. 

Next, we fix $\theta_2$ such that $n_{\theta_2} - n_{\theta_1/2} >1$: this implies that, for very $\nu$ sufficiently small, $U^\nu$ has at least a shock $j^2_\nu$ satisfying
\begin{equation}
\label{e:traidue}
    \theta_2  \leq \mathrm{strenght} \, j^2_\nu < \frac{\theta_1}{2}.
\end{equation}
By arguing as in~\cite[p. 220]{Bre} we infer that when $\nu \to 0^+$ the shock curves $j^1_\nu$ and $j^2_\nu$ converge uniformly (up to subsequences) to two shocks of the limit function $U$: we term them $j^1_\infty$ and $j^2_\infty$. Also, 
the value $x^\ast_\nu$ converges (up to subsequences) to some limit value $x^\ast$. Note that the limit shocks $j^1_\infty$ and $j^2_\infty$ both intersect the vertical line $x=x^\ast$ and, moreover, the strength of $j^1_\infty$ is greater or equal than $\theta_1$ and the strength of $j^2_\infty$ is comprised between $\theta_2$ and $\theta_1/2$. This implies that $j^1_\infty$ and $j^2_\infty$ are two distinct shock curves and, hence, the limit solution has at least $2$ shocks with strength greater or equal than $\theta_2$. 

Owing to~\eqref{e:limit}, we can iterate the above argument: for every natural number $k$, there is $\theta_k>0$ such that the limit  $U$ has at least $k$ distinct shocks with strength greater or equal 
than $\theta_k$. This implies that $U$ has infinitely many shocks and concludes the proof of Proposition~\ref{p:perturbation}.  

\subsection{Proof of Theorem~\ref{T:main}} 
\label{ss:proofteo}
This paragraph aims at establishing the proof of Theorem~\ref{T:main}. Before entering the 
technical details, we make some preliminary heuristic considerations. To establish Theorem~\ref{T:main} we need to construct a set $\mathfrak B \subseteq \mathcal S(\R)$ that satisfies condition \ref{item:2main}) and \ref{item:3main}) in the statement of the theorem. Proposition~\ref{p:perturbation} states that, if $\widetilde U$ is the same as in~\eqref{e:tildeu}, then  the admissible solution of the Cauchy problem with initial datum $\widetilde U$ develops infinitely many shocks and this behavior is stable with respect to $W^{1 \infty}$-perturbations.  Note, however, that both $\widetilde U$ and its $W^{1 \infty}$-perturbations have discontinuous first order derivatives and hence they do not belong to $\mathcal S(\R)$. To construct $\mathfrak B$, we mollify $\widetilde U$ to obtain a smooth function and we consider $W^{1 \infty}$-perturbations of the mollified function. 

We now provide the technical details: we first introduce the notation. We fix a convolution kernel $\phi$, namely a smooth function 
\begin{equation}
\label{e:phi}
        \phi: \R \to [0, + \infty[, \qquad 
        \int_\R \phi(x) dx =1, \qquad 
        \phi(x) = 0 \; \text{if $|x| \ge 1$}. 
\end{equation}
We fix $\varsigma>0$ and we define the mollified function $\widetilde U_\varsigma: \R \to \R^3$ by setting 
\begin{equation}
\label{e:mollificazione}  
\widetilde U_{\varsigma}(x): = \int_{-1}^1 
\widetilde U(x+\varsigma z)\phi(z)dz,
\end{equation}
where $\widetilde U$ is the same function as in~\eqref{e:tildeu}. Note that $\widetilde U_\varsigma \in \mathcal S(\R)$ since 
\begin{itemize}
\item $\widetilde U_\varsigma$ is compactly supported because so it is $\widetilde U$. 
\item $\widetilde U_\varsigma$ is smooth by the classical properties of convolution. 
\end{itemize}
Theorem~\ref{T:main} is a direct corollary of the 
following result.
\begin{proposition}
\label{p:perturbation2}
There is a sufficiently small constant $\varepsilon >0$ such that the following holds. Assume that $q=20$ and that $\delta, \zeta_w, \zeta_c, \eta, \omega, r$ and $\rho$ are as in the statement of Proposition~\ref{p:perturbation}. Assume furthermore that $\varsigma < \varepsilon^2 \eta \zeta_c$. Let $\widetilde U_\varsigma$ be the same function as in~\eqref{e:mollificazione} and set 
\begin{equation}
\label{E:secondapalla}
 \mathfrak B : = \mathcal S (\R) \cap \Big\{ U_0 \in W^{1 \infty}
 (\R): \; \| U_0 - \widetilde U_{\varsigma} \|_{W^{1 \infty}} < r   \Big\}. 
\end{equation} 
Then condition \ref{item:1main}), \ref{item:2main}) and \ref{item:3main}) in the statement of Theorem~\ref{T:main} are satisfied. 
\end{proposition}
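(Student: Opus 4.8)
The plan is to deduce the three conditions of Theorem~\ref{T:main} by combining the properties of the mollification $\widetilde U_\varsigma$ with Proposition~\ref{p:perturbation}. Condition \ref{item:1main}) is immediate: it only concerns the flux $F_\eta$, and strict hyperbolicity together with genuine nonlinearity of all three characteristic fields was already recorded in~\eqref{e:boundautovalori} and~\eqref{e:gnl} (after reorienting $\vec r_3$), for the chosen value $\eta = \varepsilon^2$ with $\varepsilon$ small. Condition \ref{item:2main}) is also essentially formal: by construction $\widetilde U_\varsigma \in \mathcal S(\R)$, so $\mathfrak B$ contains $\widetilde U_\varsigma$ and is therefore non-empty; and $\mathfrak B$ is the intersection of $\mathcal S(\R)$ with an open ball in the $W^{1\infty}$ topology. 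Here one has to be slightly careful about the topology: $\mathfrak B$ must be open \emph{in $\mathcal S(\R)$}, and since the inclusion $\mathcal S(\R) \hookrightarrow W^{1\infty}(\R)$ is continuous, the preimage of the open $W^{1\infty}$-ball is open in $\mathcal S(\R)$, so $\mathfrak B$ is open in $\mathcal S(\R)$ as required.

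The substantive point is condition \ref{item:3main}). First I would show that $\widetilde U_\varsigma$ is itself a legitimate perturbation of $\widetilde U$ in the sense of~\eqref{e:palla}, by estimating $\|\widetilde U_\varsigma - \widetilde U\|_{W^{1\infty}}$. The $C^0$ part is controlled by $\varsigma$ times the Lipschitz constant of $\widetilde U$, and the derivative part is controlled by $\varsigma$ times the modulus of continuity of $\widetilde U'$ away from its jump set, \emph{plus} a contribution coming from the finitely many points where $\widetilde U'$ jumps. The largest derivative of $\widetilde U$ is $\mathcal O(1)\eta^{-1}$ by~\eqref{e:derivata}, so the mollification error in $W^{1\infty}$ is $\mathcal O(1)\varsigma\eta^{-1}$ away from the jump set; near each of the finitely many breakpoints the convolution smooths a jump of $V'+\Psi'$ of size $\mathcal O(1)\eta^{-1}$ over a window of width $\varsigma$, which again contributes $\mathcal O(1)\eta^{-1}$ to the $C^0$ norm of the derivative \emph{difference}, so it does not help — but this is acceptable because we are not trying to make $\|\widetilde U_\varsigma - \widetilde U\|_{W^{1\infty}}$ small in absolute terms, only smaller than what Proposition~\ref{p:perturbation}'s robustness radius can absorb after halving. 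The choice $\varsigma < \varepsilon^2 \eta \zeta_c$ is made precisely so that $\|\widetilde U_\varsigma - \widetilde U\|_{W^{1\infty}}$ is (much) smaller than $r/2 = \varepsilon^{10}/4$; recalling $\eta = \varepsilon^2$, $\zeta_c = \varepsilon^9$, one checks $\mathcal O(1)\varsigma\eta^{-1} \ll \zeta_c \ll r$, so indeed $\|\widetilde U_\varsigma - \widetilde U\|_{W^{1\infty}} < r/2$.

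Now I would conclude by a triangle-inequality argument. Take any $U_0 \in \mathfrak B$. Then $\|U_0 - \widetilde U\|_{W^{1\infty}} \le \|U_0 - \widetilde U_\varsigma\|_{W^{1\infty}} + \|\widetilde U_\varsigma - \widetilde U\|_{W^{1\infty}} < r + r/2$, which is still of order $r$. Since Proposition~\ref{p:perturbation} actually has a free small parameter — one may run its proof with the radius $r$ replaced by $2r = \varepsilon^{10}$ (all the smallness inequalities in~\S\ref{s:pprop}, in particular~\eqref{e:parameters}, are stated with room to spare and remain valid after multiplying $r$ by a fixed constant) — the conclusion of Proposition~\ref{p:perturbation} applies to $U_0$. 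Hence the admissible solution of the Cauchy problem with initial datum $U_0$ has infinitely many shocks in $]0, 2\widetilde T[ \times ]-2q, 2q[$; taking $K := [0, 2\widetilde T] \times [-2q, 2q]$ as the compact set gives condition \ref{item:3main}). I expect the main obstacle to be the bookkeeping in the first step: verifying that the mollification perturbation genuinely fits inside the robustness radius requires tracking how $\varsigma$, $\eta$, $\zeta_c$ and $r$ compare, and making sure the jump-smoothing near the breakpoints of $\widetilde U$ does not secretly produce a large $W^{1\infty}$ discrepancy — the resolution being that the relevant bound is on $\|\widetilde U_\varsigma - \widetilde U\|_{W^{1\infty}}$, which stays $\mathcal O(1)\varsigma\eta^{-1}$ because the jumps of $\widetilde U'$ are themselves only of size $\mathcal O(1)\eta^{-1}$ and the convolution window has width $\varsigma$.
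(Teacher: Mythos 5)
Your handling of conditions \ref{item:1main}) and \ref{item:2main}) is fine and coincides with the paper's. The gap is in condition \ref{item:3main}): the reduction to Proposition~\ref{p:perturbation} by a triangle inequality does not work, because $\widetilde U_\varsigma$ is \emph{not} a small $W^{1 \infty}$ perturbation of $\widetilde U$. The function $\widetilde U$ is only Lipschitz: its derivative jumps at the endpoints of the compression-wave intervals (jumps of $V'$ of size $\mathcal O(1)\eta^{-1}$ at the 1- and 3-compression regions, of order $1$ at the 2-compression regions) and at the switches of $\Psi'$ between the $\zeta_c$- and $\zeta_w$-slopes. Mollifying over a window of width $\varsigma$ replaces each jump by a smooth transition, and inside that window $\widetilde U_\varsigma'$ differs from $\widetilde U'$ by an amount comparable to the jump itself, not to $\varsigma$ times the jump: shrinking the window reduces the $L^1$ discrepancy of the derivatives, never their sup norm. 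Hence $\| \widetilde U_\varsigma - \widetilde U\|_{W^{1 \infty}}$ is of order $\eta^{-1}=\varepsilon^{-2}$ (in any event at least of order $\varepsilon$), vastly larger than the robustness radius $r=\varepsilon^{10}/2$, no matter how small $\varsigma$ is. Your own intermediate remark concedes the $\mathcal O(1)\eta^{-1}$ contribution near the breakpoints, and it is incompatible with the concluding claim that the norm stays $\mathcal O(1)\varsigma\eta^{-1}<r/2$; note also that the chain $\zeta_c \ll r$ is false, since $\zeta_c=\varepsilon^9 > \varepsilon^{10}/2=r$. Consequently, for $U_0 \in \mathfrak B$ the hypothesis~\eqref{e:palla} of Proposition~\ref{p:perturbation} is not available, even after enlarging the radius by a fixed factor, and the proposed deduction collapses.

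This is precisely why the paper does not argue by inclusion of balls. Its proof re-runs the argument of Proposition~\ref{p:perturbation} with $\widetilde U$ replaced by $\widetilde U_\varsigma$: the only step requiring new input is the analysis of the Riemann problems of the piecewise constant approximation of $U_0$ at $t=0$, and this is supplied by Lemma~\ref{l:sampling}, which verifies the hypotheses of the averaged well-prepared-data Lemma~\ref{l:wp1tris}, with the probability measure $\mathfrak m$ induced by the mollifier $\phi$, for each mesh point. The constraint $\varsigma< \varepsilon^2 \eta \zeta_c$ is used there (in the $C^0$ estimate of $\widetilde U - \widetilde U_\varsigma$ and in bounding $|V^-(z)-U^-|$), not to force $W^{1 \infty}$-closeness to $\widetilde U$. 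Once Lemma~\ref{l:wp1tris} guarantees that at $t=0$ only shocks, with the correct strengths, are generated, the interaction analysis of \S~\ref{Ss:qualitativeanalysis}--\S~\ref{ss:conclusion} applies unchanged and yields condition \ref{item:3main}). To repair your argument you would need to prove such a sampling lemma (or some substitute that controls the Riemann problems of the mollified-plus-perturbed datum directly); the plain triangle inequality in $W^{1 \infty}$ cannot do the job.
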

The proof of Proposition~\ref{p:perturbation2} is divided into two parts: in \S~\ref{sss:pp21} we establish a technical lemma which loosely speaking says that Lemma~\ref{l:wp1tris} applies to the Riemann problems obtained from the piecewise constant approximation of $U_0 \in \mathfrak B$. In \S~\ref{sss:pp22} we conclude the proof of Proposition~\ref{p:perturbation2} and hence of Theorem~\ref{T:main}. 
\subsubsection{Analysis of the Riemann problems arising at initial time}
\label{sss:pp21}
This paragraph is devoted to the proof of Lemma~\ref{l:sampling}.

We assume that the hypotheses of Proposition~\ref{p:perturbation2} are satisfied and we recall that the set $\mathfrak B$ is defined as in~\eqref{E:secondapalla}. 
We also recall the mesh definition in \S~\ref{sss:mesh}: we fix  $\nu>0$, $h_\nu>0$ and  ${x^\nu_{ 0} < x^\nu_{ 1}< \dots <
 x^\nu_{m_\nu}}$ in $]-\rho, \rho[$ in such a way that~\eqref{e:meshsize} holds.   
\begin{lemma}
\label{l:sampling} Assume that the same hypotheses as in the statement of Proposition~\ref{p:perturbation2} hold true. Fix $x_i^\nu \in ]-\rho + \varepsilon, \rho- \varepsilon[$ and set 
\begin{equation}
\label{e:statirie}
   U^- : = \lim_{x \uparrow x^{\nu }_i } 
   U^{\nu}_0(x)=U_{0}(x_{i-1}^{\nu}),  \qquad
U^+ : = \lim_{x \downarrow x^{\nu }_i } U^{\nu}_0(x)=U_{0}(x_{i}^{\nu}) 
\end{equation}
and 
\begin{align*}
& V^{-}(z):=V\left(x_{i-1}^{\nu}+\varsigma z\right),
\end{align*}
where $V$ is the same function as in~\eqref{e:VI}.  
Also, let $\Psi$ be the same function as in~\eqref{e:psi} and consider the functions $\widetilde b,\widetilde \xi_{1},\widetilde \xi_{2},\widetilde \xi_{3}: [-1, 1] \to \R$ which are defined for every $z \in [-1, 1]$ by the equalities  
\begin{equation}
\label{e:itildi}
\begin{split}
&
- \widetilde b(z) \vec r_{1I} - \widetilde b(z) \vec r_{2I} + \widetilde  b(z)\vec r_{3I}=\Psi\left(x_{i}^{\nu}+\varsigma z\right)-\Psi\left(x_{i-1}^{\nu} +\varsigma z  \right)
\\ &
V(x_i^\nu + \varsigma z) = D_3 \left[\widetilde  \xi_{3}(z), D_2 \left[ - \widetilde \xi_{2}(z), D_1 \left[- \widetilde \xi_{1}(z), V^{-}(z)\right]\right]\right] . 
\\ 
\end{split}
\end{equation}
Finally, let $\phi$ be the same function as in~\eqref{e:phi} and 
$\mathfrak m$ the $\Ll^1$-absolutely continuous  measure  defined by setting 
\begin{equation}
\label{e:absolutec}
     \mathfrak m (E) : = \int_E \phi(x) dx \qquad 
     \text{for every $\Ll^1$-measurable set $E$.}  
\end{equation}
Then all the hypotheses of the Lemma~\ref{l:wp1tris} are satisfied provided that $\nu$ is small enough.
\end{lemma}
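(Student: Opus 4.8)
The plan is to check, one at a time, each hypothesis of Lemma~\ref{l:wp1tris} applied with the states $U^{-},U^{+}$, the map $\widetilde V^{-}(z):=V^{-}(z)=V(x_{i-1}^{\nu}+\varsigma z)$, the functions $\widetilde b,\widetilde\xi_{1},\widetilde\xi_{2},\widetilde\xi_{3}$ defined by~\eqref{e:itildi}, and the measure $\mathfrak m$ of~\eqref{e:absolutec}. First I would record the elementary points: $\mathfrak m$ is a Borel probability measure since $\phi\ge 0$ and $\int_{-1}^{1}\phi=1$; $|U_{I}|=\sqrt2\,\delta=\sqrt2\,\varepsilon<1/2$ for $\varepsilon$ small; and, since $x_{i}^{\nu}\in\,]-\rho+\varepsilon,\rho-\varepsilon[$, $\varsigma<\varepsilon$ and $h_{\nu}$ is small, all the points $x_{i-1}^{\nu}+\varsigma z$ and $x_{i}^{\nu}+\varsigma z$ with $z\in[-1,1]$ lie in $]-\rho,\rho[$, so that $V$, $\Psi$ and the identity $\widetilde U=V+\Psi$ of~\eqref{e:tildeu} are available there and, near $x_{i-1}^{\nu},x_{i}^{\nu}$, $\widetilde U_{\varsigma}=V_{\varsigma}+\Psi_{\varsigma}$ where $V_{\varsigma},\Psi_{\varsigma}$ denote the mollifications of $V,\Psi$ in the sense of~\eqref{e:mollificazione}. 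For~\eqref{e:hyp1bis} I would bound $|U^{-}-U_{I}|$ along the chain $U_{0}\to\widetilde U_{\varsigma}\to\widetilde U\to U_{I}$, using $\|U_{0}-\widetilde U_{\varsigma}\|_{W^{1\infty}}<r$, the bound $\mathrm{Lip}\,\widetilde U=\mathcal O(1)\eta^{-1}$ on $]-\rho,\rho[$ (as in~\eqref{e:derivata}, which gives $\|\widetilde U-\widetilde U_{\varsigma}\|_{C^{0}}\le\mathcal O(1)\varsigma\eta^{-1}$), and the bounds $\|V-U_{I}\|_{C^{0}}\le\mathcal O(1)\omega$ and $\|\Psi\|_{C^{0}}\le\mathcal O(1)\varepsilon\omega$ of~\eqref{e:estimateczero},~\eqref{e:vartotpsi}; since $\varsigma<\varepsilon^{2}\eta\zeta_{c}$ and by~\eqref{e:allparameters} this yields $|U^{-}-U_{I}|\le\mathcal O(1)\varepsilon^{3}<\varepsilon$.

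The function $\widetilde b$ is handled by the explicit form of $\Psi$: by~\eqref{e:psi}, $\Psi'=-c(y)\,(\vec r_{1I}+\vec r_{2I}-\vec r_{3I})$ almost everywhere with $c(y)\in\{\zeta_{c},\zeta_{w}\}$, so $\Psi(x_{i}^{\nu}+\varsigma z)-\Psi(x_{i-1}^{\nu}+\varsigma z)$ is always a nonnegative multiple of the fixed nonzero vector $\vec r_{1I}+\vec r_{2I}-\vec r_{3I}$; comparing with the first line of~\eqref{e:itildi}, $\widetilde b(z)=\int_{x_{i-1}^{\nu}+\varsigma z}^{x_{i}^{\nu}+\varsigma z}c(y)\,dy$, whence $0<\zeta_{c}(1-\varepsilon)h_{\nu}\le\widetilde b(z)\le\zeta_{w}h_{\nu}<\varepsilon$ for $\nu$ small; this is the first part of~\eqref{e:constraintetaxitris} and also gives $b/4\ge\frac14\zeta_{c}(1-\varepsilon)h_{\nu}$. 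For the $\widetilde\xi_{i}$ I would use that, for $\nu$ small, the interval $[x_{i-1}^{\nu}+\varsigma z,x_{i}^{\nu}+\varsigma z]$ is shorter than every interval of~\eqref{e:regions}, so it meets at most one interval of $\mathfrak R_{c}$ and one adjacent interval of $\mathfrak R_{w}$; let $L(z)$ be the length of the part lying in the $\mathfrak R_{w}$-interval met (and $L(z)=0$ if none is met). On $\mathfrak R_{c}$-intervals $V$ is constant, so $\widetilde\xi_{i}(z)=0$; on $\mathfrak R^{1}_{\ell},\mathfrak R^{1}_{r},\mathfrak R^{3}_{\ell},\mathfrak R^{3}_{r}$, since $\vec r_{1},\vec r_{3}$ have vanishing second component and the $v$-coordinate is constant there, the curves $D_{1}=R_{1}$ and $D_{3}=R_{3}$ (cf.~\eqref{e:rare13}) are straight lines that compose additively, so exactly one of $\widetilde\xi_{1},\widetilde\xi_{3}$ is nonzero, equal to the length of the compression arc crossed and nonnegative by the orientation prescribed in~\eqref{e:VI}, the others vanishing; on $\mathfrak R^{2}_{\ell},\mathfrak R^{2}_{r}$, one has $V(x_{i}^{\nu}+\varsigma z)=R_{2}[\Delta(z),\widetilde V^{-}(z)]$ with $\Delta(z)\le0$, and matching the second components via Lemma~\ref{L:v} forces $\widetilde\xi_{2}(z)=-\Delta(z)\ge0$ while $\widetilde\xi_{1}(z),\widetilde\xi_{3}(z)$ are of order $\mathcal O(\widetilde\xi_{2}(z)^{2})$.

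In every case $\widetilde\xi_{1}(z)+\widetilde\xi_{2}(z)+\widetilde\xi_{3}(z)\le\mathcal O(1)L(z)\eta^{-1}$ (using $|V'|=\mathcal O(1)\eta^{-1}$ on the $\mathfrak R^{1},\mathfrak R^{3}$-intervals and the better bound $|V'|=\mathcal O(1)$ on the $\mathfrak R^{2}$-intervals), and, crucially, $\widetilde b(z)\ge\zeta_{w}L(z)$ since on the $\mathfrak R_{w}$-part the weight is $\zeta_{w}$; hence $\widetilde\xi_{i}(z)^{2}\le\mathcal O(1)L(z)^{2}\eta^{-2}<\varepsilon\,\zeta_{w}L(z)\le\varepsilon\,\widetilde b(z)$ once $h_{\nu}\le\mathcal O(1)\varepsilon\zeta_{w}\eta^{2}$, which is the remaining part of~\eqref{e:constraintetaxitris}. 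Inequality~\eqref{e:contraintxi} then follows from these two bounds together with $|\widetilde V^{-}(z)-U^{-}|\le\mathcal O(1)(\varsigma\eta^{-1}+\varepsilon\omega+r)\le\mathcal O(1)\varepsilon\omega$ (proved exactly as the bound on $|U^{-}-U_{I}|$, now at $x_{i-1}^{\nu}+\varsigma z$), because $\mathcal O(1)L(z)\eta^{-1}\cdot\varepsilon\omega<\sqrt\varepsilon\,\zeta_{w}L(z)$ by~\eqref{e:allparameters}.

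Finally, for~\eqref{e:hyp2aCompression1bis} I would observe that the data in~\eqref{e:itildi} are chosen precisely so that the left-hand side telescopes against the mollification: by the second line of~\eqref{e:itildi} the integrand equals $V(x_{i}^{\nu}+\varsigma z)-V(x_{i-1}^{\nu}+\varsigma z)$, whose $\mathfrak m$-integral is $V_{\varsigma}(x_{i}^{\nu})-V_{\varsigma}(x_{i-1}^{\nu})$, while by the first line of~\eqref{e:itildi} and~\eqref{e:integrali}, $b\vec r_{1I}+b\vec r_{2I}-b\vec r_{3I}=\Psi_{\varsigma}(x_{i-1}^{\nu})-\Psi_{\varsigma}(x_{i}^{\nu})$; since $\widetilde U_{\varsigma}=V_{\varsigma}+\Psi_{\varsigma}$ near $x_{i-1}^{\nu},x_{i}^{\nu}$, the vector inside the norm in~\eqref{e:hyp2aCompression1bis} equals $(U_{0}-\widetilde U_{\varsigma})(x_{i}^{\nu})-(U_{0}-\widetilde U_{\varsigma})(x_{i-1}^{\nu})$, of norm at most $\|U_{0}-\widetilde U_{\varsigma}\|_{W^{1\infty}}\,(x_{i}^{\nu}-x_{i-1}^{\nu})<r\,h_{\nu}$ by~\eqref{E:secondapalla} and~\eqref{e:meshsize}, which is smaller than $b/4\ge\frac14\zeta_{c}(1-\varepsilon)h_{\nu}$ because $r=\varepsilon^{10}/2$, $\zeta_{c}=\varepsilon^{9}$ and $\varepsilon<1/3$. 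This verifies the last hypothesis and completes the proof. The step I expect to be the main obstacle is the treatment of the $\mathfrak R^{2}_{\ell},\mathfrak R^{2}_{r}$ intervals in the analysis of the $\widetilde\xi_{i}$: one must show that the exact Lax wave-fan decomposition~\eqref{e:itildi} of a short backward $2$-wave arc $R_{2}[\Delta,\cdot]$ with $\Delta<0$ produces \emph{nonnegative} off-family components $\widetilde\xi_{1},\widetilde\xi_{3}$ (of quadratic order in $\widetilde\xi_{2}$), which requires the explicit structure of the Baiti--Jenssen $2$-wave curves recalled in~\S\S~\ref{sss:baj1}--\ref{sss:baj2} and in Lemma~\ref{L:v}; the attendant bookkeeping — in particular that the smallness of $\zeta_{c}$ relative to $\zeta_{w}$ is harmless, since $\widetilde\xi_{i}(z)$ is nonzero only on the portion of the interval inside $\mathfrak R_{w}$, where $\widetilde b(z)$ carries the larger weight — is then routine.
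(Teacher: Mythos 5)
Your proposal is correct and takes essentially the same route as the paper's proof of Lemma~\ref{l:sampling}: the bound $|U^--U_I|<\varepsilon$ via the chain $U_0\to\widetilde U_\varsigma\to\widetilde U\to U_I$; the two-sided bound on $\widetilde b$ read off from the explicit form~\eqref{e:psi} of $\Psi$; the reduction of~\eqref{e:hyp2aCompression1bis}, by integrating~\eqref{e:itildi} against $\mathfrak m$ and using $\widetilde U_\varsigma=V_\varsigma+\Psi_\varsigma$ at the sample points, to the quantity $(U_0-\widetilde U_\varsigma)(x_i^\nu)-(U_0-\widetilde U_\varsigma)(x_{i-1}^\nu)$, controlled by $r\,(x_i^\nu-x_{i-1}^\nu)\le b/4$; and the proof of~\eqref{e:contraintxi} from $\widetilde\xi_1+\widetilde\xi_2+\widetilde\xi_3\le\mathcal O(1)\eta^{-1}\,\Ll^1\bigl([x_{i-1}^\nu+\varsigma z,x_i^\nu+\varsigma z]\cap\mathfrak R_w\bigr)$, $\widetilde b(z)\ge\zeta_w\,\Ll^1\bigl([x_{i-1}^\nu+\varsigma z,x_i^\nu+\varsigma z]\cap\mathfrak R_w\bigr)$ and $|V^-(z)-U^-|\le\mathcal O(1)\varepsilon^2\eta$. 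These are exactly the paper's Steps 1, 2, 4 and 5, with the same use of the parameter hierarchy~\eqref{e:allparameters}.

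The only divergence is your treatment of the second inequality in~\eqref{e:constraintetaxitris} and of the sign of the $\widetilde\xi_i$. For the magnitude the paper avoids your case analysis: it uses only $0\le\widetilde\xi_i(z)\le\mathcal O(1)(x_i^\nu-x_{i-1}^\nu)\eta^{-1}\le\mathcal O(1)h_\nu\eta^{-1}$ together with the crude lower bound $\widetilde b(z)\ge\zeta_c(x_i^\nu-x_{i-1}^\nu)$, so that $\widetilde\xi_i^2<\varepsilon\widetilde b$ holds as soon as $h_\nu\lesssim\varepsilon\zeta_c\eta^2$, which is admissible because $\varepsilon,\eta,\zeta_c$ are fixed before $\nu\to0^+$; your finer bound $\widetilde b\ge\zeta_w L(z)$ is needed (and is indeed used by the paper) only in~\eqref{e:contraintxi}. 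As for the point you flag as the main obstacle and defer — the nonnegativity of the off-family components $\widetilde\xi_1,\widetilde\xi_3$ coming from the decomposition of a short $2$-compression arc on $\mathfrak R^2_\ell\cup\mathfrak R^2_r$ — the paper does not argue it either: it simply writes $0\le\widetilde\xi_i(z)$ in the display quoted above. On the pure $1$- and $3$-intervals nonnegativity is immediate from the straight-line wave curves~\eqref{e:wavecurves13}, as you note; on the $2$-intervals the off-family components of the decomposition of $R_2[\Delta,\cdot]$ with $\Delta<0$ are only $\mathcal O(|\Delta|^3)$ (the $2$-shock and $2$-rarefaction curves agree to second order), and pinning down their sign would indeed require the explicit $2$-wave-curve information of~\cite{CaravennaSpinolo}. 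So relative to the paper's own proof your proposal omits nothing; it is merely more explicit about a claim the paper asserts without comment, and that claim remains the one unverified sub-step in your write-up as a free-standing argument.
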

Observe that~\eqref{e:phi} and~\eqref{e:absolutec} yield that the measure $\mathfrak m$ is concentrated on $[-1, 1]$.
The function $\widetilde b$ is therefore defined for $\mathfrak m$-a.e. $z \in \R$, even if the function $\Psi$ is only defined in $]-\rho, \rho[$.
\begin{proof}
We proceed according to the following steps. \\
\firststep \step{we establish~\eqref{e:hyp1bis}} We first point out that, by combining~\eqref{e:distancefromUno},~\eqref{e:phi} and~\eqref{e:mollificazione}, we can conclude that the following estimate holds for every $x \in \R$: 
$$
   | \widetilde U_\varsigma (x) - U_I | = 
   \left| \widetilde U_\varsigma (x) - U_I \int_\R \phi(z) dz 
   \right| =  \left|\int_\R  [ \widetilde U(x + \varsigma z) - U_I ]
   \phi(z) dz  \right| \leq \mathcal O(1) \varepsilon^3. 
$$ 
This implies that, if $U_0 \in \mathfrak B$, then, since $r < \varepsilon^3$ owing to~\eqref{e:c:parametri3bis}, 
we have
$$
    |U_0 (x) - U_I| \leq | U_0 (x) - \widetilde U_\varsigma(x) |+ 
    | \widetilde U_\varsigma (x) - U_I |\leq r + \mathcal O(1) \varepsilon^3
    \leq \varepsilon,
$$
namely~\eqref{e:hyp1bis} holds true. \\ 
\step{we establish the first inequality in~\eqref{e:constraintetaxitris}} We fix $z \in [-1,1]$ and we first point out that, owing to the explicit expression~\eqref{e:psi} of $\Psi$, we have  
\begin{equation}
\label{e:bdfrombelowb}
    \widetilde b(z) \ge \min \{ \zeta_c, \zeta_w \} \cdot (x_i^\nu - x_{i-1}^\nu  )  =
    \zeta_c \cdot (x_i^\nu - x_{i-1}^\nu  )> 0
\end{equation}
 owing to~\eqref{e:allparameters}.   Also, for every $z \in [-1,1]$, we have that, owing to~\eqref{e:meshsize}, 
$$
\widetilde b (z)   \leq  (x_{i}-x_{i-1})\zeta_{w} \leq 
h_\nu \zeta_w \leq \varepsilon
 $$
provided that $\nu$ is sufficiently small because $h_\nu \to 0^+$ 
when $\nu \to 0^+$. This concludes the proof of the first inequality in~\eqref{e:constraintetaxitris}. \\
\step{we establish the second inequality in~\eqref{e:constraintetaxitris}} We combine the explicit expression~\eqref{e:VI} of $V$ with the inequality $|V'| \leq \mathcal O(1) \eta^{-1}$ (see~\S~\ref{ss:preli}) and we obtain that 
$$
   0 \leq \widetilde \xi_i(z) \leq  \mathcal O(1) (x_{i}^{\nu}-x_{i-1}^{\nu}) \eta^{-1} 
   \leq \mathcal O(1)  \eta^{-1} h_\nu,
   \quad \text{for $i=1, 2, 3$ and $z \in [-1, 1]$}
$$
and this, jointly with~\eqref{e:meshsize} and~\eqref{e:bdfrombelowb}, implies the second inequality in~\eqref{e:constraintetaxitris} provided that $\nu$ is sufficiently small. \\
\step{we establish~\eqref{e:hyp2aCompression1bis}} We first point out that, owing to~\eqref{e:tildeu} and~\eqref{e:itildi},
\begin{equation*}
\begin{split}
    \widetilde U \left(x_{i}^{\nu}+\varsigma z\right)- \widetilde U\left(x_{i-1}^{\nu}+\varsigma z\right) &=
\left[V\left(x_{i}^{\nu}+\varsigma z\right)-V\left(x_{i-1}^{\nu}+\varsigma z\right)\right] +
\left[\Psi\left(x_{i}^{\nu}+\varsigma z\right)-\Psi\left(x_{i-1}^{\nu}+\varsigma z\right)\right] \\
    & = -
    \widetilde b(z) \vec r_{1I} - \widetilde b(z) \vec r_{2I} + \widetilde  b(z)\vec r_{3I}
    \\
    & \quad +  
    D_3 \left[\widetilde  \xi_{3}(z), D_2 \left[ - \widetilde \xi_{2}(z), D_1 \left[- \widetilde \xi_{1}(z), V^{-}(z)\right]\right]\right] -V^{-}(z)  \\
\end{split}
\end{equation*} 
By integrating the above equality with respect to the measure $\mathfrak m$
and by recalling~\eqref{e:integrali} and~\eqref{e:mollificazione}  we conclude that
\begin{equation}
\label{e:mintegration}
\begin{split}
    \widetilde U_\varsigma \left(x_{i}^{\nu}\right)- & 
    \widetilde U_\varsigma\left(x_{i-1}^{\nu}\right) =   
     - b \vec r_{1I} - b \vec r_{2I} +  b\vec r_{3I}
    \\
    & \quad +  \int_\R 
    \left\{D_3 \left[\widetilde  \xi_{3}(z), D_2 \left[ - \widetilde \xi_{2}(z), D_1 \left[- \widetilde \xi_{1}(z), V^{-}(z)\right]\right]\right]
     -V^{-}(z)\right\}d \mathfrak m (z).  
    \\
\end{split}
\end{equation} 
 Next, we recall~\eqref{e:statirie}
 and we infer that
 \begin{equation}
 \label{e:decomposem}
     U^+ - U^-= U_0 (x_{i-1}^\nu) -
      U_0 (x_{i}^\nu)=
      \widetilde U_\varsigma (x_{i-1}^\nu)  - \widetilde U_\varsigma (x_{i}^\nu)
      - \widetilde U_\varsigma (x_{i-1}^\nu)  
+ U_0 (x_{i-1}^\nu) -  U_0 (x_{i}^\nu)       +\widetilde U_\varsigma (x_{i}^\nu). 
 \end{equation}
 Owing to~\eqref{E:secondapalla}, if $U_0 \in \mathfrak B$, then
 $$
     \Big| U_0 (x_{i-1}^\nu)  - \widetilde U_\varsigma (x_{i-1}^\nu)  -
     \big[ U_0 (x_{i}^\nu)
      -\widetilde U_\varsigma (x_{i}^\nu) \big]
      \Big|
      \leq  r   (x_{i}^\nu - x_{i}^{\nu-1}).
 $$
 By recalling~\eqref{e:decomposem}  with~\eqref{E:secondapalla} and~\eqref{e:mintegration} we conclude that, if $U_0 \in \mathfrak B$, then
\begin{equation*}
\begin{split} 
   \left| U^+ - U^- + b \vec r_{1I}   +b \vec r_{2I} -  b\vec r_{3I} 
   \right. & \left.
    -  \int_\R \left\{ 
    D_3 \left[\widetilde  \xi_{3}(z), D_2 \left[ - \widetilde \xi_{2}(z), D_1 \left[- \widetilde \xi_{1}(z), V^{-}(z)\right]\right]\right] -V_{\varsigma}^{-}(z)
    \right\} d \mathfrak m (z)  \right|
    \\
   &\leq  r   (x_{i}^\nu - x_{i}^{\nu-1}) \leq 
   \frac{1 }{4}\zeta_c
    (x_{i}^\nu - x_{i}^{\nu-1}) \leq \frac{ b}{4}. \\ 
\end{split}
\end{equation*}   
To achieve the last two equalities we have used~\eqref{e:allparameters} and~\eqref{e:bdfrombelowb}. This establishes~\eqref{e:hyp2aCompression1bis}.  \\
\step{we establish~\eqref{e:contraintxi}} We first recall that the first derivative of $V$ satisfies $|V'| \leq \mathcal O(1) \eta^{-1}$ and we infer that the same bound holds for $|\tilde U'|$. By recalling~\eqref{e:mollificazione},~\eqref{e:phi} and the inequality $\varsigma < \varepsilon^2 \eta \zeta_c$
we conclude that
\begin{equation*}
\begin{split}
    |\tilde U(x_{i-1}^\nu) - \tilde U_\varsigma(x_{i-1}^\nu) | & =
    \left| \int_{-1}^{1} 
     \big[ U(x_{i-1}^\nu) - U(x_{i-1}^\nu + \varsigma 
     z ) \big] \phi (z) dz \right| \\
     & \leq \int_{-1}^{1} \mathcal O(1) 
     \eta^{-1} \varsigma 
      \phi (z) dz  \leq \mathcal O(1)  \varepsilon^2 \zeta_c.  
\end{split}
\end{equation*}
By using again the inequality
$|V' | \leq \mathcal O(1) \eta^{-1}$, we infer that, for every $U_0 \in \mathfrak B$ and every $z$ such that $|z| \leq 1$
we have 
\begin{equation}
\label{e:vmenoumeno}
\begin{split}
|V^{-}(z)-U^{-}| \leq& 
|V\left(x_{i-1}^{\nu}+\varsigma z\right)-V\left(x_{i-1}^{\nu}\right)| + |V\left(x_{i-1}^{\nu}\right)- 
\widetilde U\left(x_{i-1}^{\nu}\right)|\\
& \quad + | \widetilde 
U \left(x_{i-1}^{\nu}\right)-
\widetilde U_{\varsigma}\left(x_{i-1}^{\nu}\right) |
 +|\widetilde U_{\varsigma}\left(x_{i-1}^{\nu}\right)-U_{0}\left(x_{i-1}^{\nu}\right) |
\\
 \leq &  \mathcal O(1) \eta^{-1} \varsigma |z|+
 \mathcal O(1)  \varepsilon  \zeta_w \eta  +
 \mathcal O(1) \varepsilon^2 \zeta_c + r
 \leq \mathcal O(1) \varepsilon^2\eta.\\
\end{split}
\end{equation}
To establish the last inequality we have used~\eqref{e:allparameters}, \eqref{e:ufferbacco}
and the inequality $\varsigma < \varepsilon^2 \eta \zeta_c$.  Next, we recall the explicit expression~\eqref{e:VI} of $V$ and~\eqref{e:itildi}
and we conclude that   
\begin{equation}
\label{e:xiunoduetre}
0\leq \xi_{1}(z)+\xi_{2}(z)+\xi_{3}(z)\leq  \mathcal O(1)\eta^{-1}
 \Ll^{1}\Big(\left[x_{i-1}^{\nu}+
 \varsigma z, x_{i}^{\nu}+\varsigma z\right] 
\cap \mathfrak R_{w}\Big).
\end{equation}
By using the explicit expression~\eqref{e:psi} of $\Psi$ we infer that, for every $z$ such that $|z|\leq 1$,  
\begin{equation*}
\begin{split}
b(z) &=\zeta_{c}\,\Ll^{1}\Big(
\left[x_{i-1}^{\nu}+\varsigma z, x_{i}^{\nu}+\varsigma z\right]\cap \mathfrak R_{c}\Big)+\zeta_{w}\,\Ll^{1}\Big(\left[x_{i-1}^{\nu}+\varsigma z, x_{i}^{\nu}+\varsigma z\right]\cap \mathfrak R_{w} \Big)
\\ &
\ge \zeta_{w}\,\Ll^{1}
\Big(\left[x_{i-1}^{\nu}+\varsigma z, x_{i}^{\nu}+\varsigma z\right]\cap \mathfrak R_{w} \Big).  \\
\end{split}
\end{equation*}
By combining the above formula with~\eqref{e:vmenoumeno} and~\eqref{e:xiunoduetre} we eventually arrive at~\eqref{e:contraintxi}. This concludes the proof of the lemma. 
\end{proof} 
\subsubsection{Proof of Proposition~\ref{p:perturbation2}: conclusion}
\label{sss:pp22}
In this paragraph we complete the proof of Proposition~\ref{p:perturbation2}, which has Theorem~\ref{T:main} as a direct corollary.

We consider the Baiti-Jenssen system~\eqref{e:cl2} and the set $\mathfrak B$ defined as in~\eqref{E:secondapalla}. We have to show that conditions \ref{item:1main}), \ref{item:2main}) and \ref{item:3main}) in the statement of Theorem~\ref{T:main} are satisfied. Condition \ref{item:1main}) is satisfied owing to the considerations in \S~\ref{sss:baj1}. Condition \ref{item:2main}) is satisfied: indeed 
\begin{itemize}
\item $\mathfrak B $ is nonempty since it contains $U_\varsigma $, because $U_\varsigma \in \mathcal S(\R)$. Also, 
\item $\mathfrak B$ is open in the topology of $\mathcal S(\R)$, which is stronger than the strong $W^{1, \infty}$ topology.
\end{itemize} 
 We are left to show that condition \ref{item:3main}) is also satisfied. The reason why condition condition \ref{item:3main}) is satisfied is because the proof Proposition~\ref{p:perturbation} continues to work if we replace the function $\widetilde U$ with the function $\widetilde U_\varsigma$, provided that $\varsigma < \varepsilon^2 \eta \zeta_c$. To see this, we first fix $U_0 \in \mathfrak B$ and we introduce its wave front-tracking approximation by arguing as in \S~\ref{sss:mesh}. Next, we point out that, owing to Lemma~\ref{l:sampling}, we can apply Lemma~\ref{l:wp1tris}.  This implies that the same conclusions as at the end of \S~\ref{sss:erreelle} and \S~\ref{sss:id:cw} hold true. This in turn implies that all the analysis in \S~\ref{Ss:qualitativeanalysis}-\S~\ref{ss:conclusion} applies. We can infer that Proposition~\ref{p:perturbation} holds true if we replace $\widetilde U$ with $\widetilde U_\varsigma$ and hence we conclude the proof of Proposition~\ref{p:perturbation2} and Theorem~\ref{T:main}.

\begin{thenomenclature} 
\label{notations}

 \nomgroup{A}

  \item [{$\Ll^N$:}]\begingroup the Lebesgue measure on $\R^N$\nomunit{}\nomeqref {1.5}
		\nompageref{5}
  \item [{$\mathcal O(1)$:}]\begingroup any function satisfying $ 0 < c \leq \mathcal O(1) \leq C$ for suitable constants $c,C>0$. The precise value of $C$ and $c$ can vary from line to line\nomunit{}\nomeqref {1.5}
		\nompageref{5}
  \item [{$\mcS(\R)$:}]\begingroup the Schwartz space of rapidly decreasing functions, endowed with the standard topology (see for instance~\cite[p.133]{ReedSimon} for the precise definition)\nomunit{}\nomeqref {1.5}
		\nompageref{5}
  \item [{$\norm{\cdot}_{W^{1 \infty}}$:}]\begingroup the standard norm in the Sobolev space $W^{1 \infty}$\nomunit{}\nomeqref {1.5}
		\nompageref{5}
  \item [{$\TV U$:}]\begingroup the total variation of the function $U: \R \to \R^N$, see~\cite[\S~3.2]{AFP} for the precise definition\nomunit{}\nomeqref {1.5}
		\nompageref{5}
  \item [{$\vec z_1 \cdot \vec z_2:$}]\begingroup the Euclidian scalar product between the vectors $\vec z_1, \; \vec z_2 \in \R^N$\nomunit{}\nomeqref {1.5}
		\nompageref{5}
  \item [{$D_{i}[\sigma,\bar U]$:}]\begingroup the $i$-wave fan curve through $\bar U$ \nomunit{See~\eqref{e:wavefan}}\nomeqref {1.5}
		\nompageref{5}
  \item [{$F'$:}]\begingroup the first derivative of the differentiable  function $F: \R \to \R^N$\nomunit{}\nomeqref {1.5}
		\nompageref{5}
  \item [{$F(x^\pm)$:}]\begingroup the left and right limit of the function $F$ at $x$ (whenever they exist)\nomunit{}\nomeqref {1.5}
		\nompageref{5}
  \item [{$R_i [s, \bar U]$:}]\begingroup the $i$-rarefaction curve through $\bar U$ \nomunit{See~\eqref{e:intcur}}\nomeqref {1.5}
		\nompageref{5}
  \item [{$S_i[s, \bar U]$:}]\begingroup the $i$-shock curve through $\bar U$ \nomunit{See~\S~\ref{ss:wft}}\nomeqref {1.5}
		\nompageref{5}
  \item [{$W^{1,\infty}$:}]\begingroup the space of Lipschitz continuous functions\nomunit{}\nomeqref {1.5}
		\nompageref{5}
  \item [{a.e. $(t, x)$:}]\begingroup for $\Ll^2$-almost every $(t,x)$\nomunit{}\nomeqref {1.5}
		\nompageref{5}
  \item [{a.e. $x$:}]\begingroup for $\Ll^1$-almost every $x$\nomunit{}\nomeqref {1.5}
		\nompageref{5}

 \nomgroup{C}

  \item [{$\delta$:}]\begingroup a strictly positive parameter\nomeqref {1.5}
		\nompageref{5}
  \item [{$\eta$:}]\begingroup the perturbation parameter in the flux function $F_\eta$ \nomunit{See~\eqref{e:pertSyst},~\eqref{e:c:parametri2}}\nomeqref {1.5}
		\nompageref{5}
  \item [{$\lambda_i(U)$:}]\begingroup the $i$-th eigenvalue of the Jacobian matrix $JF_{\eta}$ \nomunit{See~\eqref{e:eigenvaluese}}\nomeqref {1.5}
		\nompageref{5}
  \item [{$\mathfrak q$, $\mathfrak p$:}]\begingroup strictly positive parameters\nomeqref {1.5}
		\nompageref{5}
  \item [{$\mathfrak R_\ell, \dots, \mathfrak R_r$:}]\begingroup open subsets of $\R$ \nomunit{See~\eqref{e:regions}}\nomeqref {1.5}
		\nompageref{5}
  \item [{$\mu_\nu$:}]\begingroup the threshold for using the accurate Riemann solver \nomunit{See~\S~\ref{ss:wft}}\nomeqref {1.5}
		\nompageref{5}
  \item [{$\nu$, $h_{\nu}$:}]\begingroup parameter and mesh size for the wave front-tracking approximation \nomunit{See~\S~\ref{sss:mesh}}\nomeqref {1.5}
		\nompageref{5}
  \item [{$\omega$:}]\begingroup a strictly positive parameter \nomunit{See~\eqref{e:uduetre},~\eqref{e:c:parametri2}}\nomeqref {1.5}
		\nompageref{5}
  \item [{$\Psi$:}]\begingroup the function $\Psi: \R \to \R^3$ \nomunit{See~\S\S~\ref{sss:psi},~\ref{sss:tildeu}}\nomeqref {1.5}
		\nompageref{5}
  \item [{$\rho$:}]\begingroup the strictly positive parameter in~\eqref{e:c:parametrirho} \nomunit{See~\S~\ref{sss:tildeu},~Remark~\ref{rem:qrho}}\nomeqref {1.5}
		\nompageref{5}
  \item [{$\underline U'$, $\underline U''$, $\underline U^\ast$ and $\underline U^{\ast \ast}$:}]\begingroup fixed states in $\R^3$ \nomunit{See~\eqref{e:statedef}}\nomeqref {1.5}
		\nompageref{5}
  \item [{$\V_i$:}]\begingroup the strength of a shock $i$ \nomunit{See Page~\pageref{label:strength}}\nomeqref {1.5}
		\nompageref{5}
  \item [{$\varepsilon$:}]\begingroup a strictly positive, sufficiently small parameter\nomunit{See~Proposition~\eqref{p:perturbation}}
		\nompageref{5}
  \item [{$\vec r_i(U)$:}]\begingroup the $i$-th right eigenvector of the Jacobian matrix $JF_{\eta}$\nomunit{See~\S~\ref{sss:baj2}}\nomeqref {1.5}
		\nompageref{5}
  \item [{$\widetilde T$:}]\begingroup the strictly positive interaction time \nomunit{See~\eqref{e:T},~\eqref{e:c:parametriT}}\nomeqref {1.5}
		\nompageref{5}
  \item [{$\widetilde U$, $\widetilde U_{\varsigma}$:}]\begingroup the function $\widetilde U: \R \to \R^3$ and its mollification \nomunit{See~\S~\ref{sss:tildeu},~\eqref{e:mollificazione},~\eqref{E:secondapalla}}\nomeqref {1.5}
		\nompageref{5}
  \item [{$\zeta_c$:}]\begingroup a strictly positive parameter \nomunit{See~\eqref{e:psi},~\eqref{e:c:parametri3bis},~\eqref{e:c:parametri5}}\nomeqref {1.5}
		\nompageref{5}
  \item [{$\zeta_w$:}]\begingroup a strictly positive parameter \nomunit{See~\eqref{e:psi},~\eqref{e:c:parametribis},~\eqref{e:c:parametri5}}\nomeqref {1.5}
		\nompageref{5}
  \item [{$q$:}]\begingroup a strictly positive parameter that we fix equal to $20$ \nomunit{See Lemma~\ref{l:infty},~Remark~\ref{rem:qrho}}\nomeqref {1.5}
		\nompageref{5}
  \item [{$r$:}]\begingroup a strictly positive parameter \nomunit{See~\eqref{e:c:parametri3bis},~\eqref{e:parameters},~\eqref{e:palla}}\nomeqref {1.5}
		\nompageref{5}
  \item [{$u,w,v$:}]\begingroup the first, second and third component of the vector-valued function $U$\nomunit{See \S~\ref{sss:baj1}}\nomeqref {1.5}
		\nompageref{5}
  \item [{$U^\nu$:}]\begingroup wave front-tracking approximation of the admissible solution $U$ \nomunit{See~\S~\ref{ss:wft}}\nomeqref {1.5}
		\nompageref{5}
  \item [{$U^\nu_0$:}]\begingroup wave front-tracking approximation of the initial datum $U_{0}$ \nomunit{See~\S~\ref{ss:wft}}\nomeqref {1.5}
		\nompageref{5}
  \item [{$V$:}]\begingroup the Lipschitz continuous function $V: \R \to \R^3$ \nomunit{See~\eqref{e:VI}}\nomeqref {1.5}
		\nompageref{5}
  \item [{$W$:}]\begingroup the piecewise constant function $W: \R \to \R^3$ \nomunit{See~\eqref{e:W}}\nomeqref {1.5}
		\nompageref{5}
  \item [{$x^\nu_{i}$:}]\begingroup mesh points for the wave front-tracking approximation \nomunit{See~\eqref{e:meshsize}}\nomeqref {1.5}
		\nompageref{5}

\end{thenomenclature}

\vskip\baselineskip
\paragraph{\bf Ackowledgments}
This work was originated by a question posed by Tai Ping Liu, whom the authors wish to thank. Part of this work was done when the first author was affiliated to the University of Oxford, which she thanks for the stimulating scientific environment. The second author thanks the University of Oxford for supporting her visits, during which part of this work was done. Both authors are members of the Gruppo Nazionale per l'Analisi Matematica, la Probabilit\`a e le loro Applicazioni (GNAMPA) of the Istituto Nazionale di Alta Matematica (INdAM) and are supported by the PRIN national project ``Nonlinear Hyperbolic Partial Differential Equations, Dispersive and Transport Equations: theoretical and applicative
aspects''.

\end{document}